\documentclass[a4paper,twoside,12pt]{amsart}

\usepackage{amssymb}
\usepackage{amsmath}
\usepackage{epsfig}
\usepackage{graphicx}
\usepackage{hyperref}
\usepackage[nooneline]{subfigure}
\usepackage{color}

\usepackage[latin1]{inputenc} 
\usepackage[T1]{fontenc}

  \setlength\hoffset{-1in}
  \setlength\marginparsep{5pt}
  \setlength\marginparwidth{20pt}
  \setlength\marginparpush{5pt}
  \setlength\paperwidth{8.5in}
  \setlength\columnsep{2pc}
  \setlength\oddsidemargin{2.5cm}
  \setlength\evensidemargin{2.5cm}
  \setlength\textwidth\paperwidth
  \addtolength\textwidth{-2\oddsidemargin}


\newtheorem{theorem}{Theorem}[section]
\newtheorem{fthm}[theorem]{Factorization Theorem}
\newtheorem{lemma}[theorem]{Lemma}
\newtheorem{proposition}[theorem]{Proposition}
\newtheorem{cor}[theorem]{Corollary}
\newtheorem{qst}[theorem]{Question}

\theoremstyle{definition}
\newtheorem{definition}[theorem]{Definition}
\newtheorem{example}[theorem]{Example}
\newtheorem{notation}[theorem]{Notation}
\newtheorem{caso}{Case}
\newtheorem{caso0}{Case}
\newtheorem{caso1}{Case}
\newtheorem{vertex1}{First Vertex}
\newtheorem{vertex2}{Second Vertex}
\newtheorem{subcaso}{Sub-Case}
\newtheorem{paso}{Step}



\DeclareMathOperator{\supp}{Supp}
\DeclareMathOperator{\In}{In}
\DeclareMathOperator{\depth}{depth}
\newcommand\balpha{\boldsymbol{\alpha}}
\newcommand\bdx{{\mathbf x}}
\newcommand\bdy{{\mathbf y}}
\newcommand\bn{{\mathbb N}}

\newcommand\bq{{\mathbb Q}}

\newcommand\bk{{\mathbb K}}

\newcommand\bdn{{\mathbf n}}

\newcommand\bdm{{\mathbf m}}

\numberwithin{equation}{section}

\newcommand\res{\operatorname{Res}}

\theoremstyle{remark}
\newtheorem{remark}[theorem]{Remark}

\numberwithin{equation}{section}


\newcommand\enet[1]{\renewcommand\theenumi{#1}
\renewcommand\labelenumi{\theenumi}}

\date{\today}

\title{Quasi-ordinary singularities and Newton trees}

\author[E. Artal]{E. Artal Bartolo}
\address{Departamento de Matem\'aticas-IUMA, Universidad de Zaragoza,
c/ Pedro Cerbuna 12, 50009 Zaragoza, SPAIN}
\email{artal@unizar.es}
\thanks{}

\author{Pi. Cassou-Nogu\`es}
\address{Institut de Math\'ematiques de Bordeaux, Universit\'e Bordeaux I, 
350, Cours de la Lib\'eration, 33405, Talence Cedex 05, FRANCE}
\email{cassou@math.u-bordeaux1.fr}
\thanks{}
\author{I. Luengo}
\address{Dept.\ of Algebra, 
Facultad de Ciencias Matem\'aticas, Universidad Complutense, 28040, Madrid,  SPAIN}
\curraddr{}
\email{iluengo@mat.ucm.es}
\thanks{}

\author[A. Melle]{A. Melle Hern\'andez}
\address{ICMAT (CSIC-UAM-UC3M-UCM) \\
Dept.\ of Algebra, 
Facultad de Ciencias Matem\'aticas, Universidad Complutense, 28040, Madrid,  SPAIN}
\curraddr{}
\email{amelle@mat.ucm.es}
\thanks{}

\thanks{
First and second authors are partially supported by
MTM2010-21740-C02-02; the last three authors are partially
supported by the grant 
MTM2010-21740-C02-01.}

\keywords{Quasi-ordinary singularities, resultant, factorization}


\subjclass[2000]{14B05,32S05,32S10}

\begin{document}
\begin{abstract}
In this paper we study some properties of the class of $\nu$-quasi-ordinary hypersurface singularities. 
They are defined by a very mild condition on its (projected) Newton polygon. We associate with them a Newton tree
and characterize quasi-ordinary hypersurface singularities among  
$\nu$-quasi-ordinary hypersurface singularities in terms of 
their Newton tree.
A formula to compute the discriminant of a quasi-ordinary Weierstrass polynomial in terms 
of the decorations of its Newton tree is given.  This allows to compute the discriminant avoiding the use
of determinants and even for  non Weierstrass prepared polynomials.
This is important for applications like algorithmic resolutions. 
We compare the Newton tree of a quasi-ordinary singularity and those of its curve 
transversal sections. We show that the Newton trees of the transversal sections  
do not give the tree of the quasi-ordinary singularity in general. 
It does if we know that the Newton tree of the quasi-ordinary singularity has only one arrow. 
\end{abstract}

\maketitle

\section*{Introduction}\label{sec-intro}
Let $\bk$ be an algebraically closed field of characteristic zero. Let $f\in \bk[[\bdx]][z]$ be a polynomial
with coefficients  in the formal power series ring $\bk[[\bdx]]:=\bk[[x_1,\cdots, x_d]]$.

If $d=1$, its zero locus $(\{f=0\},0)\subset (\bk^2,0)$ defines  a germ  of plane curve singularity. One of the  
initial ideas to deal with  germs of plane curve singularities is  the use of the Newton algorithm. 
Inside the algorithm, the first step is governed by the Newton polygon of $f$
and, for each compact face of the Newton polygon, one proceeds 
by doing the Newton process, 
and so on. This procedure is codified in a tree called \emph{Newton tree}. Many properties can be codified in the tree, for instance,  
using bi-colored Newton trees one can compute the intersection multiplicity of two plane curves. 
This allows to compute from the Newton tree the Milnor number of a germ since it can be expressed 
in terms of intersection multiplicities. More recently  for a finite codimension ideal $I$ in $\bk[[x_1,x_2]]$
its  multiplicity, its {\L}ojasiewicz exponent, 
and its Hilbert-Samuel multiplicity can be computed from its Newton tree, see~\cite{cnv:12}.   

For $d>1$ the well-known class  of hypersurface singularities  which generalizes the case of curves is the class of 
\emph{quasi-ordinary hypersurface singularities}. 
In that class, one of the key results is a factorization theorem given by Jung-Abhyankar, see \cite{ab:55}. 
One feature of quasi-ordinary singularities is that in some coordinates their Newton polyhedron is a polygonal path, i.e. 
all its compact faces have dimension one. Then we can apply the Newton process as in the case of curves. 
Moreover, after Newton maps, the condition to be quasi-ordinary is preserved. In \cite{aclm:09}, 
we have studied a generalization of quasi-ordinary singularities, called $\nu$-quasi-ordinary 
by H.~Hironaka \cite[Definition 6.1]{hr:74}. 
Roughly speaking  for a $\nu$-quasi-ordinary hypersurface defined by a polynomial $f\in \bk[[\bdx]][z]$  we only require the upper part of the Newton polyhedron of $f$
to be  a polygonal path. We apply the Newton process associated with these faces of dimension~$1$ and we iterate the process 
whenever the upper part of the corresponding Newton polyhedron is a polygonal path. 
As in the case of curves we encode the 
Newton process in a tree, but the tree bears leaves (arrows) and fruits (black boxes). Using this tree, we describe the 
 condition for two 
$\nu$-quasi-ordinary  series $f$ and $g\in \bk[[\bdx]][z]$ to have $z$-resultant which is a monomial times a unit.
In particular, a quasi-ordinary singularity has a Newton tree with only arrows.

The first main result of this article is that a $\nu$-quasi-ordinary  polynomial $f$  is a quasi-ordinary 
if and only if and only if there exists a suitable system of coordinates for $f$ such that
its Newton tree  has only arrow-heads decorated with $(0)$ and $(1)$ and has no black boxes, see Theorem~\ref{main2}. 
This result can also be stated as follows: 
\begin{quote}\em
Let $f$ be a $\nu$-quasi-ordinary polynomial. Then, $f$  is a quasi-ordinary
polynomial if and only if $f$ is reduced, $\nu$-quasi-ordinary
and the succesive transforms by Newton maps are also $\nu$-quasi-ordinary. 
\end{quote}
 This gives a characterization of
quasi-ordinary polynomial which does not depend on the definition of Newton trees, that is, quasi-ordinary
is equivalent to reduced and stably (by Newton maps) $\nu$-quasi-ordinary.
Theorem~\ref{main2} is proved studying how the derivative ${\partial f}/{\partial z}$ separates from~$f$ on the Newton 
tree of the product.  
In the case of curves this is an  algebraic elementary way to recover L\^e-Michel-Weber theorem~\cite{lmw:89}. 
We also give the formula to compute the discriminant of a quasi-ordinary Weierstrass polynomial $f$ in terms 
of the decorations of the Newton tree of $f$, see Proposition \ref{discrimant-tree}. This formula has two
important consequences. The first is that this formula allows to compute the discriminant avoiding the use
of determinants. The second one, if one starts with a non Weierstrass polynomial, the computation
of the Weierstrass decomposition is not needed. 
This is important for applications, for instance  algorithmic resolutions, see \cite{b:09,brj:03,bs:00}. 
In \cite{b:09} the author gives an effective algorithm for desingularization of surfaces using Jung's method. 
His Algorithms~2 and~3 use as  input 
a monic polynomial. If the polynomial is not monic, it  is a difficult task to make effective the Weierstrass preparation theorem. 
Using our method and proceeding as in   \cite{alr:88}
one can  avoid this problem and  adapt Algorithms 2 and 3 without the monic condition.  

The second main result is contained in \S\ref{sec-trans-sect} 
where we compare the Newton tree of a quasi-ordinary singularity and those of its curve 
transversal sections. Examples~\ref{ex-duple} and~\ref{ex-duple1} show that, in general, it is not possible to recover 
the Newton tree of the quasi-ordinary singularity from the Newton trees of the curve transversal sections. 
It does if we know that the Newton tree of the quasi-ordinary singularity has only one arrow, see Theorem~\ref{thm-recover}. 
However the global decorations which appear 
on the Newton tree of the quasi-ordinary singularities are those which appear on the Newton tree 
of the transversal sections. This fact plays a crucial role in the proof of 
the monodromy conjecture for quasi-ordinary singularities \cite{aclm:05}.
Moreover in \cite{GPGV:11} a description
of the motivic Milnor fibre of an irreducible quasi-ordinary polynomial 
is given proving that it is a topological invariant.  In~\cite{gv:12} 
the notion of linear Newton tree for an irreducible quasi-ordinary polynomial is introduced and some 
of its properties are discussed, as normalization or semigroup.

In \S\ref{sec-nuqonewton} we recall the definitions and some properties of  
$\nu$-quasi-ordinary polynomials and Newton maps. The construction of the Newton trees introduced
in \cite{aclm:09} is recalled and explained in \S\ref{sec-trees}; the notion of comparable polynomials
and the computation of resultants are also recalled in this section. In order to discuss
the unicity of Newton trees we work with the notion of P-good coordinates in \S\ref{P-good-coor-sec}, introduced
by P.~Gonz{\'a}lez-P{\'e}rez in~\cite{go:01}. The last two sections are devoted to state and prove the main
results of the paper.

\section{On $\nu$-quasi-ordinary polynomials and Newton maps}\label{sec-nuqonewton}
\medskip

\subsection{Basic facts on Newton polyhedra} 
\mbox{}

We shall follow the terminology of  \cite{aclm:05,aclm:09}. 
The $d$-tuples will be denoted in bold letters, e.g. $\bdx:=(x_1,\dots,x_d)$, and we will use the following
notations:
\begin{itemize}
\item $\bdx^{\balpha}:=x_1^{\alpha_1}\cdot\ldots\cdot x_d^{\alpha_d}$; 
\item $\mathbf{p\cdot q}:=p_1 q_1+\dots+p_d q_d$;
\item $\mathbf{p*q}:=(p_1 q_1,\dots,p_d q_d)$;
\item $\mathbf{\dfrac{p}{q}}:=\left(\dfrac{p_1}{q_1},\dots,\dfrac{p_d}{q_d}\right)$.
\end{itemize}

For Newton theory, we shall also follow the terminology of \cite{agv,kus,W:04}; note that for the terms
\emph{polyhedron} and \emph{diagram} we follow the convention in \cite{aclm:05,aclm:09,W:04};
the terms are exchanged in \cite{agv}.
Let  $\mathbb{N}\subset \mathbb{R} _+$ be the sets of non-negative integers and 
non-negative real numbers  respectively. Let $E\subset \mathbb{N}^{d+1}$ be a set of points and  $d\geq 1$.

\begin{itemize}
\smallbreak\item  The \emph{Newton diagram} $\mathcal{N}_+(E) $ is defined by the  convex hull in $(\mathbb{R} _+)^{d+1}$ of the set 
 $E+(\mathbb{R} _+)^{d+1}$.
\smallbreak\item The \emph{Newton polyhedron}  $\mathcal{N}(E)$
of $E$ is  defined by the union of all 
compact faces of the Newton diagram of $E$.
\smallbreak\item The smallest set $E_0$ such that
$\mathcal{N}_+(E_0) =\mathcal{N}_+(E) $ is called the \emph{set of vertices} of $E$.
\smallbreak\item A diagram is called \emph{polygonal} if the maximal dimension of its compact faces is one.
\end{itemize}
  
\smallbreak

Let $\bk$ be an algebraically closed field of characteristic zero. We write 
$f(\bdy)=f(y_1,\ldots,y_{d+1})$ for a formal power series   of several  variables   $\bk[[\bdy]]$. 
If the Taylor expansion of $f(\bdy)$
is $\sum_{\balpha\in\bn^{d+1}} c_{\balpha} \bdy^{\balpha }$, 
then  the \emph{support} $\supp (f)$ is defined to be $
\{\balpha\in \mathbb{N}^{d+1} \vert  c_{\balpha}\neq 0\}$.

\begin{itemize}
\smallbreak\item The \emph{Newton polyhedron} $\mathcal{N}(f)$ of $f$   
(resp. the \emph{Newton diagram} $\mathcal{N}_+(f) $) of $f$ is defined
 by the Newton polyhedron (resp. the Newton  diagram)
of the set $\supp (f)$.

\smallbreak\item If $\gamma $ is a compact face of $\mathcal{N}_+(f)$ then  the 
(weighted-homogeneous polynomial)  $f_{\gamma }(\bdy):=
\sum _{(\balpha )\in \gamma }
c_{\balpha }\bdy^{\alpha } \in \bk[\bdy]$ is  called 
the \emph{polynomial associated with $\gamma $}.
\smallbreak\item If the Newton diagram  $\mathcal{N}_+(f) $ is polygonal we also say that 
the polyhedron $\mathcal{N}(f)$
is a \emph{monotone polygonal path}.
\end{itemize}

If $f=f_1\cdot\ldots\cdot f_s$ in $\bk[[\bdy]]$ then 
$$
\mathcal{N}_+(f)=\mathcal{N}_+(f_1)+\ldots+\mathcal{N}_+(f_s),
$$
where the sum is  the Minkowski sum of Newton diagrams. This implies the following result which is well-known for 
experts, e.g. see \cite[Lemma~12]{gg:05}.

\begin{lemma}\label{polygonal}
If $f\in \bk[[\bdy]]$ has a Newton polyhedron which is a monotone polygonal path, then any irreducible factor of $f$ 
which is not associated with $y_i$,
$i=1\ldots, {d+1}$, has a Newton polyhedron which is a monotone polygonal path.
\end{lemma}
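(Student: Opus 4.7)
My plan is to reduce the statement to a basic dimension inequality for Minkowski sums. First I would translate the polygonal condition into a face-by-face statement: for any $g\in\bk[[\bdy]]$, the compact faces of $\mathcal{N}_+(g)$ are precisely the minimizing faces
\[
F_\phi(g):=\{p\in\mathcal{N}_+(g) : \phi\cdot p=\min_{\mathcal{N}_+(g)} \phi\}
\]
as $\phi=(\phi_1,\dots,\phi_{d+1})$ ranges over vectors with all $\phi_i>0$. Thus $\mathcal{N}(g)$ is a monotone polygonal path if and only if the supremum of $\dim F_\phi(g)$ over such $\phi$ equals $1$.

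Writing $f=f_1\cdots f_s$ with $f_j$ irreducible, the Minkowski identity $\mathcal{N}_+(f)=\sum_j \mathcal{N}_+(f_j)$ recalled above the lemma, together with linearity of $\phi$, gives
\[
F_\phi(f)=F_\phi(f_1)+\cdots+F_\phi(f_s).
\]
I would then invoke the elementary fact that $\dim(A+B)\ge \max(\dim A,\dim B)$ for any non-empty polytopes $A,B$, which follows by translating $A$ by an arbitrary point of $B$. Iterating over the factors yields $\dim F_\phi(f_j)\le \dim F_\phi(f)\le 1$ for every strictly positive $\phi$, so every compact face of $\mathcal{N}_+(f_j)$ has dimension at most $1$.

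To upgrade \emph{at most one} to \emph{exactly one}, which is the definition of polygonal, I would rule out the degenerate case in which $\mathcal{N}(f_j)$ reduces to a single vertex $v$. In that case every monomial of $f_j$ is divisible by $\bdy^{v}$, so $\bdy^{v}\mid f_j$; irreducibility of $f_j$ then forces $\bdy^{v}$ to be a single variable $y_i$ and $f_j$ to equal $\bdy^{v}$ times a unit, i.e., $f_j$ is associated with $y_i$, contrary to hypothesis. Otherwise $\mathcal{N}(f_j)$ contains at least two vertices, hence at least one compact edge by connectedness of the compact subcomplex, and its maximal compact face dimension equals $1$ as required.

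The argument is short and purely combinatorial; no substantial obstacle arises beyond carefully combining the Minkowski identity with the inequality $\dim(A+B)\ge\max(\dim A,\dim B)$. The only point requiring attention is the purpose of the exclusion \emph{not associated with $y_i$}: it eliminates precisely the single-vertex degeneracy handled in the previous paragraph.
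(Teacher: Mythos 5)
Your proof is correct and takes the approach the paper indicates: the paper itself does not spell out an argument for Lemma~\ref{polygonal}, merely recording the Minkowski-sum identity $\mathcal{N}_+(f)=\mathcal{N}_+(f_1)+\dots+\mathcal{N}_+(f_s)$ and pointing to \cite{gg:05}, while you flesh this out by passing to the compact faces $F_\phi$ for strictly positive $\phi$, using $F_\phi(f)=\sum_j F_\phi(f_j)$ together with $\dim(A+B)\ge\max(\dim A,\dim B)$. Your handling of the single-vertex degeneracy, where $\bdy^v\mid f_j$ and irreducibility forces $f_j$ to be associated to some $y_i$, is exactly what the hypothesis ``not associated with $y_i$'' is there to exclude, so the argument is complete and consistent with the paper's intent.
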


Let $\gamma $ be a 1-dimensional compact face of $\mathcal{N}(f)$. There exist two integral points $A, A_1$ in  
$\mathbb{N}^{d+1}$ such that $\gamma$ is the edge $[A,A_1]$.  Let $c$ denote the greatest common divisor  
of all coordinates of the vector $A_1-A$ and let
$\mathbf{u}:=\frac{1}{c}(A_1-A)\in \mathbb{Z}^{d+1}$.
The number of points with integer coordinates  on~$\gamma$ is $c+1$
and any of them is of the form $A_j=A+j\mathbf{u}$, 
with $0\leq j\leq c$. Then
\begin{equation}\label{factor-cara}
f_{\gamma }(\bdy)=\sum_{a\in \gamma} c_a \bdy^{a }=\bdy^A(\sum_{j=0}^{c} c_{A_j}\bdy^{j u})=
\bdy^A p(f,\gamma)(\bdy^u)
\end{equation}
where $p(f,\gamma)(t)$ is the polynomial  $p(f,\gamma)(t):=\sum_{j=0}^c c_{A_j}t^{j}$ 
of degree $c$; since $A$ and $A_1$ are vertices of $\gamma$ we have  $p(f,\gamma)(0)\ne 0$. 
Let us factor $p(f,\gamma)(t)$ as
\begin{equation}\label{factor-cara-1}
p(f,\gamma)(t)=a_\gamma\prod (t-\mu_i)^{m_i},\ 
\sum m_i=c,\
\mu_i\in \bk^*,\ 
\mu_i\ne \mu_j, \text{ if }i\neq j,
\text{ and }
a_\gamma\in \bk^*.
\end{equation}
Let us summarize these facts.
\begin{proposition}\label{factor-cara-2}
Let $\gamma =[A,A_1]$ be a 1-dimensional compact face of $\mathcal{N}(f)$. With the notations
of \eqref{factor-cara} and \eqref{factor-cara-1} the
 polynomial  $f_{\gamma }(\bdy)$ decomposes as $a_\gamma \bdy^A \prod(\bdy^u-\mu_i)^{m_i}$ in $\bk((\bdy))$. 
\end{proposition}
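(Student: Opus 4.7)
The proof is essentially a direct substitution combining equations \eqref{factor-cara} and \eqref{factor-cara-1}, so the plan is very short. My plan is to take the first equation $f_{\gamma}(\bdy) = \bdy^A\, p(f,\gamma)(\bdy^u)$ from \eqref{factor-cara}, plug into it the factorization $p(f,\gamma)(t) = a_\gamma \prod_i (t-\mu_i)^{m_i}$ given by \eqref{factor-cara-1}, and then read off the result.

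The one point that actually requires a comment is the ambient ring. The vector $u = \frac{1}{c}(A_1-A)$ lies in $\bz^{d+1}$ but need not lie in $\bn^{d+1}$, since although $A,A_1\in\bn^{d+1}$ their difference can have negative entries. Consequently $\bdy^u$ is a Laurent monomial, and the factors $\bdy^u-\mu_i$ are not elements of $\bk[[\bdy]]$ in general. This is precisely the reason the statement decomposes $f_\gamma$ in $\bk((\bdy))$ rather than in $\bk[\bdy]$ or $\bk[[\bdy]]$. To justify the substitution, I would simply note that the map $\bk[t,t^{-1}]\to \bk((\bdy))$ sending $t\mapsto \bdy^u$ is a well-defined ring homomorphism, so applying it to the identity \eqref{factor-cara-1} yields
\[
p(f,\gamma)(\bdy^u) = a_\gamma \prod_i (\bdy^u - \mu_i)^{m_i}
\]
as an equality in $\bk((\bdy))$.

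Multiplying this identity by $\bdy^A\in\bk[\bdy]\subset\bk((\bdy))$ and invoking \eqref{factor-cara} gives the desired decomposition. Since the statement is really a reformulation of the two previous displayed equations, there is no substantive obstacle; the only thing one must be careful about is precisely this enlargement of the coefficient ring to $\bk((\bdy))$ so that the Laurent monomial $\bdy^u$ is a bona fide element.
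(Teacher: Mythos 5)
Your proposal is correct and matches the paper's approach: the paper introduces Proposition~\ref{factor-cara-2} with ``Let us summarize these facts'' and gives no separate proof, since it is exactly the combination of \eqref{factor-cara} and \eqref{factor-cara-1}. Your remark about $\mathbf{u}=\frac{1}{c}(A_1-A)$ possibly having negative entries, hence $\bdy^{\mathbf{u}}$ being a Laurent monomial and the decomposition living in $\bk((\bdy))$, correctly pinpoints the one detail implicit in the statement.
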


\begin{theorem}\cite[Theorem~3]{gg:05} If $f$ in $\bk[[\bdy]]$ is irreducible and has a  Newton polyhedron 
$\mathcal{N}(f)$ which is a monotone polygonal path,
then the diagram $\mathcal{N}_+(f)$ has only one compact edge $\gamma$ and the polynomial $p(f, \gamma)$ has only one
root $\mu_f$ in $\bk^*$, i.e. $p(f,\gamma)=a_\gamma(t-\mu_f)^{c},$ and  $a_\gamma\in \bk^*$.
\end{theorem}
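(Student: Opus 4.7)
The plan is to establish a weighted Hensel principle: any coprime factorization of a weighted-initial form of an irreducible $f \in \bk[[\bdy]]$ (with respect to a weight in the dual fan of $\mathcal{N}(f)$) must lift to an actual factorization of $f$, contradicting irreducibility. Equivalently, the lifting can be implemented by the Newton map associated to a face and a root, which converts the weighted grading into an ordinary one where standard Hensel's lemma applies. The two conclusions of the theorem will both follow from this principle.

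For the single-root property on each edge, I would fix a compact edge $\gamma = [A, A_1]$ with primitive direction $\bdu$ and pick a weight $\bdw \in \bn^{d+1}$ in the relative interior of the cone dual to $\gamma$, so that the $\bdw$-initial form of $f$ is precisely $f_\gamma$. By Proposition~\ref{factor-cara-2}, $f_\gamma = a_\gamma \bdy^A \prod_i(\bdy^\bdu - \mu_i)^{m_i}$ with distinct $\mu_i \in \bk^*$. Two distinct roots $\mu_1 \ne \mu_2$ would yield coprime weighted-homogeneous factors $(\bdy^\bdu - \mu_1)^{m_1}$ and $a_\gamma \bdy^A \prod_{i \ne 1}(\bdy^\bdu - \mu_i)^{m_i}$; applying the weighted Hensel principle would then produce a nontrivial factorization of $f$, forcing $p(f, \gamma) = a_\gamma(t - \mu_\gamma)^c$ for a single $\mu_\gamma \in \bk^*$.

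For the single-edge property, suppose instead $\mathcal{N}(f)$ contained two consecutive compact edges $\gamma_1, \gamma_2$ meeting at a vertex $V$. Applying the Newton map associated with $(\gamma_1, \mu_1)$ would yield, up to a unit times a monomial, an irreducible series $\tilde f$ whose Newton polyhedron is a polygonal path reflecting the contributions of $\gamma_2, \ldots, \gamma_s$ (with $\gamma_1$ effectively collapsed into a vertex of $\mathcal{N}(\tilde f)$). Iterating this peeling-off and tracking the combinatorial effect on the polyhedron, one expects to reach a transformed series whose initial form on some face admits a coprime factorization, contradicting the single-root property established in the previous paragraph. The main obstacle is precisely this combinatorial bookkeeping: it amounts to a polygonal-path Hensel argument that extracts the Minkowski-summand structure of the Newton polyhedron as a multiplicative factorization of $f$, and is more delicate than the single-weight Hensel step used for the single-root property — in particular one must verify carefully that the Newton map preserves irreducibility modulo the extracted monomial and unit factors.
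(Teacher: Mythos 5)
The paper does not prove this theorem; it simply cites it from \cite{gg:05}, so there is no internal proof to compare against. Evaluating your proposal on its own terms: the first half is sound, the second is not.

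For the single-root property your argument is essentially correct. With a weight $\bdw>0$ normal to $\gamma$ one has $\In_\bdw(f)=f_\gamma$, and Proposition~\ref{factor-cara-2} (written polynomially, i.e.\ with the Laurent factor $\bdy^\bdu-\mu_i$ replaced by $\bdy^{\bdu^+}-\mu_i\bdy^{\bdu^-}$ where $\bdu=\bdu^+-\bdu^-$ have disjoint supports) gives a coprime factorization of $f_\gamma$ if two of the $\mu_i$ are distinct. Lifting this by the weighted Hensel lemma contradicts irreducibility. This is the same mechanism as the paper's Factorization Theorem~\ref{teorema1} and its corollaries.

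For the single-edge property you propose iterating Newton maps, and you candidly flag the gap. The gap is real and I think the chosen tool is the wrong one. Two concrete problems: first, the map $\epsilon_{\Gamma_1}\colon x_i\mapsto y_i^{p_i}$ is a ramified substitution, not an automorphism, so a Newton map does \emph{not} in general send an irreducible series to an irreducible one; your assertion that the transform $\tilde f$ is irreducible is unjustified. Second, a coprime factorization of an initial form of $\tilde f$ only factors $\tilde f$ in the new variables; pulling such a factorization back through $\epsilon_{\Gamma_1}$ to produce a genuine factorization of $f$ is exactly what does not follow. A cleaner route uses only the \emph{single} weighted Hensel step you already set up for $\gamma_1=[A_0,A_1]$: once the single-root property gives $f_{\gamma_1}=a\,\bdy^{M}(\bdy^{\bdu^+}-\mu\bdy^{\bdu^-})^{c}$ with $M=A_0-c\bdu^-$, the two displayed factors are automatically coprime in $\bk[\bdy]$; hence if $M\neq 0$ the weighted Hensel lemma again factors $f$ nontrivially. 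Irreducibility therefore forces $M=0$, i.e.\ $A_0=c\bdu^-$ and $A_1=c\bdu^+$. Since $\gamma_1$ is the top edge of a monotone polygonal path, the last coordinate of $\bdu$ is negative, so $d+1\in\supp(\bdu^-)$ and thus $A_1$ has vanishing last coordinate. A monotone path whose lower vertex already lies on $\{y_{d+1}=0\}$ cannot continue, so $\gamma_1$ is the only compact edge. No iteration, no Newton map, and no preservation-of-irreducibility claim is needed.
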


\subsection{On $\nu$-quasi-ordinary polynomials }
\mbox{}

\begin{notation}\label{inicial}
Let 
$f(\bdx,z):=\sum c_{\balpha ,\beta} \bdx^{\balpha }z^{\beta } \in \bk[[\bdx]][z]$, 
$\balpha\in\bn^d$, $\beta\in\bn$, be a 
$z$-polynomial with coefficients in the formal
power series ring $\bk[[\bdx]]$, 
$\bdx:=(x_1,\cdots, x_d)$. Since the ring $\bk[[\bdx]][z]$ is factorial,
we may assume that $f(\bdx,z)=x_1^{n_1}\cdot\ldots\cdot x_d^{n_d} g(\bdx,z)$ where $g(\bdx,z)$ is regular 
of order say $n\geq 0$, that is 
$g(\mathbf{0},z)=a_0 z^n+a_1 z^{n+1}+\dots $, $a_0\in \bk^*$. 
Applying  Weierstrass preparation theorem  to $g(\bdx,z)$ there exists a unit $u(\bdx,z) \in \bk[[\bdx,z]]$
and a Weierstrass polynomial $h(\bdx,z)=z^n+a_1(\bdx)z^{n-1}+\ldots+ a_{n-1}(\bdx)z+a_0(\bdx)$
with $a_i(\textbf{0})=0$ such that $g(\bdx,z)=h(\bdx,z)u(\bdx,z)$ and $u(\textbf{0},0)=a_0\in \bk^*$.
e.g. see \cite[Chapter I, p. 11]{GLS}. Note that $\mathcal{N}(g)=\mathcal{N}(h)$ and we do not need to
know explicitely~$h$ for the constructions in this paper.
\end{notation}

\begin{remark}\label{1-vertex}
Let $f(\bdx,z):=\sum c_{\balpha ,\beta} \bdx^{\balpha }z^{\beta }=
x_1^{n_1}\cdot\ldots\cdot x_d^{n_d} g(\bdx,z)  \in \bk[[\bdx]][z]$ 
be a polynomial and assume we are as in Notation \ref{inicial}. 
Its Newton polyhedron $\mathcal{N} (f)$ consists only of one vertex
$(\bdn,n):=(n_1,\ldots,n_d,n)$ if and only if $\mathcal{N}_+ (f)=(\bdn,n)+(\mathbb{R} _+)^{d+1}$.
\end{remark}

\begin{definition}\label{eliminated}
A compact face ${\gamma }$ of the Newton polyhedron $\mathcal{N}(f)$  \emph{can be eliminated} if the polynomial 
$f_{\gamma }$ associated with $\gamma $ can be written as 
$f_{\gamma }=\bdx^{\mathbf{m}} (z-h(\bdx))^n \in \bk[\bdx,z]$, with $h\in \bk[\bdx], n\geq 1$. 
In such a case and by applying  the change of variables map
$\sigma:\bk[[\bdx]][z]\to \bk[[\bdx]][z_1]$
defined
by $z\mapsto z_1=z+h(\bdx)$ to $f$ then the face $\gamma$ is eliminated in 
the new Newton diagram $\mathcal{N}_+ (f\circ \sigma)$.

\end{definition}

\begin{definition}\label{inicial2}
Let $f(\bdx,z)=\sum c_{\balpha ,\beta} \bdx^{\balpha }z^{\beta }=
x_1^{n_1}\cdot\ldots\cdot x_d^{n_d} g(\bdx,z)  \in \bk[[\bdx]][z]$ 
be a polynomial and assume we are as in Notation \ref{inicial} and denote 
$A:=(\bdn,n)\in \mathbb{Q}^{d+1}$.
Let 
$$
\pi_A:\mathcal{N}_+(f)\setminus A\to \mathbb{Q}^d\equiv\mathbb{Q}^{d+1}\cap\{z=0\}
$$ 
be the projection into $\mathbb{Q}^d
$ 
with centre at $A$. We define
$\mathcal{N}_{<n}(f)$ to be the set of points 
in $\mathcal{N}_+(f)$ 
whose $z$-coordinate is smaller than $n$. A compact face ${\Gamma}$ of $\mathcal{N}_+(f)$ will be  called \emph{$\nu$-proper}
if  $A\in{\Gamma}$, $A\ne{\Gamma}$ and ${\Gamma}\setminus\{A\}\subset\mathcal{N}_{<n}(f)$. We define 
$\mathcal{N}^0(f)$ to be the set of all compact faces of $\pi_A(\mathcal{N}_{<n}(f))$.
\end{definition}

\begin{remark}
For a $\nu$-proper face $\Gamma$ of $\mathcal{N}(f)$, $\pi_A(\Gamma)$ is a compact 
face of $\mathcal{N}^0(f)$ of dimension one less than the dimension of $\Gamma$. 
In particular, $\mathcal{N}^0(f)$ consists of exactly one vertex if and only if 
$\mathcal{N}(f)$ has only one $\nu$-proper compact face and this face is $1$-dimensional.
\end{remark}

\begin{definition}\label{suitable}
A regular system of parameters  $x_1,\ldots,x_d,z$
of the local regular ring $\bk[[\bdx,z]]$ is called a
 \emph{suitable system of coordinates}  for a  polynomial $f \in \bk[[\bdx]][z]$
if either 
\begin{itemize}
\smallbreak\item $\mathcal{N}^0(f)$ is void, or
\smallbreak\item   $\mathcal{N}^0(f)$ has more than one vertex, or 
\smallbreak\item The set $\mathcal{N}^0(f)$ consists of exactly one vertex and the corresponding $\nu$-proper $1$-dimensional
compact face $\Gamma$ cannot be eliminated (Definition~\ref{eliminated}).
\end{itemize}
\end{definition}

\begin{remark}\label{suitable-change}
The condition to be a suitable system of coordinates for a polynomial $f\in \bk[[\bdx]][z]$ 
is weaker than the conditions of \emph{good coordinates} in \cite{aclm:05} because
it involves exactly one 
$\nu$-proper $1$-dimensional
compact face $\Gamma$ of the Newton diagram $\mathcal{N}_+(f)$, see also~\S\ref{P-good-coor-sec}.
\end{remark}

\begin{definition}\label{nu-quo}
Let $f(\bdx,z): \in \bk[[\bdx]][z]$ 
be a polynomial and assume we are as in Notation \ref{inicial} and Definition~\ref{inicial2}. 
We say that $f$ is a \emph{$\nu$-quasi-ordinary} polynomial if 
\begin{itemize}
\smallbreak\item $\mathcal{N}(f)$ has only one $\nu$-proper face $\Gamma_1$,
\smallbreak\item $\Gamma_1$ is a $1$-dimensional face and 
\smallbreak\item $\Gamma_1$ cannot be eliminated. 
\end{itemize}
The polynomial $f_{\Gamma_1 }$ is called the \emph{initial form} of $f$.
\end{definition}

The class of \emph{$\nu$-quasi-ordinary} singularities was introduced by
H. Hironaka  in \cite{hr:74} where he proved that 
quasi-ordinary polynomials are indeed  $\nu$-quasi-ordinary polynomials. 
In comparison with the condition to be a quasi-ordinary polynomial, 
$\nu$-quasi-ordinary is a very mild condition. 
However, $\nu$-quasi-ordinary have interesting properties and some of them will be discussed here.

We consider  $f$ to be a $\nu$-quasi-ordinary polynomial with its $\nu$-proper face $\Gamma_1$
and  keep the above notations.
The initial form  $f_{\Gamma_1 }$ of $f$ can be  written as follows:
\begin{equation}
\label{eq-fgamma}
f_{\Gamma_1 }=
a_{\Gamma_1 }\bdx^{\mathbf{n}}z^{n_{d+1}}\prod _{j=1}^{k}(z^p-\mu _j 
\bdx^{\mathbf{q}})^{m_j},
\end{equation}
where factors are  irreducible in $\bk[[\bdx]][z]$, 
i.e. $\gcd (\mathbf{q},p):=
\gcd (q_1,\dots , q_d,p)=1$, $\mu_j\in \bk^*$ with $\mu_j\ne \mu_i$ if $i\neq j$, and $a_{\Gamma_1 }\in \bk^*$
(see Proposition~\ref{factor-cara-2}).

\begin{definition}
A polynomial $f(\bdx,z)\in \bk[[\bdx]][z]$ is called \emph{elementary} 
if its Newton polyhedron $\mathcal{N}(f)$ consists of
 only one compact face which is a line segment 
$[(\mathbf{0},n),(\mathbf{r},0)]\subset \mathbb{R}^{d+1}$. 

More generally, if 
$\Gamma $ is a compact one-dimensional face of the Newton diagram $\mathcal{N}_+(f)$ of a
  polynomial $f(\bdx,z)\in \bk[[\bdx]][z]$, 
we say that the polynomial $g$ is \emph{$\Gamma $-elementary} if  its Newton polyhedron
$\mathcal{N}(g)$ consists of one compact face $\widetilde  \Gamma$
which is a translation of the face $\Gamma $. 
We denote the initial form of $g$ by $\In (g):=g_{\widetilde \Gamma }\in \bk[\bdx,z]$.
\end{definition}

In \cite[Theorem 1.5]{aclm:09} the following factorization theorem was proved.

\begin{fthm}\label{teorema1}
Let  $f\in \bk[[\bdx]][z]$ be a $\nu$-quasi-ordinary polynomial with its $\nu$-proper face $\Gamma_1 $
such that $f_{\Gamma_1}$ is as in{\rm~\eqref{eq-fgamma}}. 
Then there exist $k$ different
$\Gamma_1$-elementary polynomials $g_{\Gamma_1,j}\in \bk[[\bdx]][z]$, for $1\leq j \leq k$, which divide~$f$ and such that 
$\In (g_{\Gamma_1,j})=
(z^p-\mu _j \bdx^{\mathbf{q}})^{m_j}$, that is 
\begin{equation}\label{teorema1-factor}
 f=h G^{\Gamma_1}, \,\, \textit{with  }\,\, G^{\Gamma_1}:=\prod_{j=1}^k g_{\Gamma_1,j},\,\, \textit{ for some }\,\,  h\in  \bk[[\bdx]][z].
\end{equation}
\end{fthm}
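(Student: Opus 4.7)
I would prove this by a weighted Hensel-style lifting of the factorization \eqref{eq-fgamma} of the initial form to a factorization of $f$ itself. Let $\mathbf{w}\in\mathbb{Z}_{>0}^{d+1}$ be a primitive integer vector in the (relative interior of the) outer normal cone of the face $\Gamma_1$; such a strictly positive choice exists because the normal cone of any compact face of $\mathcal{N}_+(f)=\mathcal{N}(f)+\mathbb{R}_+^{d+1}$ lies in $\mathbb{R}_{\ge 0}^{d+1}$, and we may perturb to the interior of the positive orthant without changing $\Gamma_1$. Setting $\deg_{\mathbf{w}}(\bdx^{\balpha}z^{\beta}):=\mathbf{w}\cdot(\balpha,\beta)$ and $N:=\mathbf{w}\cdot(\bdn,n)$, the initial form $f_{\Gamma_1}$ is the weighted-homogeneous component of $f$ of weighted degree $N$, every other monomial of $f$ has weighted degree $>N$, and the induced filtration on $\bk[[\bdx]][z]$ is separated.

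From \eqref{eq-fgamma} I would write $f_{\Gamma_1}=M\cdot\prod_{j=1}^{k}P_j$ with $M:=a_{\Gamma_1}\bdx^{\bdn}z^{n_{d+1}}$ and $P_j:=(z^p-\mu_j\bdx^{\bdq})^{m_j}$. The distinct $\mu_j$ together with $\gcd(\bdq,p)=1$ make the $P_j$ pairwise coprime in $\bk[\bdx^{\pm 1}][z]$, and each $P_j$ is coprime there to $M$ (since $p\ge 1$ and $\mu_j\ne 0$, so $z^p-\mu_j\bdx^{\bdq}$ is divisible by neither $z$ nor any $x_i$ in this localization). Consequently, for any weighted-homogeneous $R\in\bk[\bdx^{\pm 1}][z]$ the Bezout equation
\[
\Delta_M\prod_{j}P_j+\sum_{j=1}^{k}\Delta_j\,M\prod_{i\ne j}P_i=R
\]
admits weighted-homogeneous solutions $\Delta_M,\Delta_j\in\bk[\bdx^{\pm 1}][z]$, and by reducing each $\Delta_j$ modulo $P_j$ in the variable $z$ one may further assume $\deg_z\Delta_j<\deg_z P_j$.

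The iterative lifting then proceeds inductively in $s\ge N$: suppose $h^{(s)},G_j^{(s)}\in\bk[[\bdx]][z]$ have been constructed with $\In(h^{(s)})=M$, $\In(G_j^{(s)})=P_j$, and $f\equiv h^{(s)}\prod_{j}G_j^{(s)}$ modulo weighted degree $>s$. Let $R$ be the weighted-homogeneous component of weighted degree $s+1$ of $f-h^{(s)}\prod_{j}G_j^{(s)}$, and apply the Bezout step to produce corrections $\Delta_M$ of weighted degree $s+1-\sum_j\deg_{\mathbf{w}}P_j$ and $\Delta_j$ of weighted degree $s+1-\deg_{\mathbf{w}}M-\sum_{i\ne j}\deg_{\mathbf{w}}P_i$; set $h^{(s+1)}:=h^{(s)}+\Delta_M$ and $G_j^{(s+1)}:=G_j^{(s)}+\Delta_j$. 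Positivity of $\mathbf{w}$ ensures the limits $h$ and $g_{\Gamma_1,j}$ exist in $\bk[[\bdx]][z]$, and by construction $\In(g_{\Gamma_1,j})=P_j=(z^p-\mu_j\bdx^{\bdq})^{m_j}$ with all remaining monomials of strictly larger weighted degree. Hence $\mathcal{N}(g_{\Gamma_1,j})$ consists of a single translate of $\Gamma_1$, so each $g_{\Gamma_1,j}$ is $\Gamma_1$-elementary and $f=h\prod_{j=1}^{k}g_{\Gamma_1,j}$ as required.

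\textbf{Main obstacle.} The delicate point is justifying that the Bezout solutions at each stage can be taken in $\bk[[\bdx]][z]$, not merely in the localization $\bk[\bdx^{\pm 1}][z]$: one must show the $\bdx$-denominators introduced by Bezout cancel, and that the $z$-degrees of the lifts remain bounded so the limits are $z$-polynomials rather than $z$-power series. The monomial factor $M$ plays a crucial role—the $\bdx^{\bdn}$ inside $M$ absorbs the denominators coming from inverting in the coprime decomposition—while the $z$-degree control comes from the reduction modulo $P_j$ at each step. A perhaps cleaner alternative is to first perform the lifting in the complete local ring $\bk[[\bdx,z]]$, where the standard form of Hensel's lemma applies directly, and then invoke the Weierstrass preparation of Notation~\ref{inicial} to extract the polynomial factors $g_{\Gamma_1,j}\in\bk[[\bdx]][z]$ from the resulting power series factors.
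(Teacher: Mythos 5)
Your plan — a weighted Hensel lifting along a strictly positive weight $\mathbf{w}$ normal to $\Gamma_1$ — is a natural route, and you correctly locate one of the two difficulties. But as written the argument has two real gaps, and the second is not mentioned at all.

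\textbf{The Bezout step is not justified.} In $\bk[[\bdx]][z]$ (or in each graded piece $\bk[\bdx,z]_{s+1}$) the factor $M=a_{\Gamma_1}\bdx^{\bdn}z^{n_{d+1}}$ is not a unit; the ideal generated by $\prod_j P_j$ and the $M\prod_{i\neq j}P_i$ is strictly smaller than the unit ideal whenever $n_{d+1}>0$ or $\bdn\neq\mathbf 0$. You remark that \emph{``the $\bdx^{\bdn}$ inside $M$ absorbs the denominators,''} but this is a heuristic, not an argument: one would need to prove that, at every weighted degree $s+1$, the remainder $R$ actually lies in the (homogeneous piece of the) ideal $(\prod_j P_j,\; M\prod_{i\neq j}P_i)$ --- a statement about $f$ itself, using the $\nu$-quasi-ordinary hypothesis, which your proposal never invokes. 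Your ``cleaner alternative'' does not repair this: the standard Hensel lemma in $\bk[[\bdx,z]]$ lifts a factorization of $f(\mathbf 0,z)$ modulo the maximal ideal, i.e.\ the coprime factorization $z^n\cdot u(z)$, which is nothing but Weierstrass preparation and carries no information about $\Gamma_1$; the coprime factors seen by that Hensel lemma are not the $P_j$.

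\textbf{The conclusion that $g_{\Gamma_1,j}$ is $\Gamma_1$-elementary does not follow.} You assert: \emph{``by construction $\In(g_{\Gamma_1,j})=P_j$ with all remaining monomials of strictly larger weighted degree. Hence $\mathcal N(g_{\Gamma_1,j})$ consists of a single translate of $\Gamma_1$.''} The ``hence'' is false in general. Having all monomials of weighted $\mathbf{w}$-degree $\ge$ that of $P_j$ only places the support of $g_{\Gamma_1,j}$ in the half-space above the supporting hyperplane of $\Gamma_1$; it does not place it in the much smaller region $\widetilde\Gamma_1+\mathbb R_+^{d+1}$ that is required for the Newton polyhedron to reduce to a single segment. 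For instance (with $d=2$, weights $(1,2,2)$) the series $z^2-x_1^2x_2+x_1^5$ has initial form $z^2-x_1^2x_2$ and every other monomial of strictly larger weighted degree, yet its Newton polyhedron has three vertices and two compact edges, so it is not $\Gamma$-elementary. Your reduction of $\Delta_j$ modulo $P_j$ controls the $z$-degree but not the $\bdx$-support, so nothing in the lifting prevents such monomials from appearing. This is the point where the hypothesis that $f$ is $\nu$-quasi-ordinary (so $\Gamma_1$ is the unique $\nu$-proper face and the support of $f$ is constrained accordingly) must actually enter; a proof that ignores it cannot deliver the $\Gamma_1$-elementary conclusion.

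A strategy that does handle both issues, and is closer in spirit to the setting of this paper, is to first apply the monomial substitution $\epsilon_{\Gamma_1}\colon x_i\mapsto y_i^{p_i},\ z\mapsto\bdy^{\mathbf q'}z_1$ of~\S\ref{sec-newton}; this flattens $\Gamma_1$, so that after removing the obvious monomial factor and applying Weierstrass preparation, one has a Weierstrass polynomial whose reduction modulo $(\bdy)$ is $z_1^{n_{d+1}}\prod_j(z_1^p-\mu_j)^{m_j}$, a genuinely coprime factorization over $\bk$. Classical Hensel in $\bk[[\bdy]][z_1]$ then applies, and the $\Gamma_1$-elementary shape of the factors is obtained by pulling back and analysing how $\epsilon_{\Gamma_1}$ transforms Newton polyhedra. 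That route avoids the Bezout-in-the-wrong-ring problem entirely and makes the single-face conclusion a computation rather than an assertion.
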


Factorization Theorem~\ref{teorema1} has the following consequences.

\begin{cor}
Let $f$ be an irreducible  polynomial which is $\nu$-quasi-ordinary. 
Then it is elementary and $\In (f)=f_0^k$,
where $f_0 \in \bk[\bdx,z]$ is irreducible. 
 \end{cor}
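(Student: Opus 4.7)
The plan is to apply the Factorization Theorem~\ref{teorema1} to $f$ and to exploit that $\bk[[\bdx]][z]$ is a UFD. Writing $f = h\cdot\prod_{j=1}^{k} g_{\Gamma_1,j}$, each $\Gamma_1$-elementary factor $g_{\Gamma_1,j}$ is a non-unit (its initial form $(z^p-\mu_j\bdx^{\mathbf{q}})^{m_j}$ has positive $z$-degree), and distinct factors are pairwise non-associate, since being associate in $\bk[[\bdx]][z]$ forces the initial forms to differ only by a constant in $\bk^{*}$. Irreducibility of $f$ therefore yields $k=1$ and $h\in\bk[[\bdx]]^{*}$.

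Before invoking this, I would rule out that $f$ is associate to a single coordinate: if $x_i\mid f$ or $z\mid f$, irreducibility would make $f$ a unit multiple of the respective variable, but $\mathcal{N}(f)$ would then consist of a single vertex, so $\mathcal{N}^{0}(f)$ would be void, contradicting that $f$ is $\nu$-quasi-ordinary. This ensures in Notation~\ref{inicial} that $\mathbf{n}=\mathbf{0}$.

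With $h$ a unit, Minkowski additivity of Newton diagrams yields $\mathcal{N}(f) = \mathcal{N}(g_{\Gamma_1,1})$, a single $1$-dimensional compact face, and
\[
\In(f) \;=\; h(\mathbf{0})\,(z^p-\mu_1\bdx^{\mathbf{q}})^{m_1}.
\]
The endpoints of this face are therefore $(\mathbf{0},pm_1)$ and $(m_1\mathbf{q},0)$, showing that $f$ is elementary. Choosing $\lambda\in\bk^{*}$ with $\lambda^{m_1}=h(\mathbf{0})$ (possible since $\bk$ is algebraically closed) and setting $f_0 := \lambda\,(z^p-\mu_1\bdx^{\mathbf{q}})$ one obtains $\In(f) = f_0^{m_1}$, and $f_0$ is irreducible in $\bk[\bdx,z]$ because $\gcd(p,\mathbf{q})=1$.

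The only real obstacle is the bookkeeping needed to verify that the unique edge of $\mathcal{N}(f)$ has endpoints on the $z$-axis and on the hyperplane $\{z=0\}$: the first is the irreducibility argument above, and the second is forced automatically by comparing initial forms once $h$ is known to be a unit.
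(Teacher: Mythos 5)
Your proposal is correct and takes essentially the same approach as the paper: the corollary is stated as an immediate consequence of Factorization Theorem~\ref{teorema1}, and your argument is exactly the expected unwinding --- the UFD structure of $\bk[[\bdx]][z]$ together with irreducibility forces $k=1$ and $h$ a unit in the decomposition $f=h\prod_j g_{\Gamma_1,j}$, whence $\mathcal{N}(f)=\mathcal{N}(g_{\Gamma_1,1})$ is the single edge with endpoints $(\mathbf{0},pm_1)$ and $(m_1\mathbf{q},0)$ read off from $\In(g_{\Gamma_1,1})=(z^p-\mu_1\bdx^{\mathbf{q}})^{m_1}$, and $\In(f)=h(\mathbf{0})(z^p-\mu_1\bdx^{\mathbf{q}})^{m_1}$ is the $m_1$-th power of the irreducible binomial up to the scalar $\lambda$.
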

 
 \begin{cor}
Let $f$ be an elementary  polynomial such that
$\In (f)=\psi_1\cdot\ldots\cdot \psi_r$ is a factorization of $\In (f)$ where the factors
are pairwise coprime. Then we can decompose $f=f_1\cdot\ldots\cdot f_r$ where, for all $i=1,\dots, r$, the series 
$f_i$ is an elementary polynomial
such that $\In(f_i)=\psi _i$.
\end{cor}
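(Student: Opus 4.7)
The plan is to refine the decomposition produced by Factorization Theorem~\ref{teorema1}. The first step is to analyze $\In(f)$: since $f$ is elementary with Newton polyhedron the segment $\Gamma_1 = [(\mathbf{0}, n), (\mathbf{r}, 0)]$, the endpoints lie on the coordinate axes, which forces the axial monomial factor $\bdx^{\mathbf{n}} z^{n_{d+1}}$ in the general form~\eqref{eq-fgamma} to be trivial ($\mathbf{n} = \mathbf{0}$, $n_{d+1} = 0$). Thus
\[
\In(f) = a_{\Gamma_1} \prod_{j=1}^{k} (z^p - \mu_j \bdx^{\mathbf{q}})^{m_j},
\]
and the only irreducible factors of $\In(f)$ in $\bk[\bdx, z]$ are the pairwise coprime irreducibles $(z^p - \mu_j \bdx^{\mathbf{q}})$ (distinct because the $\mu_j$'s are distinct). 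Any pairwise coprime factorization $\In(f) = \psi_1 \cdots \psi_r$ therefore corresponds to a partition $\{1, \ldots, k\} = S_1 \sqcup \cdots \sqcup S_r$ together with units $c_i \in \bk^*$ satisfying $\prod_i c_i = a_{\Gamma_1}$, namely $\psi_i = c_i \prod_{j \in S_i} (z^p - \mu_j \bdx^{\mathbf{q}})^{m_j}$. (If $\Gamma_1$ can be eliminated, $\In(f) = (z - \lambda \bdx^{\mathbf{s}})^n$ is a prime power; the coprime factorization is forced to be trivial, and taking $f_1 := f$ proves the corollary immediately.)

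Assume $\Gamma_1$ cannot be eliminated, so that $f$ is $\nu$-quasi-ordinary. Then Factorization Theorem~\ref{teorema1} provides $\Gamma_1$-elementary polynomials $g_{\Gamma_1, j}$ with $\In(g_{\Gamma_1, j}) = (z^p - \mu_j \bdx^{\mathbf{q}})^{m_j}$ and a quotient $h \in \bk[[\bdx]][z]$ satisfying $f = h \cdot \prod_j g_{\Gamma_1, j}$. The Minkowski-sum identity
\[
\mathcal{N}_+(f) = \mathcal{N}_+(h) + \sum_{j=1}^{k} \mathcal{N}_+(g_{\Gamma_1, j}),
\]
together with the observation that the endpoints of the $\mathcal{N}_+(g_{\Gamma_1, j})$'s already add up to those of $\Gamma_1$ (since $n = p\sum_j m_j$ and $\mathbf{r} = (\sum_j m_j)\mathbf{q}$), forces $\mathcal{N}(h) = \{(\mathbf{0}, 0)\}$. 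Hence $h$ is a unit in $\bk[[\bdx, z]]$ with nonzero constant term equal to $a_{\Gamma_1}$.

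Finally, define
\[
f_1 := \frac{h}{a_{\Gamma_1}} \cdot c_1 \prod_{j \in S_1} g_{\Gamma_1, j}, \qquad f_i := c_i \prod_{j \in S_i} g_{\Gamma_1, j} \quad (2 \le i \le r).
\]
All $f_i$ lie in $\bk[[\bdx]][z]$, and a direct calculation using $\prod_i c_i = a_{\Gamma_1}$ yields $\prod_i f_i = f$. Because multiplication by the unit $h/a_{\Gamma_1}$ (whose initial form is $1$) preserves both the Newton polyhedron and the initial form, each $f_i$ is elementary, with Newton polyhedron equal to the Minkowski sum of the $g_{\Gamma_1, j}$-segments for $j \in S_i$ and initial form $\In(f_i) = \psi_i$. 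The main, though routine, obstacle is the Minkowski-sum computation showing $h$ is a unit: it hinges on the rigidity enforced by $f$ being elementary, which saturates the lengths of the $g_{\Gamma_1, j}$-segments along $\Gamma_1$ and leaves no room for the remainder $h$ to contribute any further vertex.
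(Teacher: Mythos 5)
Your proof is correct and follows the route the paper clearly intends: the corollary is stated as a direct consequence of Factorization Theorem~\ref{teorema1}, and you apply it exactly, then show the cofactor $h$ is a unit by observing that the Minkowski sum of the segments $\mathcal{N}(g_{\Gamma_1,j})$ already exhausts $\mathcal{N}_+(f)$, so $\mathcal{N}_+(h)$ must contain $(\mathbf{0},0)$. You are also right to treat the eliminable case separately, since ``elementary'' does not imply ``$\nu$-quasi-ordinary'' and the factorization theorem requires the latter; there $\In(f)$ is a prime power and the conclusion is vacuous. One small wording slip: the unit $h/a_{\Gamma_1}$ only enters $f_1$, so for $i\ge 2$ elementarity of $f_i$ follows directly from the fact that a product of $\Gamma_1$-elementary polynomials whose segments start on the $z$-axis and end on the hyperplane $\{z=0\}$ is again of that form.
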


\begin{cor}\label{cor-factor-gamma}
Let $f$ be a $\nu$-quasi-ordinary polynomial.
Then
$f=\bdx^{a}z^b f^{\Gamma_1 } g$
where $f^{\Gamma_1}$ is $\Gamma_1$-elementary, and $\In (f^{\Gamma_1})=f_{\Gamma_1}$.
\end{cor}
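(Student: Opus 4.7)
The strategy is to invoke the Factorization Theorem~\ref{teorema1} and then assemble $f^{\Gamma_1}$ by multiplying the resulting product of $\Gamma_1$-elementary factors by the monomial $a_{\Gamma_1}\bdx^{\bdn}z^{n_{d+1}}$ appearing in the decomposition~\eqref{eq-fgamma} of $f_{\Gamma_1}$.

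First, I would apply Factorization Theorem~\ref{teorema1} to $f$: it yields $\Gamma_1$-elementary polynomials $g_{\Gamma_1,j}\in\bk[[\bdx]][z]$ with $\In(g_{\Gamma_1,j})=(z^p-\mu_j\bdx^{\mathbf{q}})^{m_j}$, each dividing $f$, so that
\[
f=h\cdot G^{\Gamma_1}, \qquad G^{\Gamma_1}:=\prod_{j=1}^{k}g_{\Gamma_1,j}\in\bk[[\bdx]][z],
\]
for a remainder $h\in\bk[[\bdx]][z]$.

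Next, I would observe that $G^{\Gamma_1}$ is itself $\Gamma_1$-elementary. Newton polyhedra of products are Minkowski sums, and the Minkowski sum of compact $1$-faces all parallel to $\Gamma_1$ is again a compact $1$-face in the same direction. Concretely, $\mathcal{N}(G^{\Gamma_1})$ is the segment from $(\mathbf{0},n-n_{d+1})$ to $(L\mathbf{q},0)$, where $Lp=n-n_{d+1}$, and its associated polynomial is $\In(G^{\Gamma_1})=\prod_{j}(z^p-\mu_j\bdx^{\mathbf{q}})^{m_j}$. Through the Minkowski identity $\mathcal{N}_+(f)=\mathcal{N}_+(h)+\mathcal{N}_+(G^{\Gamma_1})$ and~\eqref{eq-fgamma}, the face $\Gamma_1$ of $\mathcal{N}(f)$ is obtained from $\mathcal{N}(G^{\Gamma_1})$ by summing with a single vertex of $\mathcal{N}(h)$, which must be $(\bdn,n_{d+1})$ with coefficient $a_{\Gamma_1}$ in $h$.

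Now I would define
\[
f^{\Gamma_1}:=a_{\Gamma_1}\,\bdx^{\bdn}\,z^{n_{d+1}}\,G^{\Gamma_1}.
\]
Multiplication by $\bdx^{\bdn}z^{n_{d+1}}$ translates $\mathcal{N}(G^{\Gamma_1})$ by the vector $(\bdn,n_{d+1})$, so the Newton polyhedron of $f^{\Gamma_1}$ is exactly $\Gamma_1$; hence $f^{\Gamma_1}$ is $\Gamma_1$-elementary. By~\eqref{eq-fgamma},
\[
\In(f^{\Gamma_1})=a_{\Gamma_1}\bdx^{\bdn}z^{n_{d+1}}\prod_{j=1}^{k}(z^p-\mu_j\bdx^{\mathbf{q}})^{m_j}=f_{\Gamma_1},
\]
as required.

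Finally, rearranging $f=h\,G^{\Gamma_1}=(h/a_{\Gamma_1})\cdot\bigl(f^{\Gamma_1}/(\bdx^{\bdn}z^{n_{d+1}})\bigr)$ gives the desired decomposition $f=\bdx^{a}z^{b}f^{\Gamma_1}g$ with $g:=h/a_{\Gamma_1}\in\bk[[\bdx]][z]$, the monomial $\bdx^{a}z^{b}$ absorbing the negative powers of $\bdx$ and $z$ on the right. The main technical point to verify is precisely this initial-form/Minkowski comparison, which both pins down the vertex $(\bdn,n_{d+1})$ of $\mathcal{N}(h)$ with coefficient $a_{\Gamma_1}$ and ensures that the translation of $\mathcal{N}(G^{\Gamma_1})$ onto $\Gamma_1$ is compatible with the product structure inherited from $f$.
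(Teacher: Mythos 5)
Your use of the Factorization Theorem and the Minkowski-sum analysis of the face $\Gamma_1$ are both correct as far as they go: you rightly identify that $(\bdn,n_{d+1})$ is a vertex of $\mathcal{N}_+(h)$ with coefficient $a_{\Gamma_1}$ in $h$, and that $G^{\Gamma_1}$ is $\Gamma_1$-elementary with $\In(G^{\Gamma_1})=\prod_j(z^p-\mu_j\bdx^{\mathbf{q}})^{m_j}$. The problem is the final rearrangement. You set $f^{\Gamma_1}:=a_{\Gamma_1}\bdx^{\bdn}z^{n_{d+1}}G^{\Gamma_1}$ so that $\In(f^{\Gamma_1})=f_{\Gamma_1}$ holds on the nose, and then write $f=(h/a_{\Gamma_1})\cdot\bigl(f^{\Gamma_1}/(\bdx^{\bdn}z^{n_{d+1}})\bigr)$ and assert that the negative exponents are ``absorbed'' into $\bdx^a z^b$. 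For that to produce a decomposition $f=\bdx^a z^b f^{\Gamma_1}g$ with $a\in\bn^d$, $b\in\bn$ and $g\in\bk[[\bdx]][z]$, you need $\bdx^{\bdn}z^{n_{d+1}}$ to \emph{divide} $h$. The $\bdx^{\bdn}$ part is fine (it divides $f$ and does not divide $G^{\Gamma_1}$), but $z^{n_{d+1}}$ does \emph{not} divide $h$ in general: knowing that $(\bdn,n_{d+1})$ is a vertex of $\mathcal{N}_+(h)$ says nothing about $h$ being divisible by $z^{n_{d+1}}$. Indeed, whenever $\Gamma_1$ is not the last compact face of the monotone polygonal path, $\mathcal{N}(h)$ has vertices of $z$-coordinate strictly below $n_{d+1}$.

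A concrete counterexample in one variable: take $f=(z^2-x)(z^3-x^2)$. Here $A=(0,5)$, $\Gamma_1=[(0,5),(1,3)]$, $f_{\Gamma_1}=z^3(z^2-x)$, so $n_{d+1}=3$, $a_{\Gamma_1}=1$, and the Factorization Theorem yields $G^{\Gamma_1}=z^2-x$, $h=z^3-x^2$. With your definition $f^{\Gamma_1}=z^3(z^2-x)$, the proposed rearrangement requires $(z^3-x^2)/z^3$ to lie in $\bk[[x]][z]$, which it does not. So the decomposition $f=x^a z^b f^{\Gamma_1}g$ with non-negative $a,b$ and polynomial $g$ does not hold for this choice of $f^{\Gamma_1}$.

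What this reveals is that the equality $\In(f^{\Gamma_1})=f_{\Gamma_1}$ in the corollary must be read modulo the monomial factor $a_{\Gamma_1}\bdx^{\bdn}z^{n_{d+1}}$: the intended $f^{\Gamma_1}$ is (up to the constant $a_{\Gamma_1}$) the elementary factor $G^{\Gamma_1}$ itself, with $\mathcal{N}(f^{\Gamma_1})$ a translate of $\Gamma_1$ based at the origin, and the monomial $\bdx^a z^b$ accounts for translating it back. This reading is consistent with how the paper later invokes the corollary (``there exists an elementary polynomial $f_1$ in $\bk[[\bdx]][z]$ with Newton diagram parallel to $\Gamma_1$ and a polynomial $q$ such that $h=f_1q$''). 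Under that reading the proof is immediate from the Factorization Theorem plus your Minkowski argument, and in fact you do not need the extra monomial multiplication at all. As written, though, your final step asserts a divisibility of $h$ by $z^{n_{d+1}}$ that you neither justify nor could justify; this is the gap.
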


\subsection{On Newton maps and the Newton proccess }\label{sec-newton}
\mbox{}

We need to introduce more notations  in order to define \emph{Newton maps}. For every $1\leq i\leq d$, let 
\begin{equation}
\label{eq-gcd}
c_i:=\gcd (p,q_i),\quad
p_i:=\frac{p}{c_i},\text{ and } q'_i:=\frac{q_i}{c_i}.
\end{equation}
We will consider the following three maps. 
The first map depends both on the face $\Gamma_1$ and on the root $\mu_j$, see \eqref{eq-fgamma}.
Let 
 $(\mathbf{u},u) \in \bn^{d+1}$ be integers such that
 $1+\mathbf{u}\cdot\mathbf{q} =u p$. Let  $\delta _{\Gamma_1, j}$ be the map
$$
\begin{matrix} 
& \delta _{\Gamma_1, j}:&\bk[[\bdx]][z]& \longrightarrow & \bk[[\bdx]][z]\hfill\\
&&h(\bdx,z)&\mapsto  &h(\mu_j^{u_1} x_1,\dots ,\mu_j^{u_d} x_d,
z).
\end{matrix}
$$
Then
$$
\delta _{\Gamma_1, j}(z^p-\mu _j 
\bdx^{\mathbf{q}})=z^p-\mu_j^{up}
\bdx^{\mathbf{q}}.
$$
The second map $\epsilon _{\Gamma_1}$ depends only on the face $\Gamma_1$:
$$
\begin{matrix}
&\epsilon _{\Gamma_1 }:&\bk[[\bdx]][z]& \longrightarrow & \bk[[\bdy]][z_1]\hfill\\
&&h(\bdx,z)&\mapsto  &h(y_1^{p_1},\dots ,y_d^{p_d},
\bdy^{\mathbf{q'}}z_1).
\end{matrix}
$$
It is easily seen that 
$$
(\epsilon _{\Gamma_1}\circ\delta _{\Gamma_1, j})(z^p-\mu _j
\bdx^{\mathbf{q}})
=
\bdy^{p \mathbf{q'}} (z_1^p-\mu _j^{u p}).
$$

Now the third map $\tau_j$ will depend on the root $\mu _j$: 
$$
\begin{matrix}
&\tau _j:&\bk[[\bdy]][z_1]& \longrightarrow & \bk[[\bdy]][z_2]\hfill\\
&&h(\bdy,z_1)&\mapsto  &h(\bdy,z_2+\mu_j^u).
\end{matrix}
$$
Note that:
$$
(\tau _j\circ\epsilon _{\Gamma_1}\circ\delta _{\Gamma_1, j})(z^p-\mu _j \bdx^{\mathbf{q}})
=
\bdy^{\mathbf{p*q}}((z_2+\mu_j^u)^p-\mu _j^{u p})
$$ 
and the last factor is of order one in $z_2$. 
Note that $p_i q_i= p q'_i$ for $1\leq i\leq d$, i.e, $\mathbf{p*q}=p\mathbf{q'}$.

\begin{definition}\label{newton-map}
The \emph{Newton map} $\sigma _{\Gamma_1 ,j}$ associated with $\Gamma_1$ and the root $\mu_j$
is the composition map $
\sigma _{\Gamma_1 ,j}:= \tau _j\circ\epsilon _{\Gamma_1}\circ\delta _{\Gamma_1, j}:
\bk[[\bdx]][z] \to  \bk[[\bdy]][z_2]$ .

\end{definition}

Also the following statement was proved in \cite[Lemma 2.2]{aclm:09}.

\begin{lemma}\label{total-transf}
After the Newton map $\sigma _{\Gamma_1, j}$,  the total transform  $f_{\Gamma_1, j}:=\sigma _{\Gamma_1, j}(f)\in \bk[[\bdy]][z_2]$
of the polynomial $f$ can be written as 
$$
f_{\Gamma_1, j}(\bdy,z_2)=
\bdy^{m\mathbf{p*q}+\mathbf{n*p}} f_{1,\Gamma_1, j}(\bdy,z_2 ),
$$ 
where $f_{1,\Gamma_1, j}(\mathbf{0},z)$ is    regular 
of order $m_j$, and $m:=\sum_{j=1}^k m_j$.

Moreover by chain rule $$\frac{\partial(f\circ \sigma _{\Gamma_1, j})}{\partial z_2}(\bdy,z_2 ) =
\bdy^\mathbf{q'} \left(\frac{\partial f}{\partial z}\circ 
\sigma _{\Gamma_1, j}\right)(\bdy,z_2 ).$$
\end{lemma}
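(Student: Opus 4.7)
The plan is to trace each monomial of $f$ through the three stages $\delta_{\Gamma_1,j}$, $\epsilon_{\Gamma_1}$, $\tau_j$ of the Newton map, then identify the greatest common $\bdy$-monomial and compute the residue at $\bdy=\mathbf{0}$.

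First, I would observe that a monomial $c_{\balpha,\beta}\bdx^{\balpha}z^{\beta}$ of $f$ is sent to $c_{\balpha,\beta}\mu_j^{\mathbf{u}\cdot\balpha}\bdy^{\mathbf{p*\balpha}+\beta\mathbf{q'}}(z_2+\mu_j^u)^{\beta}$, so the $\bdy$-exponent vector is $\phi(\balpha,\beta):=\mathbf{p*\balpha}+\beta\mathbf{q'}$. The identity $p_iq_i=pq_i'$, recorded just before Definition~\ref{newton-map}, is exactly the statement that the direction $(\mathbf{q},-p)$ of $\Gamma_1$ lies in the common kernel of the coordinate functionals $L_i(\balpha,\beta):=p_i\alpha_i+q_i'\beta$, so $\phi$ is constant on $\Gamma_1$ with value $\phi(A)=\mathbf{p*n}+n\mathbf{q'}$; using $n=n_{d+1}+mp$ and $p\mathbf{q'}=\mathbf{p*q}$, this rewrites as the vector $m\mathbf{p*q}+\mathbf{n*p}$ appearing in the statement.

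The heart of the argument is then to show $\phi(\balpha,\beta)\geq\phi(A)$ componentwise for every $(\balpha,\beta)\in\supp(f)$, with equality exactly on $\Gamma_1$. Each $L_i$ has non-negative coefficients and vanishes on $(\mathbf{q},-p)$, while the $\nu$-quasi-ordinary hypothesis (unique $\nu$-proper face $\Gamma_1$, $1$-dimensional, not eliminable) forces the tangent cone to $\mathcal{N}_+(f)$ at the vertex $A$ to be generated by $(\mathbf{q},-p)$ together with the standard positive basis of $\mathbb{R}^{d+1}$. Hence $L_i\geq 0$ on this cone, vanishing only in the direction of $\Gamma_1$, and convexity of $\mathcal{N}_+(f)$ together with linearity of $L_i$ extends the inequality globally to $\supp(f)$. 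The main technical care lies here: checking that no other compact face of $\mathcal{N}(f)$ hosts a point with a strictly smaller value of some $L_i$, a fact that would fail if a second $\nu$-proper face were allowed.

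Once the factorization $\sigma_{\Gamma_1,j}(f)=\bdy^{\phi(A)}f_{1,\Gamma_1,j}(\bdy,z_2)$ is in hand, I would identify $f_{1,\Gamma_1,j}(\mathbf{0},z_2)$ by noting that only terms with $\phi=\phi(A)$ survive at $\bdy=\mathbf{0}$, namely the monomials of the initial form $f_{\Gamma_1}$. Applying $\sigma_{\Gamma_1,j}$ factor-by-factor to \eqref{eq-fgamma} via the explicit computation
$$\sigma_{\Gamma_1,j}(z^p-\mu_k\bdx^{\mathbf{q}})=\bdy^{\mathbf{p*q}}\bigl((z_2+\mu_j^u)^p-\mu_k\mu_j^{up-1}\bigr),$$
the factors with $k\neq j$ do not vanish at $z_2=0$ since $\mu_k\neq\mu_j$, while the $k=j$ factor $(z_2+\mu_j^u)^p-\mu_j^{up}$ vanishes simply at $z_2=0$ (its derivative there equals $p\mu_j^{u(p-1)}\neq0$), giving total order exactly $m_j$ in $z_2$. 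Finally, the chain-rule identity is immediate from $\sigma_{\Gamma_1,j}(z)=\bdy^{\mathbf{q'}}(z_2+\mu_j^u)$ together with the fact that each $\sigma_{\Gamma_1,j}(x_i)$ is independent of $z_2$.
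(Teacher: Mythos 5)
The paper states this lemma only by citation to [aclm:09, Lemma 2.2] and gives no proof, so there is no in-paper argument to compare against. Your reconstruction is a natural and essentially complete one: track each monomial through $\delta_{\Gamma_1,j}$, $\epsilon_{\Gamma_1}$, $\tau_j$, observe that the $\bdy$-exponent is the componentwise linear form $L_i(\balpha,\beta)=p_i\alpha_i+q_i'\beta$, show it is minimized over $\supp(f)$ exactly on $\Gamma_1$, and then read off the $z_2$-order by applying $\sigma_{\Gamma_1,j}$ factor-by-factor to the initial form \eqref{eq-fgamma}. The central geometric step (that the $\nu$-quasi-ordinary hypothesis confines the tangent cone of $\mathcal{N}_+(f)$ at $A$ to $\mathrm{cone}\{(\mathbf{q},-p),\mathbf{e}_1,\dots,\mathbf{e}_{d+1}\}$, on which every $L_i$ is non-negative and vanishes only along $(\mathbf{q},-p)$) is exactly right: the points of $\supp(f)$ with $\beta\geq n$ lie in $A+\mathbb{R}_+^{d+1}$, while those with $\beta<n$ are handled by the single-vertex condition on $\pi_A(\mathcal{N}_{<n}(f))$, which forces that projected region to be $v+\mathbb{R}_+^d$. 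The explicit evaluation of $\sigma_{\Gamma_1,j}(z^p-\mu_k\bdx^{\mathbf{q}})$ and the simple-zero computation at $k=j$ then give the regularity of order $m_j$, and the chain-rule claim is immediate since $\sigma_{\Gamma_1,j}(x_i)$ is $z_2$-independent.

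One small imprecision, which you in fact inherit from the lemma's own statement: $\phi(A)=\mathbf{p*\bdn}+n\mathbf{q'}$, and substituting $n=n_{d+1}+mp$ together with $p\mathbf{q'}=\mathbf{p*q}$ gives $\phi(A)=m\mathbf{p*q}+\mathbf{n*p}+n_{d+1}\mathbf{q'}$, not $m\mathbf{p*q}+\mathbf{n*p}$. The two agree precisely when $n_{d+1}=0$, i.e., when the lower vertex $A_1$ of $\Gamma_1$ lies on $\{\beta=0\}$; otherwise the extra monomial factor $\bdy^{n_{d+1}\mathbf{q'}}$ must also be pulled out for $f_{1,\Gamma_1,j}(\mathbf{0},z_2)$ to be nonzero. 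Since the lemma as printed asserts the same formula, your computation is consistent with what it claims; it would still be worth a sentence stating explicitly that you are dropping (or have absorbed) $n_{d+1}\mathbf{q'}$, rather than presenting the rewrite as an identity.
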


\subsubsection{Newton's process associated with the $\nu$-proper face $\Gamma_1 $}\label{new-pro-primera}
\mbox{}
\vspace{.5cm}

We can perform a change of variables of the type $z\mapsto z+h(\bdx)$ in order to have
a suitable system of coordinates for $f_{1,\Gamma_1, j}$; if $f_{1,\Gamma_1, j}$ is again $\nu$-quasi-ordinary
one can iterate the process until one gets either a monomial times a unit or a non $\nu$-quasi-ordinary polynomial.
This process is called Newton's process; note that $m_j<n$ because the face $\Gamma_1$ cannot be eliminated.

\subsubsection{Factorization and the Newton process associated with other compact 1-dimensional faces of the
Newton polyhedron}\label{new-pro-otras}
\mbox{}
\vspace{.5cm}

Since $f$ is a $\nu$-quasi-ordinary polynomial with $\nu$-proper face $\Gamma_1 $
then $\Gamma_1$ is a 
$1$-dimensional face of its Newton diagram $\mathcal{N}_+(f)$. We   
assume that  $\Gamma_1$ is the segment $[A,A_1]$. 
If the $z$-coordinate of $A_1$ is $n^1>0$, we denote
by $\mathcal{N}_{<n^1}(f)$ the set of points 
in $\mathcal{N}_+(f)$ 
whose $z$-coordinate is smaller than $n^1$.
Let $\pi _{A_1}:\mathcal{N}_{<n^1}(f)\setminus A_1 \to \mathbb{Q}^d $ be the projection into with center $A_1$
and  let
$\mathcal{N}^{0,1}(f)$ be the convex hull of the image by $\pi_{A_1}$  of $\mathcal{N}_{<n^1}(f)$. 
If $\mathcal{N} ^{0,1}(f)$ has only one vertex then there is another face $\Gamma_2$ 
of the Newton diagram $\mathcal{N_+}(f)$  which is of dimension $1$. 
We go further on this construction with $\Gamma_1=[A,A_1],\Gamma_2=[A_1,A_2],\cdots, \Gamma_s=[A_s,A_{s+1}]$ until
one of the following cases arises:
\begin{enumerate}
\enet{\rm(NW\arabic{enumi})} 
\item\label{nw1} The $z$-coordinate $n^{s+1}$ of $A_{s+1}$ is zero.
\item\label{nw2} $\mathcal{N}^{0,{s+1}}(f)=\emptyset$.
\item\label{nw3} $\mathcal{N}^{0,{s+1}}(f)$ has more than one vertex.
\end{enumerate}
Moreover $\Gamma_1 \cup \Gamma_2 \cup \cdots \cup \Gamma_{s}$ is a 
\emph{monotone polygonal path} in $\mathcal{N}(f)$.

\begin{lemma}
The Newton polyhedron of $f$ is not a 
\emph{monotone polygonal path} if and only if $\mathcal{N}^{0,{s+1}}(f)$ has more than one vertex, i.e., \ref{nw3} arises. 
\end{lemma}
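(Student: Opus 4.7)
My plan is to establish the equivalence by relating the combinatorial condition (NW3) to the existence of a compact face of $\mathcal{N}_+(f)$ of dimension at least two, which is precisely the obstruction to being a monotone polygonal path.

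For the direction ``not polygonal $\Rightarrow$ (NW3)'', assume $\mathcal{N}_+(f)$ contains a compact face $F$ of dimension $\geq 2$, and let $V$ be the vertex of $F$ with maximal $z$-coordinate. Since $f$ is $\nu$-quasi-ordinary, the unique $\nu$-proper face $\Gamma_1$ is one-dimensional, so $F$ cannot contain $A$; hence $V_z < n$. I consider two cases. If $V$ equals some $A_i$ on the Newton path, then $F$ has two distinct edges at $V$ going strictly downward (to other vertices of $F$ with smaller $z$-coordinate), which produce two distinct vertices in $\mathcal{N}^{0,i}(f)$ and thus trigger (NW3) at step $i$. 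If $V$ lies off the path, its $z$-coordinate is either equal to some $n^i$ with $V \neq A_i$, or it lies strictly between $n^{i+1}$ and $n^i$; in both sub-cases, the projection $\pi_{A_{i-1}}(V)$ (or $\pi_{A_i}(V)$) gives a vertex of the projected diagram distinct from that of the expected next path vertex---otherwise $V$ would lie on the ray from $A_{i-1}$ through $A_i$ (or from $A_i$ through $A_{i+1}$) strictly in the interior of the edge $\Gamma_i$ (or $\Gamma_{i+1}$), contradicting $V$ being a vertex of $\mathcal{N}_+(f)$.

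For the direction ``(NW3) $\Rightarrow$ not polygonal'', assume $\mathcal{N}^{0,s+1}(f)$ has at least two vertices. Because the recession cone of $\mathcal{N}_+(f)$ is $\mathbb{R}_{\geq 0}^{d+1}$, which contains no strictly downward direction, these vertices are projections of two distinct compact edges $e_1 = [A_{s+1}, Z_1]$ and $e_2 = [A_{s+1}, Z_2]$ emanating strictly downward from $A_{s+1}$. I would show that the smallest face of $\mathcal{N}_+(f)$ containing $e_1 \cup e_2$ is a compact face of dimension at least two; its existence is lattice-theoretic, but its compactness---the existence of a strictly positive linear form supporting it---requires a Stiemke-type duality argument ruling out any non-trivial real combination $x_1(Z_1 - A_{s+1}) + x_2(Z_2 - A_{s+1})$ in $\mathbb{R}_{\geq 0}^{d+1}$. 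This ruling-out uses the vertex property of $A_{s+1}$ together with the strictly positive form $\ell$ supporting the incoming edge $\Gamma_s$ of the Newton process, which gives $\ell \cdot (Z_j - A_{s+1}) > 0$ for both downward edges.

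The main obstacle I foresee is the compactness step in the second direction: demonstrating that the two downward edges, though meeting at a vertex, lie in a common compact face rather than an unbounded one. The hypothesis that $f$ is $\nu$-quasi-ordinary and that $A_{s+1}$ is obtained by a sequence of valid Newton steps provides a chain of positive supporting forms which, combined with the combinatorial structure of the normal fan at $A_{s+1}$, should suffice to rule out the degenerate configuration where the natural supporting plane for the triangle $[A_{s+1}, Z_1, Z_2]$ has some zero coordinate---the case that would yield only an unbounded face.
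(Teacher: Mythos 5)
The paper states this lemma without proof, so there is no proof of record to compare against; I can only assess your argument on its own merits. The overall geometric idea (relate (NW3) to the existence of a compact face of dimension $\geq 2$, passing through the tangent cone at $A_{s+1}$) is the natural one, and your sub-observation that any edge of $\mathcal{N}_+(f)$ emanating from a vertex in a $z$-decreasing direction is automatically bounded (because the recession cone is $\mathbb{R}_{\geq 0}^{d+1}$) is correct and useful. However, both implications as you have written them contain genuine gaps.

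In the direction ``(NW3) $\Rightarrow$ not polygonal,'' you correctly flag the compactness problem and propose to resolve it by a Stiemke-type argument showing that no nontrivial nonnegative combination $x_1(Z_1-A_{s+1})+x_2(Z_2-A_{s+1})$ lies in $\mathbb{R}_{\geq 0}^{d+1}$. That particular fact is true (the $z$-components are both negative, so the claim is immediate and needs no duality theorem), but it does not imply that the \emph{minimal face} $F$ of $\mathcal{N}_+(f)$ containing $e_1\cup e_2$ is compact: $F$ may be strictly larger than the cone spanned by the two edges at $A_{s+1}$, and its recession cone is governed by $N_{e_1}\cap N_{e_2}$ inside the normal fan, not by the span of $Z_1-A_{s+1}$ and $Z_2-A_{s+1}$. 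You need an argument that finds \emph{some} compact face of dimension two, not necessarily the minimal one containing both edges. A workable route is to observe that the vertices, edges, $\ldots$ of the projected polyhedron $\mathcal{N}^{0,s+1}(f)$ correspond to the $z$-decreasing faces of $\mathcal{N}_+(f)$ at $A_{s+1}$, and that a compact edge of $\mathcal{N}^{0,s+1}(f)$ lifts to a compact $2$-face; then use that the bounded faces of a polyhedron form a connected (indeed contractible) subcomplex containing all of its vertices, so two distinct vertices are joined by a chain of bounded edges, giving the required compact $2$-face. Your sketch, as written, would not survive a counterexample in which the minimal face containing the two edges is unbounded.

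In the direction ``not polygonal $\Rightarrow$ (NW3),'' your Case~1 (the top vertex $V$ of the bad face lies on the path) is essentially fine, but Case~2 (off the path) asserts without justification that $\pi_{A_{i-1}}(V)$ (or $\pi_{A_i}(V)$) is a \emph{vertex} of the projected diagram. A vertex of $\mathcal{N}_+(f)$ need not project to a vertex under the central projection $\pi_{A_i}$; vertices of $\mathcal{N}^{0,i}(f)$ correspond to extreme rays of the tangent cone $T_{A_i}$ pointing strictly downward, and you have not argued that the segment from $A_i$ toward $V$ is such an extreme ray. Without this, the ``distinct vertex'' you produce is not actually known to be a vertex of $\mathcal{N}^{0,i}(f)$, so (NW3) is not triggered. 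One way to repair this direction is to avoid chasing $V$ directly and instead argue inductively that, as long as each projection $\mathcal{N}^{0,\ell}(f)$ has a single vertex, the union $\Gamma_1\cup\cdots\cup\Gamma_s$ already accounts for every compact face of $\mathcal{N}_+(f)$, again using the correspondence between downward faces at $A_\ell$ and faces of the projected polyhedron.
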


For every  edge $\Gamma_\ell$ of $ \Gamma_2, \cdots, \Gamma_{s}$,
 the initial form  $f_{\Gamma_\ell }$ of $f$ can be written as in (\ref{eq-fgamma})
as follows:
\begin{equation}
\label{eq-fgamma-ell}
f_{\Gamma_\ell }=
a_{\Gamma_\ell }\bdx^{\mathbf{n^\ell}}z^{n^\ell_{d+1}}\prod _{j=1}^{k^\ell}(z^{p_\ell}-\mu _j^\ell 
\bdx^{\mathbf{q^\ell}})^{m_j^\ell},
\end{equation}
the factors being irreducible in $\bk[[\bdx]][z]$, 
i.e. $\gcd (
\mathbf{q^\ell},p_\ell)=1$, $\mu_j^\ell\in \bk^*$ with $\mu_j^\ell\ne \mu_i^\ell$ and $a_{\Gamma_\ell }\in \bk^*$.
For each root $\mu _j^\ell$
of its face polynomial $f_{\Gamma_\ell }$ one applies the corresponding Newton map $\sigma _{\Gamma_\ell, \mu _j^\ell}$. 
At each step, we encode the information given by the corresponding Newton diagram.
The process stops because the $z$-degree decreases since we are in a suitable system of coordinates. 
In next section Newton trees of a polynomial $f(\bdx,z)\in \bk[[\bdx]][z]$ are constructed by recursion 
on the number of steps of subsections~\S\ref{new-pro-primera} and~\S\ref{new-pro-otras}.

One can also apply recursively Factorization Theorem \ref{teorema1} 
to $h$, see \eqref{teorema1-factor}, to get  $k^\ell$ different
$\Gamma_\ell$-elementary polynomials $g_{\Gamma_\ell,j}\in \bk[[\bdx]][z]$, for $1\leq j \leq k^\ell$, 
such that $G^{\Gamma_\ell}:=\prod_{j=1}^{k^\ell} g_{\Gamma_\ell,j}$
divides~$f$, that is 
\begin{equation}\label{f-factor}
 f=H(\bdx,z)\prod_{t=1}^s G^{\Gamma_t}, \,\,\, \text{ for some }\,\,  H\in  \bk[[\bdx]][z].
\end{equation}

\section{The Newton tree of a polynomial}\label{sec-trees}

\subsection{Construction of the Newton tree}\label{constr-trees}

\mbox{}

Let $f(\bdx,z)\in \bk[[\bdx]][z]$ be a polynomial
and we assume that we are as in Notation \ref{inicial}. In this section we associate with 
$f(\bdx,z)$ a tree $\mathcal{T}(f)$ called 
\emph{Newton tree of $f$} because its first steps are  built using both its Newton diagram $\mathcal{N_+} (f)$ and 
the set $\mathcal{N}^0(f)$ of compact faces of $\pi(\mathcal{N}_{<n}(f))$. Further steps will be
 based on the Newton
process associated with  $f$ (see subsections \ref{new-pro-primera} and \ref{new-pro-otras}).

For a given~$f$, we are going to associate a tree $\mathcal{T}_\mathcal{N}(f)$,
called \emph{vertical tree}.
First, if $f$ is not in a suitable system of coordinates (see Definition~\ref{suitable}),
we perform a change of variables such that it is the case; in order to simplify
the notations we denote again by~$f$ the resulting polynomial. We keep the notations
of Definition~\ref{inicial2}. The tree $\mathcal{T}_\mathcal{N}(f)$ is
built using its Newton diagram $\mathcal{N}_+ (f)$.

We distinguish three cases.
\begin{caso0}\label{caso1a}
The Newton polyhedron $\mathcal{N} (f)$ consists only in one vertex (see Remark \ref{1-vertex}). 
Then the Newton tree $\mathcal{T}_{\mathcal{N}}(f)$ of $f$ is given in Figure~\ref{Q01}.
\end{caso0}

\begin{figure}[ht]
\centering
\subfigure[]{
\includegraphics[scale=1]{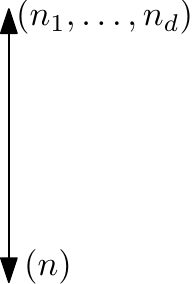}
\label{Q01}
}
\hfil
\subfigure[]{
\includegraphics[scale=1]{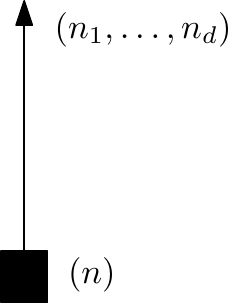}
\label{Q02}
}
\caption{}
\label{fig0}
\end{figure}

\begin{caso0}\label{caso2a}
The set $\mathcal{N}^0(f)$ of all compact faces of $\pi(\mathcal{N}_{<n}(f))$ has more than one vertex. 
Then the Newton tree $\mathcal{T}_{\mathcal{N}}(f)$ of $f$ is given in Figure~\ref{Q02}.  
\end{caso0}

\begin{caso0}\label{caso3a}
The set $\mathcal{N}^0(f)$ has exactly one 
vertex. 
Since we are in a suitable system of coordinates for $f$, 
the face $\Gamma_1$ cannot be eliminated, 
(i.e. $f$ is a $\nu$-quasi-ordinary polynomial). 
In such a case we are as in subsection~\ref{new-pro-otras}, and there exist 
$s$ compact edges $\Gamma_1:=[A,A_1],\Gamma_2:=[A_1,A_2],\cdots, \Gamma_s:=[A_s,A_{s+1}]$
of the Newton polyhedron $\mathcal{N}(f)$ 
until one of the cases~\ref{nw1},~\ref{nw2} or~\ref{nw3} happen.

Furthermore $\Gamma_1 \cup \Gamma_2 \cup \cdots \cup \Gamma_{s}$ is a 
\emph{monotone polygonal path} in $\mathcal{N}(f)$.
With this monotone polygonal path we associate  a decorated \textbf{vertical} graph $\mathcal{T}_{\mathcal{N}}(f)$ 
(which depends on $\mathcal{N}(f)$ in a suitable system of
coordinates for $f$). With each compact 1-dimensional face $\Gamma_\ell$ of the polygonal path,
we associate a vertex $v_\ell$. If two compact faces intersect at one point we draw a vertical edge from one vertex to the other.
Thus these vertices are drawn on a vertical line by the increasing order of the slopes, i.e.
$v_1, v_2,\ldots, v_s$
from above to below in order. Decorations of this \textbf{vertical line} $\mathcal{T}_{\mathcal{N}}(f)$ are as follows:

\begin{itemize}
\item On the top of the vertical line we add an arrow-head decorated with $\bdn:=(n_1,\cdots,n_d)$. 
\item On the bottom of the vertical line, we add: 
\begin{itemize}
\item an arrow-head decorated with $(n^{s+1})$ in cases~\ref{nw1} ($n^{s+1}=0$) and~\ref{nw2};
\item a black box decorated with $(n^{s+1})$ in case~\ref{nw3}.
\end{itemize}
\item for an edge $\Gamma_\ell$  which is defined in 
coordinates $(\balpha,\beta):=(\alpha_1,\ldots,\alpha_d,\beta)$ of $\mathbb{R}^{d+1}$ by the intersection of $d$ hyperplanes of equations
\begin{equation}\label{eq-edge}
p_\ell \alpha _k+q_k^\ell \beta =N_k^\ell,\,\qquad 1\leq k\leq d,\,\qquad \gcd (\mathbf{q}^\ell,p_\ell)=1,
\end{equation}
the corresponding vertex $v_\ell$ support the following decorations:
\begin{itemize}
\item The vertex itself is decorated with  $((N_1^\ell,\cdots, N_d^\ell))$.
\item The lower edge is decorated near $v_\ell$ with $p_\ell$.
\item The upper edge is decorated near $v_\ell$ with $(q_1^\ell,\cdots,q_d^\ell)$.
\end{itemize}
\end{itemize}
\end{caso0}

We describe now the construction of $\mathcal{T}(f)$. Recall that we assume
that $f$ is in suitable coordinates.

\begin{paso}
If $f$ is in either Case~\ref{caso1a} or~\ref{caso2a} then $\mathcal{T}(f):=\mathcal{T}_{\mathcal{N}}(f)$.
If $f$ is in Case~\ref{caso3a} we continue the process.
\end{paso}

\begin{paso}
For every 1-dimensional face $f_{\Gamma_\ell }$ and for each root $\mu _j^\ell$
of its face polynomial $f_{\Gamma_\ell }$ one applies 
the corresponding Newton map $\sigma _{\Gamma_\ell, \mu _j^\ell}$
to $f$ and get a polynomial $f_{\Gamma_\ell, j}$;
we perform a change of coordinates to be in suitable coordinates.
\end{paso}

\begin{paso}\label{paso3}
The tree $\mathcal{T}(f)$ will be obtained by gluing in a suitable way the
tree $\mathcal{T}_{\mathcal{N}}(f)$ and $\mathcal{T}(f_{\Gamma_\ell, j})$
which may be assumed constructed recursively. 
The tree $\mathcal{T}(f_{\Gamma_\ell, j})$ will be attached to the vertex $v_\ell$
of $\mathcal{T}_{\mathcal{N}}(f)$ by a \textbf{horizontal edge} which links $v_\ell$ to the top vertex of
 $\mathcal{T}_{\mathcal{N}}(f_{\Gamma_\ell, j})$ where the top arrow has been deleted. 
\end{paso}
 
Now to decorate the tree $\mathcal{T}(f)$ we need 
more definitions. Let $v$ be a vertex on $\mathcal{T}(f)$. If $v$ is on the first left vertical line, 
we say that $v$ has no preceding vertex. 
If $v$ is not on the first left vertical line, the vertical line on which $v$ lies, 
is linked by an horizontal edge ending on a vertex $v_1$, to a vertical line. 
Then $v_1$ is said to be the preceding vertex of $v$. Note that the path between $v_1$ and $v$ can have many vertical edges, but has exactly one horizontal edge. Now denote by $\mathcal{S}(v)=\{v_i,v_{i-1},\cdots, v_1,v_0=v\}$ where $v_j$ is the preceding vertex of $v_{j-1}$ for $j=1,\cdots,i$ and $v_i$ has no preceding vertex. We say that $\mathcal{S}(v) \setminus \{v\}$ is the set of preceding vertices of $v$. {Now in the contruction of the Newton tree $\mathcal{T}(f)$, we glue on 
$\mathcal{T}_{\mathcal{N}}(f)$ at $v_i$ a tree $\mathcal{T}(f_{\Gamma_i, l})$ 
where $\mathcal{S}(v)=\{v_{i-1},\cdots, v_1,v_0=v\}$. 
Assume the decorations of the edges attached to $v_i$ on $\mathcal{T}_{\mathcal{N}}(f)$ are 
$((q_1^i,\cdots, q_d^i),p_i)$.

We are going to decorate the edges attached to $v_0$ in $\mathcal{T}(f)$ with $((Q_1^0,\cdots,Q_d^0),p_0)$
following these rules:
\begin{itemize}
\item The decoration $p_0$ coincides with the corresponding decoration in the vertical tree containing~$v_0$.
\item Let us assume that the decorations of $v_j$ on $\mathcal{T}(f_{\Gamma_i, l})$ 
are $((Q_1^{j,i},\cdots,Q_d^{j,i}),p_j)$ for $j=0,\cdots,i-1$.
Then
\begin{equation}\label{eq-newton-tree-Q}
Q_k^0:=\frac{p_i q_k^{i} p_{i-1}^2\cdot\ldots\cdot p_1^2 p_0}
{\gcd(p_i,q_k^i)\cdot\gcd(p_{i-1},Q_k^{i-1,i})\cdot\ldots\cdot\gcd(p_1,Q_k^{1,i})}+Q_k^{0,i},\,\,\, k\in \{1,\cdots,d\}.
\end{equation}
Note that, in particular, $Q_k^i=q_k^i$.
\end{itemize}

It is useful to have another way for computing these decorations, see~\cite[(5.1)]{aclm:09}. Let us assume with the above notations that
the decorations $((Q_1^j,\cdots,Q_d^j),p_j)$ for $\mathcal{T}(f)$ have been defined for $v_j$, $j=0,1,\dots,i-1$.
Recall that $Q_k^i=q_k^i$. Let us denote $((q_1^0,\cdots, q_d^0),p_0)$ the decorations of $v_0$ in its vertical tree.

\begin{lemma}\label{lema-newton-tree-Q}
With the above notations
$$
Q_k^{0}=\frac{p_1 Q_k^1 p_0}{\gcd(p_1,Q_k^1)}+q_k^{0}.
$$
\end{lemma}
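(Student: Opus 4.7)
The plan is to argue by induction on $i$, the length of the chain $\mathcal{S}(v_0)$ of preceding vertices of $v_0$.

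For the base case $i=1$, the vertex $v_1$ lies on the first vertical line of $\mathcal{T}(f)$, so its decoration in $\mathcal{T}(f)$ coincides with its local one, $Q_k^1 = q_k^1$. Moreover, inside the sub-tree $\mathcal{T}(f_{\Gamma_1, l})$ attached at $v_1$, the vertex $v_0$ has no preceding vertex (its vertical line is the topmost one of the sub-tree, the one that was linked to $v_1$ by the horizontal edge in Step~\ref{paso3}); hence $Q_k^{0,1} = q_k^0$. Substituting in \eqref{eq-newton-tree-Q} with $i=1$ gives directly the formula of the lemma.

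For the inductive step $i\geq 2$, I would compare the expressions that \eqref{eq-newton-tree-Q} produces for $v_0$ (with chain $v_1,\ldots,v_i$) and for $v_1$ (with chain $v_2,\ldots,v_i$). Dividing the two big fractions, almost everything telescopes and one obtains
$$
Q_k^0-Q_k^{0,i}\;=\;\frac{p_1\, p_0\,(Q_k^1-Q_k^{1,i})}{\gcd(p_1,Q_k^{1,i})}.
$$
Combining this with the inductive hypothesis applied to $v_0$ \emph{inside} the sub-tree $\mathcal{T}(f_{\Gamma_i, l})$ (where the chain has length $i-1<i$), which reads
$$
Q_k^{0,i}-q_k^0\;=\;\frac{p_1\, Q_k^{1,i}\, p_0}{\gcd(p_1,Q_k^{1,i})},
$$
addition gives
$$
Q_k^0-q_k^0\;=\;\frac{p_1\, p_0\, Q_k^1}{\gcd(p_1,Q_k^{1,i})}.
$$

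The main (and only delicate) remaining step is the arithmetic identity $\gcd(p_1,Q_k^{1,i})=\gcd(p_1,Q_k^1)$, which is what I expect to be the genuine obstacle. It follows, however, directly from \eqref{eq-newton-tree-Q} applied to $v_1$: the difference $Q_k^1-Q_k^{1,i}$ is expressed as a single fraction whose numerator contains the factor $p_1$, while its denominator is a product of terms of the form $\gcd(p_j,\cdot)$ with $j\geq 2$, and therefore contributes no $p_1$-factor. Hence $p_1\mid Q_k^1-Q_k^{1,i}$, so $Q_k^1\equiv Q_k^{1,i}\pmod{p_1}$, and the two $\gcd$'s agree. Substituting back closes the induction and yields the formula of the lemma.
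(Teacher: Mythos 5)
Your argument is correct and matches the paper's own proof in structure: the same induction on $i$, the same base case $i=1$ (observing $Q_k^{0,1}=q_k^0$ and $Q_k^1=q_k^1$), and the same algebraic manipulation of \eqref{eq-newton-tree-Q} in the inductive step --- your telescoping subtraction $Q_k^0-Q_k^{0,i}=\frac{p_1p_0(Q_k^1-Q_k^{1,i})}{\gcd(p_1,Q_k^{1,i})}$ is exactly the paper's factoring out of $\frac{p_1p_0}{\gcd(p_1,Q_k^{1,i})}$, combined with the inductive hypothesis on $Q_k^{0,i}$. You actually go a step further than the paper: you correctly isolate, as the one genuinely delicate point, the arithmetic identity $\gcd(p_1,Q_k^{1,i})=\gcd(p_1,Q_k^1)$, which the paper passes over with ``and the result follows,'' and you supply a justification for it. One small caveat on the phrasing of that justification: saying the denominator of $Q_k^1-Q_k^{1,i}$ ``contributes no $p_1$-factor'' because it only involves $\gcd(p_j,\cdot)$ with $j\ge 2$ is not quite a sound argument on its own, since $p_1$ could a priori share prime factors with the $p_j$ for $j\ge 2$. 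The clean way to see $p_1\mid Q_k^1-Q_k^{1,i}$ is that each factor $\gcd(p_j,Q_k^{j,i})$ of the denominator divides $p_j$, so the entire denominator $\gcd(p_i,q_k^i)\gcd(p_{i-1},Q_k^{i-1,i})\cdots\gcd(p_2,Q_k^{2,i})$ divides $p_i q_k^i p_{i-1}^2\cdots p_2^2$; hence $Q_k^1-Q_k^{1,i}$ is $p_1$ times an integer, giving $Q_k^1\equiv Q_k^{1,i}\pmod{p_1}$ and the desired equality of $\gcd$'s.
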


\begin{proof}
Let us assume first that $i=1$. In this case $Q_k^{0,1}=q_k^0$ since 
$\mathcal{T}(f_{\Gamma_1, l})=\mathcal{T}_{\mathcal{N}}(f_{\Gamma_1, l})$ and  $Q_k^1=q_k^1$
since $i=1$. The result follows from a direct substitution in~\eqref{eq-newton-tree-Q}.

If $i>1$ then, by induction hypothesis, we have
$
Q_k^{0,i}=\frac{p_1 Q_k^{1,i} p_0}{\gcd(p_1,Q_k^{1,i})}+q_k^{0}
$.
Hence,
\begin{equation*}
\begin{split}
Q_k^0=&\frac{p_i q_k^{i} p_{i-1}^2\cdot\ldots\cdot p_1^2 p_0}
{\gcd(p_i,q_k^i)\cdot\gcd(p_{i-1},Q_k^{i-1,i})\cdot\ldots\cdot\gcd(p_1,Q_k^{1,i})}+Q_k^{0,i}\\
=& \frac{p_i q_k^{i} p_{i-1}^2\cdot\ldots\cdot p_1^2 p_0}
{\gcd(p_i,q_k^i)\cdot\gcd(p_{i-1},Q_k^{i-1,i})\cdot\ldots\cdot\gcd(p_1,Q_k^{1,i})}
+\frac{p_1 Q_k^{1,i} p_0}{\gcd(p_1,Q_k^{1,i})}+q_k^{0}\\
=&\left(\frac{p_i q_k^{i} p_{i-1}^2\cdot\ldots\cdot p_2^2 p_1}
{\gcd(p_i,q_k^i)\cdot\gcd(p_{i-1},Q_k^{i-1,i})\cdot\ldots\cdot\gcd(p_1,Q_k^{1,i})}
+Q_k^{1,i}\right)\frac{p_1 p_0}{\gcd(p_1,Q_k^{1,i})}+q_k^{0}.
\end{split}
\end{equation*}
Using \eqref{eq-newton-tree-Q}
$$
Q_k^1=\frac{p_i q_k^{i} p_{i-1}^2\cdot\ldots\cdot p_2^2 p_1}
{\gcd(p_i,q_k^i)\cdot\gcd(p_{i-1},Q_k^{i-1,i})\cdot\ldots\cdot\gcd(p_1,Q_k^{1,i})}
+Q_k^{1,i}
$$
and the result follows.
\end{proof}

\begin{remark}\label{vert-hori}
By construction the Newton tree $\mathcal{T}(f)$ has  \textbf{vertical parts} and \textbf{horizontal parts}:
vertical parts correspond to Newton diagrams of total transforms by Newton maps and horizontal edges 
are edges used for connecting vertical parts. 
\end{remark}

\begin{definition}\label{ends}
An \emph{end} of the Newton tree $\mathcal{T}(f)$ is either an arrow-head or a  black box. The arrow-heads or black boxes
decorated with~$(0)$ will be called \emph{dead ends}.  
\end{definition}

\begin{remark}\label{end-arrows}
By construction black boxes in the Newton tree $\mathcal{T}(f)$ appear if and only if 
we reach at some step Case~\ref{caso2a}. 
This means that the Newton tree $\mathcal{T}(f)$ ends with arrow-heads if and only if 
we never reach Case~\ref{caso2a}.
\end{remark}

\begin{definition}\label{gl-data}
For every vertex $v_j$ of the Newton tree $\mathcal{T}(f)$, the
\emph{global numerical data} of the vertex $v_j$ is 
$(\mathbf{N}_{v_j},p_j)$ where $\mathbf{N}_{v_j}:=(N^j_{1},\cdots ,N^j_{d})$.
\end{definition}
\begin{remark}

Global numerical data 
of the first vertex satisfy the following property. If 
$\sigma$ is a Newton map associated with this vertex, then 
$$
\sigma (f)=\bdy^{\mathbf{\frac{N}{c}}}
f_1(\bdy, z_1)
$$
where no $y_i$ divides $f_1$ and $c_i=\gcd(p,q_i)$ as in \eqref{eq-gcd}.
\end{remark}

\begin{definition}\label{lc-data}
The numerical data
$((Q^j_{1},\cdots,Q^j_{d}),p_j)$
associated with each vertex $v_j$ of the Newton tree $\mathcal{T}(f)$ 
are called the \emph{local numerical data} of the vertex $v_j$. 
The \emph{gcd of the vertex}~$v_j$ is $\mathbf{c}:=(\gcd(Q^j_{1},p_j),\dots,\gcd(Q^j_{d},p_j))$.
\end{definition}

\begin{remark}\label{rem-growth}
It is easy to show that, 
if $((Q_1,\cdots,Q_{d}),p)$ are the local numerical data
of a vertex~$v$ and $((Q'_{1},\cdots,Q'_{d}),p_j)$ are the local numerical data 
of a vertex $v'$ such that $v$ is the preceding vertex of $v'$.
Then, applying Lemma~\ref{lema-newton-tree-Q}, the following strict inequalities hold:
$$
Q'_i>\frac{p Q_i }{\gcd(p,Q_i)} p_j,  \,\,\text{for}\,\,
i=1,\cdots, d, \,\,\text{and}\,\, j=1,\cdots, r.
$$
This condition is called \emph{the growth condition on the local numerical data}.
\end{remark}

\begin{remark}

In the case for  $d=1$,  and if we forget about vertical and horizontal edges the Newton trees 
decorated with local numerical data are the Eisenbud-Neumann diagrams of the corresponding germ defined in \cite{en:85}.

\end{remark}

\begin{definition}
The \emph{valency} of a vertex in a Newton tree is the number of edges attached to the vertex. 
\end{definition}

\subsection{Comparable polynomials and coloured Newton trees}

\mbox{}

Let $f(\bdx,z), g(\bdx,z)\in \bk[[\bdx]][z]$ be polynomials. 
We assume that we are as in Notation~\ref{inicial} and Definition~\ref{inicial2}.

\begin{definition}\label{def-comp}
Two polynomials $f$ and $g$ in $\bk[[\bdx]][z]$ are called \emph{comparable} if 
their resultant $\res_z(f,g)(\bdx)$ of
$f$ and $g$ with respect to $z$ is equal to a monomial times a unit, that is,
$$
\res_z(f,g)(\bdx)=\bdx^\bdn u(\bdx), \,\,\textrm{ with  }\,\, \,\,\bdn\in{\mathbb{N}}^d,\,\,u(\bdx)\in \bk[[\bdx]],  \textrm{ and  }
\,\, u(\textbf{0} )\ne 0.
$$

\end{definition}

\subsubsection{Coloured Newton trees} 
\mbox{}
\vspace{.5cm}

\emph{Coloured Newton trees} are associated with the product of two
polynomials $f(\bdx,z), g(\bdx,z)\in \bk[[\bdx]][z]$. We assume $fg$ is in suitable coordinates. Take the Newton tree $\mathcal{T}(fg)$ 
of the product $f g=f(\bdx,z)\cdot  g(\bdx,z)$ 
and add two colours to it, say red and blue.
Blue colour  is associated with $f$ and red colour with $g$. 
The coloured Newton tree $\mathcal{T}(fg)$  can have blue parts, red parts and blue-red parts.

\begin{definition}\label{coloured}
We consider two polynomials 
$f(\bdx,z):=\bdx^\bdn  f_1(\bdx,z),g(\bdx,z):=\bdx^\bdm  g_1(\bdx,z)\in \bk[[\bdx]][z]$  such that 
 $f_1(\bdx,z)$ and $g_1(\bdx,z)$ are regular 
of order say $m,n\geq 0$. In such a case the product 
$$
f_1 g_1(\textbf{0},z)=a_0z^{m+n}+\cdots 
$$
satisfies $a_0 \neq 0$. In this case  $A:=(\mathbf{0},m+n)\in \mathcal {N}_+ (f _1g_1)$ and
 let $\pi$ be the projection into
$\bq ^d$ with centre $A$ as before and we consider $\mathcal {N}_0 (f_1 g_1)$ as in Definition~\ref{inicial2}. 
We consider the Newton diagram  
$\mathcal {N}_+(f g)=\mathcal {N}_+(\bdx^{\bdn+\bdm})+\mathcal {N}_+(f_1 g_1)$ 
which is used to construct the first vertical part $\mathcal{T}_\mathcal{N}(fg)$ of the Newton tree $\mathcal{T}(fg)$ 
of $f g$. Three possible cases may arise either 

\begin{enumerate}
 \item $\mathcal{T}_\mathcal{N}(fg)$ is 
blue coloured if $\deg g_1(\textbf{0},z)=0$ or, 
\item $\mathcal{T}_\mathcal{N}(fg)$ is 
red coloured if $\deg f_1(\textbf{0},z)=0$ or, 
\item $\mathcal{T}_\mathcal{N}(fg)$ is bi-coloured blue-red otherwise.
\end{enumerate}

We apply the same rule for every  steps of the Newton process.
In particular every vertical line in the Newton tree $\mathcal{T}(fg)$  of $f g$ has the same  (bi)color.
Bicoloured vertices of the bicoloured Newton tree $\mathcal{T}(fg)$ will be called \emph{common vertices} of $f$ and $g$.
\end{definition}

\begin{example}

The bi-coloured Newton tree $\mathcal{T}(fg)$ of $f g$ where $f=z^2-x^3$ and $g=z^3-x^2$ is as in Figure~\ref{QOnov1} where all vertical lines are bi-coloured, the above horizontal line is red and the below one is blue.

\begin{figure}[ht]
\begin{center}
\includegraphics[scale=1]{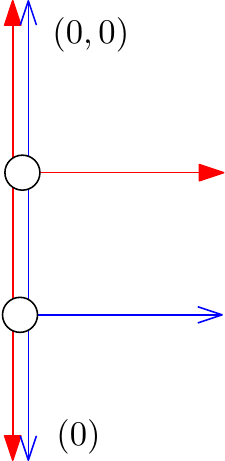}
\caption{}
\label{QOnov1}
\end{center}
\end{figure}
\end{example}

\begin{definition}\label{separated}
Two  polynomials $f$ and $g$ in $\bk[[\bdx]][z]$ 
are called \emph{separated} if there exists a suitable system of coordinates for $fg$ such that all  ends which are not dead-ends of the bi-coloured Newton tree 
$\mathcal{T}(fg)$  are either red coloured or blue coloured, see Definition~\ref{ends}. 
\end{definition}

\begin{remark}
If $d=1$, $f$ and $g$ are separated if they do not share a common component.
\end{remark}

\begin{definition}\label{separated-vertex}
Let $f,g\in\bk[[\bdx]][z]$ such that the system of coordinates is suitable for $fg$.
Let $v$ a vertex of the Newton tree 
$\mathcal{T}(fg)$ corresponding to an edge $\Gamma$. Let $\hat {f}_\Gamma$ obtained from $f_\Gamma$ by taking away the powers of $x_i$ and $z$.
We say that $f$ and $g$ \emph{separate at $v$} if $\gcd(\hat{f}_\Gamma,\hat{g}_\Gamma)$ is not equal to 
either $\hat{f}_\Gamma$ or $\hat{g}_\Gamma$.
The \emph{order of separation} of $f$ at $v$ is the 
$z$-degree of $\frac{\hat{f}_\Gamma}{\gcd(\hat{f}_\Gamma,\hat{g}_\Gamma)}$.
\end{definition}

\begin{example}
We illustrate the two kinds of separation of $f$ and $g$. In Figure~\ref{QO44nna}
after two Newton maps the total transform of $fg$, see Lemma~\ref{total-transf}, has
 a Newton polyhedron with two different compact 1-dimensional faces 
and each of them corresponds either to (the total transform of) $f$ and the other to (the total transform of) 
$g$. In Figure~\ref{QO45nna} they have different polynomials on the same
face, see \eqref{eq-fgamma}.

\begin{figure}[ht]
\centering
\subfigure[]{
\includegraphics[scale=1]{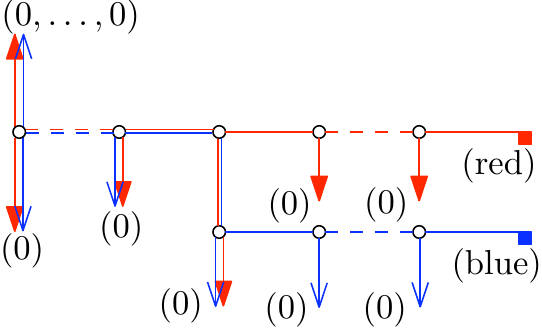}
\label{QO44nna}
}
\hfil
\subfigure[]{
\includegraphics[scale=1]{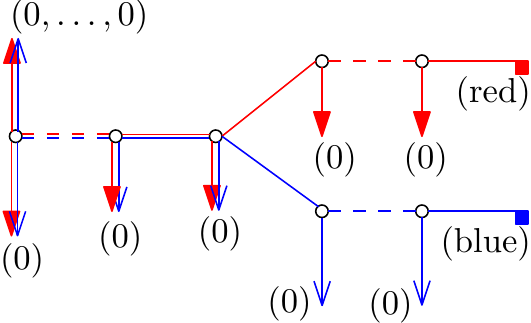}
\label{QO45nna}
}
\caption{}
\label{fig4}
\end{figure}
\end{example}

\begin{remark}\label{vertical-separated}
If two  polynomials $f,g\in\bk[[\bdx]][z]$ 
are \emph{separated} then there is at least a vertical line in  the bi-coloured Newton tree $\mathcal{T}(fg)$ 
which corresponds 
to a Newton polygon of a step of its Newton process, after which its Newton tree $\mathcal{T}(fg)$ is not bi-coloured,
e.g see Figures~\ref{QOnov1},~\ref{QO44nna} or~\ref{QO45nna}.   
\end{remark}

\begin{remark}
Let $f,g\in\bk[[\bdx]][z]$ be  two polynomials. Assume there exists a suitable system of coordinates of each of them.  For a common vertex of the bicoloured Newton tree $\mathcal{T}(fg)$ 
its local numerical data is the same as its local numerical data as vertex of $\mathcal{T}(f)$ and the same as
its local numerical data as vertex of $\mathcal{T}(g)$.
\end{remark}

Next theorem is one of the main results in~\cite{aclm:09}.

\begin{theorem}[{\cite[Theorem 1.6]{aclm:09}}]\label{thm-sepcom}
If two  polynomials $f$ and $g$ in  $\bk[[\bdx]][z]$ are separated 
then they are comparable, i.e. their resultant $\res_z(f,g)=\bdx^{\balpha} \eta(\bdx)$
 with $\eta(\bdx)\in \bk[[\bdx]]$, 
$\balpha\in \bn^d$ and $\eta(\textbf{0})\ne 0$.
\end{theorem}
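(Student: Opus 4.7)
My plan is to argue by induction, using the multiplicativity of the resultant in each argument together with the Factorization Theorem~\ref{teorema1} and the behaviour of the Newton process. By Corollary~\ref{cor-factor-gamma}, I can write $f = \bdx^{\mathbf{a}} z^{b} \prod_i f_i$ and $g = \bdx^{\mathbf{a}'} z^{b'} \prod_j g_j$, where each $f_i$ is $\Gamma$-elementary for some edge $\Gamma$ of the polygonal path of $\mathcal{N}(f)$ (and analogously for $g_j$). The monomial prefactors $\bdx^{\mathbf{a}}\bdx^{\mathbf{a}'}$ and the $z$-prefactors contribute only a monomial-times-unit to the resultant (the latter because the roots of $f,g$ all vanish at the origin), so it suffices to verify that each pair $\res_z(f_i,g_j)$ has the desired form. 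Since each pair $(f_i,g_j)$ inherits its position in the bi-coloured tree $\mathcal{T}(fg)$ from Definition~\ref{coloured}, the separation hypothesis lifts to each pair.

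The induction is on the $z$-degree of $f_i g_j$. For a pair $(f_i,g_j)$, three situations may occur: (i)~their first edges $\Gamma^{f_i}_1$, $\Gamma^{g_j}_1$ have different slopes $(p,\mathbf{q})$; (ii)~the edges coincide but the associated roots $\mu_{f_i}\neq \mu_{g_j}$ differ; or (iii)~both edge and root coincide, so that $f_i$ and $g_j$ share a common vertex of $\mathcal{T}(fg)$. In cases (i)--(ii), the initial forms are coprime in $\bk[[\bdx]][z]$, with distinct dominant terms, so the resultant of the initial forms is a nonzero monomial in $\bdx$; the higher-order terms of $f_i$ and $g_j$, being supported on translates of the same single edge (elementary), only contribute unit perturbations, giving $\res_z(f_i,g_j)=\bdx^\bbeta\cdot(\text{unit})$. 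In case (iii), I apply the common Newton map $\sigma_{\Gamma_1,\mu}$ to both $f_i$ and $g_j$: by Lemma~\ref{total-transf} the transforms $\tilde f_i, \tilde g_j$ have strictly smaller $z$-degree (since $m_j<n$, as $\Gamma_1$ cannot be eliminated), and the construction of $\mathcal{T}(fg)$ in Step~\ref{paso3} of~\S\ref{constr-trees} shows that the Newton sub-tree attached at the common vertex is precisely $\mathcal{T}(\tilde f_i \tilde g_j)$ and remains separated. The inductive hypothesis applies to $\res_{z_2}(\tilde f_i,\tilde g_j)$, and it only remains to relate this to $\res_z(f_i, g_j)$.

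The main obstacle is precisely this last step: tracking how the resultant transforms under a Newton map. The Newton map $\sigma_{\Gamma_1,\mu}$ factors as $\tau_j\circ\epsilon_{\Gamma_1}\circ\delta_{\Gamma_1,j}$; the translation $\tau_j$ and the weight-twist $\delta_{\Gamma_1,j}$ preserve resultants (up to unit rescaling), but the monomial substitution $\epsilon_{\Gamma_1}:x_i\mapsto y_i^{p_i}$, $z\mapsto \bdy^{\mathbf{q}'}z_1$ is the delicate part. Writing both polynomials as products of roots in a Puiseux extension of $\bk((\bdx))$, the substitution multiplies each root by $\bdy^{\mathbf{q}'}$ and introduces a common $\bdy^{\mathbf{q}'}$ factor in each root difference, yielding an identity of the shape $\res_z(f_i,g_j)(\bdy^{\mathbf{p}}) = \bdy^{\mathbf{N}}\cdot\res_{z_1}(\epsilon_{\Gamma_1}(f_i),\epsilon_{\Gamma_1}(g_j))$ for an explicit exponent $\mathbf{N}$ depending on the numerical data of $\Gamma_1$ and on $\deg_z f_i\cdot\deg_z g_j$. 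Since $\bdy\mapsto\bdx^{1/\mathbf{p}}$ is a finite map, a monomial-times-unit in $\bdy$ pulls back to a monomial-times-unit in $\bdx$, closing the induction.
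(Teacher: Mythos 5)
The paper does not actually prove this theorem; it quotes it from~\cite[Theorem~1.6]{aclm:09}, so there is no in-paper proof against which to compare. Assessed on its own, your strategy — reduce to elementary factors, compare initial forms, recurse via Newton maps and track the resultant's transformation — is the right kind of argument and is consistent with the machinery the paper does develop (Proposition~\ref{prop-sep} and the cited Lemmas~5.7, 5.10 of~\cite{aclm:09}). The observation that $\epsilon_{\Gamma_1}$ scales all root differences by $\bdy^{\mathbf{q'}}$, so that $\res_{z_1}(\epsilon f,\epsilon g)=\bdy^{\mathbf{q'}\deg_z f\cdot\deg_z g}\res_z(f,g)(\bdy^{\mathbf{p}})$, and that a finite monomial substitution preserves ``monomial times unit,'' is exactly the correct technical core.

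However, the opening decomposition step is a genuine gap. You invoke Corollary~\ref{cor-factor-gamma} to write $f=\bdx^{\mathbf{a}}z^b\prod_i f_i$ with every $f_i$ $\Gamma$-elementary and ``$\Gamma$ an edge of the polygonal path of $\mathcal{N}(f)$,'' and similarly for $g$. Corollary~\ref{cor-factor-gamma} only extracts one elementary factor at a time; iterating it requires each successive cofactor to remain $\nu$-quasi-ordinary, i.e.\ it requires $\mathcal{T}(f)$ and $\mathcal{T}(g)$ to have no black boxes. But the hypothesis is only that $f$ and $g$ are \emph{separated}: the bicoloured tree $\mathcal{T}(fg)$ must have monochrome (or dead) ends, and nothing forbids a \emph{monochrome} black box, which corresponds precisely to a non-polygonal subtree of one of $f,g$. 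So the full elementary factorization you start from need not exist. The repair is to follow the Newton process for $fg$ (not $f$ and $g$ separately), applying Factorization Theorem~\ref{teorema1} to the product and stopping at the vertices where the colours separate; past a monochrome black box the other factor has $z$-degree zero, so the resultant contribution there is automatically a unit. Relatedly, your step ``the higher-order terms of $f_i$ and $g_j$, being supported on translates of the same single edge, only contribute unit perturbations'' in cases (i)--(ii) is asserted rather than proved; it is the content of the lemmas on resultants of elementary polynomials with coprime face polynomials (cf.\ \cite[Lemma~5.7,~5.10]{aclm:09}), and should be invoked or argued via the Newton polyhedron of the resultant, since it is not automatic in several variables.
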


\subsection{Computation of resultants}
\mbox{}

Let $f$ and $g$ be two  Weierstrass  polynomials  which are  comparable  polynomials and
such that their Newton trees have only one end which is not a dead-end. The main result in this section is to show
that the resultant $\res_z(f,g)$
can be read from the coloured Newton tree of $f g$ decorated with its local numerical data. 
This result can be seen as a generalization of \cite[Corollaire 5]{pgp:00} 
where  P.~Gonz{\'a}lez-P{\'e}rez gave information about the Newton diagram of the
resultant of two quasi-ordinary hypersurfaces satisfying an
appropriate non-degeneracy  condition.

Let $f$ and $g$ be two  Weierstrass polynomials  in $\bk[[\bdx]][z]$
and asume we are as in Notation~\ref{inicial} and  the system of coordinates is suitable for both 
$f$ and $g$.
We consider the coloured Newton tree $\mathcal{T}(fg)$  of $f g$ decorated with its local numerical data. 
We recall the following result 
\cite{aclm:09} where it was shown that to compute their resultant $\res_z(f,g)$ 
one can use the factorization of total transforms of $f$ and $g$ 
after Newton process
even if they are not  factorizations of $f$ and $g$.

\begin{proposition}[{\cite[Proposition 5.11]{aclm:09}}]\label{prop-sep}
Let $f,g\in \bk[[\bdx]][z]$ be two  Weierstrass  polynomials in suitable coordinates which are comparable  and
such that their Newton trees $\mathcal{T}(f)$ and $\mathcal{T}(g)$ 
have only one end which is not a dead-end. Then, $\res_z(f,g)$
can be read from the coloured Newton tree $\mathcal{T}(fg)$
of $f g$ decorated with its local numerical data.

More precisely, for every $1\leq i\leq d$ the multiplicity of $x_i$ in $\res_z(f,g)$ is computed as follows:
\begin{enumerate}
\item Consider the path from the blue-end representing
$f$ to the red-end representing $g$.
\item Take the product of all the decorations that are
adjacent to the path (the $i^{\rm\text{th}}$ coordinate in the case of $d$-uples), including
decorations  of these  ends.
\item Multiply it by the the $i^{\text{th}}$ coordinate of the $\gcd$ of the vertices before the vertex where they
separate, see Definition{\rm~\ref{lc-data}}.
\end{enumerate}
\end{proposition}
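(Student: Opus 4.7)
The plan is to prove the formula by induction on the number of common vertices of $f$ and $g$ in the bicoloured Newton tree $\mathcal{T}(fg)$, that is, on the length of the portion of the path from the blue-end to the red-end lying above the separating vertex. The hypothesis that $\mathcal{T}(f)$ and $\mathcal{T}(g)$ each have exactly one non-dead-end implies that, modulo dead-end branches (which correspond to pure powers of the variables and only contribute monomial factors already accounted for by the Weierstrass assumption), the $f$- and $g$-parts of $\mathcal{T}(fg)$ are single paths joined at a unique separating vertex.

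For the base case, $f$ and $g$ separate at the very first vertex $v$ of $\mathcal{T}(fg)$. By Proposition~\ref{factor-cara-2} one factors the initial forms over $\bk((\bdx))$ and computes directly
$$
\res_z\bigl((z^p-\mu_f\bdx^{\bdq})^{m_f},(z^p-\mu_g\bdx^{\bdq})^{m_g}\bigr)
=(\mu_f-\mu_g)^{p m_f m_g}\bdx^{p m_f m_g\bdq},
$$
so the $x_i$-multiplicity of $\res_z(f,g)$ equals $m_f m_g\, p\, q_i$. This matches the tree formula: the path visits only $v$, the adjacent decorations are $m_f$ and $m_g$ (the ends), $p$ (lower edge), and $q_i$ ($i$-th coordinate of the upper edge), with no $\gcd$-factor before $v$.

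For the inductive step, suppose $f$ and $g$ share the first vertex $v_1$ of $\mathcal{T}(fg)$ (same face $\Gamma_1$, same root $\mu$). Apply the Newton map $\sigma:=\sigma_{\Gamma_1,\mu}$. By Lemma~\ref{total-transf}, $\sigma(f)=\bdy^{\alpha_f}\tilde f$ and $\sigma(g)=\bdy^{\alpha_g}\tilde g$ where $\tilde f,\tilde g$ are Weierstrass in $z_2$, and the bicoloured Newton tree of $\tilde f\tilde g$ is $\mathcal{T}(fg)$ with its first vertical part removed. A direct substitution in the product-over-roots expression for resultants yields an identity of the form
$$
\res_z(f,g)(y_1^{p_1},\dots,y_d^{p_d})=\bdy^{E}\cdot\res_{z_2}(\tilde f,\tilde g)(\bdy),
$$
where $E$ depends on $\bdq'$, $p$, the $z$-degrees $n_f,n_g$, and the exponents $\alpha_f,\alpha_g$; its $y_i$-component records precisely the decorations adjacent to the path at $v_1$ coming from the first vertical part.

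The combinatorial heart of the proof is to show that the induction hypothesis applied to $\tilde f,\tilde g$, combined with the monomial factor $\bdy^{E}$ and the ramified substitution $x_i=y_i^{p_i}$, reproduces the stated path formula for $\res_z(f,g)$. The key identity here is Lemma~\ref{lema-newton-tree-Q}, which relates the local numerical data of vertices along the path in the truncated tree to those in $\mathcal{T}(fg)$, together with the recursion~\eqref{eq-newton-tree-Q}. The main obstacle is the detailed bookkeeping of monomial exponents: one must verify that the $\gcd(p_1,q_i^1)$-factors arising from the ramification $x_i=y_i^{p_i}$ interact with the $\gcd$-denominators in~\eqref{eq-newton-tree-Q} in such a way that the ``$\gcd$ of the vertex before separation'' factor in the statement emerges with the correct exponent, and that no double-counting occurs along the shared segment of the path.
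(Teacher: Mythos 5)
The paper itself does not prove Proposition~\ref{prop-sep}: it is stated as a recalled result and attributed to \cite[Proposition~5.11]{aclm:09}. So there is no in-paper proof to compare against; I can only assess your proposal on its own. Your overall strategy --- induct along the Newton process, peel off one Newton map at a time, and compute a face-level resultant at the point of separation --- is the natural one, and is presumably close in spirit to the argument in the cited paper. But as written there are several genuine gaps.

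First, the base case is incomplete. Under the hypothesis that $\mathcal{T}(f)$ and $\mathcal{T}(g)$ each have exactly one non--dead-end, the Newton polyhedron of each of $f$ and $g$ has a single compact $1$-dimensional face with a single root in its face polynomial. Consequently $f$ and $g$ can separate at the first vertical line in \emph{two} distinct ways: (i) same slope, different roots (the case you treat, corresponding to Figure~\ref{QO45nna}); or (ii) different slopes, in which case $\mathcal{N}(fg)$ has two compact faces and the first vertical line of $\mathcal{T}(fg)$ already has two vertices $v_1,v_2$, one blue-carrying and one red-carrying (compare Figure~\ref{QO44nna}, and the worked example $f=z^2-x^3$, $g=z^3-x^2$). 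Your base case says nothing about (ii), where the path from the blue end to the red end already passes through more than one vertex of the first vertical line, and the adjacent-decoration product must pick up contributions from both.

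Second, even in case (i) your computation is face-level only: you compute $\res_z$ of the two initial forms and match it against $m_f m_g\,p\,q_i$. But the formula in the statement multiplies the decorations along the \emph{whole} path from the blue arrow to the red arrow, including all vertices in the blue subtree hanging off $v$ and the arrow decoration at the far end; these coincide with $m_f$ and $m_g$ only when $m_f=m_g=1$. For $m_f>1$ one must in addition prove the identity
$$
m_f=(\text{decoration of the blue end})\cdot\prod_{w}p_w,
$$
the product running over the vertices $w$ strictly between the blue end and $v$, and likewise for $g$. You use this implicitly without stating or proving it, and it is not obvious from the definitions. One also has to argue that the full resultant, not just the resultant of the initial forms, has exactly this multi-order; this follows from the Minkowski-sum behaviour of Newton polyhedra, but should be said.

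Third, the inductive step rests on the unproved identity $\res_z(f,g)(y_1^{p_1},\dots,y_d^{p_d})=\bdy^{E}\,\res_{z_2}(\tilde f,\tilde g)$, with $E$ left undetermined. Because $\epsilon_{\Gamma_1}$ sends $x_i\mapsto y_i^{p_i}$, the left-hand side is the $p_i$-fold ramification of $\res_z(f,g)$ in each variable, and the monomial factors $\bdy^{\alpha_f},\bdy^{\alpha_g}$ from Lemma~\ref{total-transf} plus the ``escaping'' low-order parts of $\sigma(f),\sigma(g)$ all contribute to $E$. You would need to make $E$ explicit and verify it matches the decorations adjacent to the path at $v_1$. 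Moreover, the assertion that ``the bicoloured Newton tree of $\tilde f\tilde g$ is $\mathcal{T}(fg)$ with its first vertical part removed'' is imprecise: what one actually gets is the subtree hanging off the single horizontal edge of $v_1$ associated with the chosen root $\mu$, whose \emph{local} numerical data are reset and related to those of $\mathcal{T}(fg)$ only through the recursion \eqref{eq-newton-tree-Q} and Lemma~\ref{lema-newton-tree-Q}. Finally, you explicitly defer the emergence of the $\gcd$-factor in item~(3) of the statement to ``bookkeeping''; that bookkeeping is precisely the non-trivial content of the proposition for $d>1$ and cannot be left unresolved in a complete proof.
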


\section{P-Good coordinates}\label{P-good-coor-sec}

Let $f(\bdx,z)\in \bk[[\bdx]][z]$ be a polynomial and we assume that we are as in Notation \ref{inicial} 
and  in a suitable system  of coordinates for $f$.
There is a system of good coordinates, introduced by P.~Gonz\'alez-P\'erez in \cite{go:01}
 and \cite[Lemma 3.2]{GMN:03}, that is called P-good coordinates. 

\begin{definition}
A polynomial $f(\bdx,z)\in \bk[[\bdx]][z]$ is in  \emph{P-good coordinates}
if 
\begin{enumerate}
\item its Newton polyhedron $\mathcal{N}(f)$  is a monotone polygonal path,
\item  if
there exists a face $\Gamma$ of $\mathcal{N}(f)$  whose line supporting $\Gamma$ is given by 
equations  $\{\alpha _i +q_i \beta =N_i$, $i=1,\cdots d\}$, then $\Gamma$ is unique and 
hits the hyperplane $\beta =0$,
\item if
there exists a face $\Gamma$ of $\mathcal{N}(f)$ whose line supporting $\Gamma$ is given by 
equations  $\{\alpha _i +q_i \beta =N_i$, $i=1,\cdots d\}$, then the corresponding polynomial 
$f_\Gamma$ is not of the form $a_{\Gamma}\bdx^{\mathbf{m}} (z-h(\bdx))^n $
with $h(\bdx) \in \bk[\bdx]$ and $a_{\Gamma}\in \bk^*$, see~(\ref{eq-fgamma}).
\end{enumerate}
\end{definition}

Let us assume that there is a suitable system of coordinates such that the Newton polyhedron of $f$ is a monotone polygonal path. 
Let $\mathcal{T}(f)$ be the Newton tree of $f$ in this suitable system of coordinates. 

\begin{proposition}\label{prop-p-coord}
Starting from  a suitable system of coordinates such that the Newton polyhedron of $f$ is a monotone polygonal path
we find a P-good system of coordinates for $f$ by a change of coordinates of the type $z\mapsto z+a\bdx ^{\mathbf{q}}$.
The Newton tree in this system of coordinates can be deduce from $\mathcal{T}(f)$ in a unique way.
\end{proposition}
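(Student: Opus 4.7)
The plan is to iteratively apply elementary changes of variable of the form $z\mapsto z+a\bdx^{\mathbf{q}}$ to remove each obstruction to P-goodness, while tracking the effect on the Newton tree to obtain uniqueness.

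First, I would observe that the P-good conditions (2) and (3) only constrain faces $\Gamma$ whose supporting hyperplanes have the form $\alpha_i+q_i\beta=N_i$, i.e.\ faces with $p_\Gamma=1$. Any face with $p_\Gamma>1$ automatically satisfies both conditions, because its face polynomial, as written in \eqref{eq-fgamma}, contains at least one factor $z^{p_\Gamma}-\mu\bdx^{\mathbf{q}}$ with $p_\Gamma>1$, which cannot equal $(z-h(\bdx))^n$ for any $h\in\bk[\bdx]$. Thus the three possible obstructions are: (a) the existence of a $p=1$ face whose polynomial is a pure power $a_\Gamma\bdx^{\mathbf{m}}(z-h)^n$ (in which case it is eliminable by Definition~\ref{eliminated} with $h=a\bdx^{\mathbf{q}}$); (b) the existence of more than one $p=1$ face; (c) a $p=1$ face not reaching $\beta=0$. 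The key observation is that (b) and (c) force the existence of at least one $p=1$ face above another face, and in that situation the upper one must in fact be eliminable: its face polynomial is monomial times a power of $(z-a\bdx^{\mathbf{q}})$ with no extra $z$-factor, because no $z$-factor can appear strictly above another edge of the polygonal path. Hence it suffices to kill the eliminable $p=1$ faces one by one.

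Next, I would carry out the elimination. Given a $\nu$-proper (or more generally, a face $\Gamma_\ell$ of the monotone polygonal path) with $p_\ell=1$ and $f_{\Gamma_\ell}=c\bdx^{\mathbf{m}}(z-a\bdx^{\mathbf{q}^\ell})^n$, the substitution $z\mapsto z+a\bdx^{\mathbf{q}^\ell}$ replaces $f_{\Gamma_\ell}$ by $c\bdx^{\mathbf{m}}z^n$, so the vertex $A_\ell$ (the lower endpoint of $\Gamma_\ell$) drops out and $\Gamma_\ell$ is replaced by a single segment joining $A_{\ell-1}$ to the next surviving vertex below. I would then verify that the Newton polyhedron remains a monotone polygonal path (checking that no new face of dimension $\geq 2$ is created because the vertex $A_{\ell-1}$ becomes strictly dominant and the remaining polygonal structure is preserved) and that the system of coordinates stays suitable (the $\nu$-proper face $\Gamma_1$ is by hypothesis not eliminable, so the change we perform is forced never to be the one eliminating $\Gamma_1$). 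Termination follows because each step strictly decreases the number of vertices of $\mathcal N(f)$ above $\beta=0$, and the $z$-degree of $f$ is unchanged and finite.

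Finally, for the uniqueness statement, I would show that each elimination step corresponds to a prescribed contraction at the tree level: the vertex $v_\ell$ (with $p_\ell=1$, a single outgoing horizontal edge corresponding to the unique root $a$, and no dead-end decoration because $f_{\Gamma_\ell}$ is a pure power) together with its horizontal subtree is absorbed into the adjacent vertices via the rules \eqref{eq-newton-tree-Q} and Lemma~\ref{lema-newton-tree-Q}. Because the choice of which eliminable face to treat first does not affect the final polygonal path (confluence being visible at the combinatorial level, since the contraction operations at different vertices commute in the tree), the resulting Newton tree is read off from $\mathcal T(f)$ in a unique way. The main obstacle is precisely this confluence/termination analysis: one must verify that the substitutions $z\mapsto z+a\bdx^{\mathbf{q}}$, which generally affect all terms of $f$, interact only in the controlled way described above, so that no new eliminable $p=1$ face appears and the final coordinate system is independent of the chosen sequence of eliminations.
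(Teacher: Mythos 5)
Your overall framework (spot the $p=1$ faces, argue they are the only obstructions, modify by $z\mapsto z+a\bdx^{\mathbf{q}}$, and track the effect on the tree) is the right shape, but the central step of your argument is false, and this is precisely the part where the paper's proof does something essentially different.

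You claim that if a $p=1$ face $\Gamma_i$ lies strictly above another edge of the monotone polygonal path, then $f_{\Gamma_i}$ must be a pure power $c\bdx^{\mathbf{m}}(z-a\bdx^{\mathbf{q}})^n$ with no extra $z$-factor, hence eliminable. This is backwards. If $\Gamma_i=[A_{i-1},A_i]$ with $i<n$, the lower endpoint $A_i$ has strictly positive $z$-coordinate $k>0$, and therefore $f_{\Gamma_i}=\bdx^{k'}z^k\prod_j(z-\mu_j\bdx^{\mathbf{q}^i})^{m_j}$ does carry a $z^k$ factor; moreover there is no restriction forcing the number of distinct roots $\mu_j$ to be one. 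Concretely, take $f=(z-x_1)(z^2-x_1^3)=z^3-x_1z^2-x_1^3z+x_1^4$: the Newton polyhedron is $[(0,3),(1,2)]\cup[(1,2),(4,0)]$, the top face has $p=1$, $q=1$, and its face polynomial is $z^2(z-x_1)$. This is not of the form $\bdx^{\mathbf{m}}(z-h(\bdx))^n$ (it has roots $0$ and $x_1$), so it cannot be eliminated by a substitution $z\mapsto z+a\bdx^{\mathbf{q}}$, and the coordinates are nonetheless suitable. Your "kill eliminable $p=1$ faces one by one" scheme has nothing to apply to here, yet $f$ is not P-good.

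What the paper actually does in Sub-Case~\ref{subcaso1} of its proof is the opposite of elimination: for an interior $p_i=1$ vertex it chooses $a$ \emph{distinct from every root} $\mu_j$ (and from $0$), so that after $z\mapsto z+a\bdx^{\mathbf{q}^i}$ the face $\Gamma_i$ keeps the same supporting equations but its polynomial becomes $\bdx^{k'}(z'+a\bdx^{\mathbf{q}^i})^k\prod_j(z'-(\mu_j-a)\bdx^{\mathbf{q}^i})^{m_j}$, which now has a nonzero constant term in $z'$ and hence reaches the hyperplane $\beta=0$. The former faces $\Gamma_{i+1},\dots,\Gamma_n$ disappear from the new first Newton diagram and reappear, via the Newton map corresponding to the new root $-a$, as a horizontal subtree attached to $v_i$; on the tree level this is the cut-and-reglue plus new dead-end described in the paper, not a contraction. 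Elimination (your operation) does occur, but only in the narrow Sub-Case~\ref{subcaso2}: the last vertex $v_n$, valency exactly $3$, $k=0$ and a single root, i.e.\ when $f_{\Gamma_n}=\bdx^{k'}(z-a\bdx^{\mathbf{q}^n})^m$ genuinely is a pure power. Without handling the non-eliminable interior case, your induction does not get off the ground, and the claimed confluence/termination analysis does not apply because there is no canonical "elimination" step to iterate.
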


\begin{proof}
Let $\mathcal{T}_{\mathcal{N}}(f)$ be the first vertical line of $\mathcal{T}(f)$. The vertices are denoted by $v_1, \cdots, v_n$ from top to bottom decorated with $(\mathbf{q},p)$.
\begin{caso1}
None of the decorations $p$ of the vertices of $\mathcal{T}_{\mathcal{N}}(f)$ is equal to 1
\end{caso1}

In this case, $f$ is already in P-good coordinates.

\begin{caso1} There is a decoration $p$ of some vertex which is equal to one.
\end{caso1}

Let $i$ be the smallest index such that $p_i=1$. We have two cases to consider.

\begin{subcaso}\label{subcaso1} $i\neq n$.
\end{subcaso}

In this case, $f$ is not in P-good coordinates. Assume the decorations of the vertex $v_i$ 
in $\mathcal{T}_{\mathcal{N}}(f)$ are
$(q_1^i,\cdots,q_d^i,p_i=1)$ and the face polynomial is 
$$
\bdx^{k'}z^k\prod_j (z-\mu_j\bdx^{\mathbf{q}^i})^{m_j}.
$$

Consider the change of coordinates 
$$
z=z'+a\bdx ^{\mathbf{q}^i},
$$
where $a\neq \mu_j$ for all $j$. 
Under this change of variables neither the faces $\Gamma_1,\cdots, \Gamma_{i-1}$ 
nor  their face polynomials do change. The face $\Gamma_i$ 
has the same equation but now the face polynomial is 
$$
\bdx^{k'}(z'+a\bdx^{\mathbf{q}^i})\prod_j (z-(\mu_j-a)\bdx^{\mathbf{q}^i})^{m_j}.
$$
The new face $\Gamma_i'$ hits the $\bdx$-hyperplane.
Then $f$ is in P-good coordinates.

Now we compare the tree $\mathcal{T}(f)$ with the tree $\mathcal{T}'(f)$ in these new coordinates.
The Newton maps corresponding to the vertices $v_1, \cdots,v_{i-1}$ are the same. 

At $v_i$, we have to consider the Newton maps
$$
\bdx=\bdy, \quad z=\bdy^{\mathbf{q}^i}(z_1-\mu_j).
$$
The Newton maps to be considered at $v_i'$ are given by the equations
$$
\bdx=\bdy, \quad z'=\bdy^{\mathbf{q}^i}(z'_1-(\mu_j-a)).
$$
Then $z_1=z_1'$ and along the corresponding edge nothing is changed.
Now at $v_i'$ we have also to consider the Newton map 
$$
\bdx=\bdy, \quad z'=\bdy^{\mathbf{q}^i}(z'_1-a)
\Longrightarrow
\bdx=\bdy, \quad z=\bdy^{\mathbf{q}^i}z_1'.
$$
In the change of variables, the monomial $\bdx ^{\balpha}z^{\beta}$ becomes $\bdy ^{\balpha+\beta\mathbf{q}^i}z_1'^{\beta}$.

We consider the transformation in the affine space 
$$
(\balpha,\beta)\mapsto (\balpha+\beta\mathbf{q}^i,\beta).
$$
A hyperplane with equations $\{P\alpha_j+Q_j\beta=0\}$ maps to $\{P\alpha_j'+(Q_j-P q_j^i)\beta=0\}$. Then the faces 
$\Gamma_{i+1},\cdots,\Gamma_n$, map to faces of the Newton diagram in the coordinates $(\bdy,z_1')$ 
and since  $\Gamma_{i+1},\cdots,\Gamma_n$
was a monotone polygonal path, it transforms to a monotone polygonal path. 

Now consider a face polynomial $\bdx^{k'}z^k \prod_l (z^P-\mu_l\bdx ^{\mathbf{Q}})^{m_l}$. In the change of coordinates it becomes 
$$
\bdy^{k'+\mathbf{q}^ik"}z_1'^{k}\prod_l (z^P-\mu_l\bdy^{\mathbf{Q}-\mathbf{q}^iP})^{m_l}.
$$ 
We consider the following Newton maps. 
Let $(\mathbf{u},u)$ be integers such that $1+\mathbf{u}\cdot\mathbf{Q}=uP$. 
We have $1+\mathbf{u}\cdot(\mathbf{Q}-P \mathbf{q}^i)=(u-\mathbf{u}\cdot\mathbf{q}^i)P$.
The Newton maps are
$$
\bdx=\mu_l^{\mathbf{u}}\bdx'^P,z=\bdx'^{\mathbf{Q}}(z_2+\mu_j^{u})\quad\text{and}
\quad
\bdy=\mu_l^{\mathbf{u}}\bdy'^P,z'_1=\bdy'^{(\mathbf{Q}-P \mathbf{q}^i)}(z'_2+\mu_j^{(u-\mathbf{u}\cdot\mathbf{q}^i)}).
$$
Since $z=\bdx^{\mathbf{q}^i}z_1'$, we have 
$\bdx'=\bdy', z_2=\mu_l^{\mathbf{u}.\mathbf{q}^i}z_2'$. Then we have essentially the same coordinates.

In conclusion, to get the tree of $f$ in P-good coordinates from 
$\mathcal{T}(f)$, we have to cut the edge~$e_i$ under $v_i$. 
We get two trees, $\mathcal{T}_a(f)$, the part which contains $v_i$ and $\mathcal{T}_u(f)$ containing~$e_i$. 
Then we stick again $\mathcal{T}_u(f)$ to $\mathcal{T}_a(f)$, sticking the edge $e_i$ on $v_i$ as an horizontal edge.
 We add a new vertical edge under $v_i$ decorated with $1$ and ending by an arrowhead decorated with~$(0)$. 
Since $v_i$ has a valency greater or equal to $3$ on $\mathcal{T}(f)$, it has a valency greater or equal to~$4$ on $\mathcal{T}'(f)$.

To put $f$ in P-good coordinates we made the choice of $a$. 
It is easy to verify that actually the tree doesn't depend on the choice of $a$. 
Then in this case we have unicity of $\mathcal{T}'(f)$ from $\mathcal{T}(f)$.

\begin{subcaso}\label{subcaso2}
$i=n$. 
\end{subcaso}

The edge $e_n$ under the vertex $v_n$ ends with an arrow decorated with $(k)$. Three cases may arise:
\begin{itemize}
\item If $k\neq 0$ then we are in P-good coordinates.
\item  If $k=0$ and the valency of $v_n$ is strictly greater than~$3$ then we are in P-good coordinates.
\item  If $k=0$ and the valency of $v_n$ is equal $3$ then we are not in P-good coordinates.
\end{itemize}
We study this last case. 
The face polynomial at $v_n$ is 
$$\bdx ^{k'}(z-a\bdx ^{\mathbf{q}^n})^m.$$
To eliminate the face $\Gamma_n$, we have to perform the (unique) change of variables 
$$z=z_1'+a\bdx^{\mathbf{q}^n}.$$
We considered the Newton map:
$$\bdx=\bdy, \quad z=\bdy^{\mathbf{q}^n}(z_1+a)$$
and we have $z_1'=\bdx^{\mathbf{q}^n}z_1$. The computation are the same than in Sub-Case~\ref{subcaso1}.

To get $\mathcal{T}'(f)$ from $\mathcal{T}(f)$, we do the inverse operation than in Sub-Case~\ref{subcaso1}. 
We delete the edge $e_n$. 
Denote by $e$ the horizontal edge attached to $v_n$. We cut $\mathcal{T}(f)$ in two pieces, separating $v_n$ and $e$ 
and we stick it back so that $e$ becomes the vertical edge under $v_n$. 
The vertex $v_n$ has now valency $2$ and has to be eliminated.

Note that we are not necessarily in P-good coordinates yet. We illustrate this fact in several examples:
\begin{itemize}
\item In Figure~\ref{fig:QO082a} we are in suitable coordinates and in Sub-Case~\ref{subcaso2}.
\item In Figure~\ref{fig:QO082c} we have eliminated the face $\Gamma_n$ but we are not yet in P-good
coordinates. We are in Sub-Case~\ref{subcaso1}.
\item In Figure~\ref{fig:QO082b} we are in P-good coordinates.
\end{itemize}

We have proven that if $f$ is in suitable coordinates such that its Newton polyhedron is a monotone polygonal path, 
then there is essentially a unique way to find P-good coordinates.
\end{proof}

\begin{figure}[ht]
\centering
\subfigure[]{
\includegraphics[scale=1]{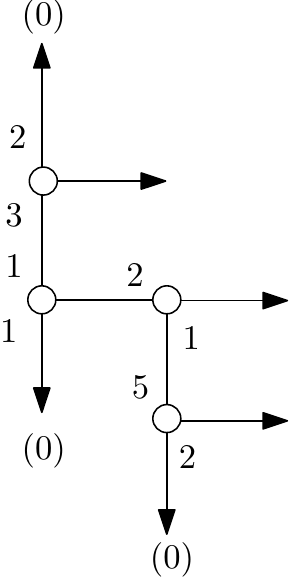}
\label{fig:QO082a}
}
\hfil
\subfigure[]{
\includegraphics[scale=1]{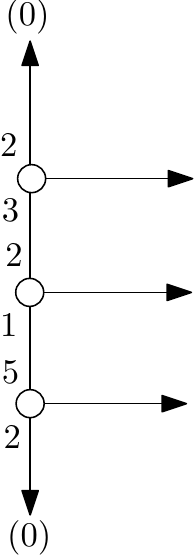}
\label{fig:QO082c}
}
\caption{Newton tree in  suitable coordinates and not P-good}
\label{fig1}
\end{figure}

\begin{figure}[ht]
\begin{center}
\includegraphics[scale=1]{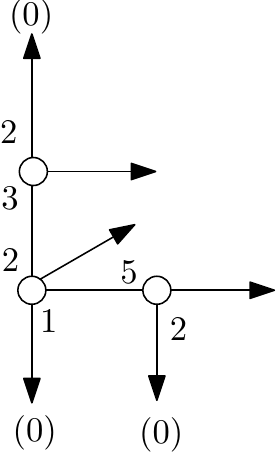}
\caption{Newton tree in P-good coordinates}
\label{fig:QO082b}
\end{center}
\end{figure}

\subsection{On $\nu$-Quasi-ordinary polynomials whose Newton tree 
ends only with arrow-heads.}
\mbox{}

Now we assume that $f$ is such that there exists a system of suitable coordinates such that $\mathcal{T} (f)$ has only arrowheads (no black boxes).

In that case, the Newton polyhedron of $f$ is a monotone polygonal path.
We can find a system of P-good coordinates for $f$. 
Let $\mathcal{T}_1 (f)$ denote the Newton tree in this system. 
It has also only arrowheads. Consider the Newton transforms of $f$ in this system of P-good coordinates;
their Newton polyhedra are monotone polygonal paths. 
Then we can find a system of P-good coordinates for them, and so on, see Proposition~\ref{prop-p-coord}. 
We can summarize it as follows.

\begin{definition}
A Newton tree is said to be a \emph{P-good tree} if
the vertices with decoration 
$p=1$ are at the bottom of the tree, connected to a vertical edge ending with an arrowhead decorated with $(0)$, 
and with valency strictly greater than $3$.
\end{definition}

\begin{proposition}
Starting from a Newton tree $\mathcal{T} (f)$ (for suitable system of coordinates), 
we can associate to $f$ a unique P-good tree.
\end{proposition}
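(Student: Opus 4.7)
The plan is to iterate Proposition~\ref{prop-p-coord} along the tree, level by level, and then verify that the resulting object fits the definition of a P-good tree and is independent of the choices made.

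First, I would work on the first vertical line $\mathcal{T}_{\mathcal{N}}(f)$. Since by hypothesis $\mathcal{T}(f)$ ends only with arrow-heads, we are not in Case~\ref{caso2a} at any step of the Newton process, so in particular the Newton polyhedron of $f$ is a monotone polygonal path. This is exactly the setting in which Proposition~\ref{prop-p-coord} applies: there is an essentially unique change of variable of the form $z \mapsto z+a\mathbf{x}^{\mathbf{q}^i}$ (performed, if needed, for each vertex $v_i$ of $\mathcal{T}_{\mathcal{N}}(f)$ carrying decoration $p_i=1$) which produces P-good coordinates for the first polygonal path of $f$. The modification on the tree is the one described in Sub-Cases~\ref{subcaso1} and~\ref{subcaso2}: vertices with $p=1$ that are not at the bottom acquire an extra arrow $(0)$ as a new vertical edge (raising their valency to at least $4$), and a bottom vertex with $p=1$, valency $3$ and arrow $(0)$ is deleted together with the monotone edge under it. In either case the combinatorial operation on $\mathcal{T}(f)$ is canonical.

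Next, I would proceed by recursion on the depth of the tree. After the Newton maps associated with the first (now P-good) polygonal path, one obtains, for each vertex $v_\ell$ of the modified first vertical line and each root $\mu_j^\ell$, a transform $f_{\Gamma_\ell, j}$ whose Newton tree is precisely the subtree of $\mathcal{T}(f)$ hanging at $v_\ell$ through the horizontal edge (after the rewriting by the first-level change of variable). Because $\mathcal{T}(f)$ has only arrow-heads, the same is true of each $\mathcal{T}(f_{\Gamma_\ell, j})$; in particular each $f_{\Gamma_\ell,j}$ has a monotone polygonal Newton polyhedron. Since the $z$-degree of the transforms drops strictly at every Newton step (because each $\Gamma_\ell$ cannot be eliminated, see the end of \S\ref{new-pro-primera}), the recursion terminates. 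Applying Proposition~\ref{prop-p-coord} at each level recursively, one gets a well-defined tree $\mathcal{T}'(f)$.

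Finally, I would check that $\mathcal{T}'(f)$ satisfies the definition of a P-good tree: at each vertical part, the only remaining vertices with $p=1$ are those produced by the Sub-Case~\ref{subcaso1} surgery, namely vertices lying at the bottom of their vertical part, connected to a vertical edge ending with an arrow-head decorated with $(0)$, and of valency $\geq 4$ by construction. Uniqueness follows from the already-observed fact (end of the proof of Proposition~\ref{prop-p-coord}) that the resulting tree does not depend on the choice of the parameter $a$ used in the changes of variables at any level, together with the fact that at each level the preceding tree is the unique input for the next recursive call.

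The main obstacle is conceptual rather than technical: one needs to be sure that the recursion makes sense, i.e.\ that after performing the P-good normalization at a given level the subsequent Newton transforms are still computed from a tree with only arrow-heads, so that Proposition~\ref{prop-p-coord} can be re-applied. This is handled by observing that the P-good surgery only inserts dead-end arrows $(0)$ and never turns an arrow-head into a black box, so the hypothesis ``no black box'' is preserved at every step of the recursion.
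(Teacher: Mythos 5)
Your proof is correct and follows essentially the same approach as the paper: it iterates Proposition~\ref{prop-p-coord} along the vertical levels of the tree, using the strict drop of $z$-degree to guarantee termination and the independence from the choice of $a$ to obtain uniqueness at each level. Your added observation that the P-good surgery only inserts dead-end arrows $(0)$ (hence preserves the ``no black box'' hypothesis down the recursion) is a useful explicit check that the paper leaves implicit.
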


\begin{qst}\label{qst-1}
Assume there are two suitable systems of coordinates such that the Newton trees
$\mathcal{T} (f)$ and $\mathcal{T}' (f)$ have only arrowheads. Do we get the same P-good tree?
\end{qst}

In order to answer this question, we distinguish three cases:

\begin{enumerate}

\item The change of variables doesn't change the Newton diagram of $f$. 
It doesn't change either the face polynomials, then $\mathcal{T} (f)=\mathcal{T}' (f)$. 
Then there is one P-good tree.

\item The change of variables modifies the Newton diagram as follows: 
The edges $\Gamma_1,\cdots,\Gamma_i$ remain unchanged and 
$\Gamma_{i+1},\cdots, \Gamma_n$ are replaced by a newface $\Gamma$. 
In any case $\mathcal{T}'(f)$ is not in P-good coordinates
and we are in Sub-Case~\ref{subcaso2} in the proof of Proposition~\ref{prop-p-coord}.
In order to get P-good coordinates we first have to come back to $\mathcal{T} (f)$. Then we have the same P-good tree.

\item The change of coordinates changes the Newton diagram as follows: It keeps unchanged $\Gamma_1,\cdots,\Gamma_i$, and replace $\Gamma_{i+1}$ by a face with same equations which hits the hyperplane $\{z=0\}$. Either one of 
$\mathcal{T} (f)$ or $\mathcal{T}' (f)$ is P-good and is the P-good tree of the other, or the two trees only differ by exchanging one vertical edge and one horizontal edge. They have the same P-good tree because in the P-good tree they are all horizontal.
\end{enumerate}

Hence, if the Newton tree $\mathcal{T} (f)$ for $f$, in suitable coordinates, has no black box
then we associate to it a unique P-good tree denoted by 
$\mathcal{T}_P (f)$ and the answer to Question~\ref{qst-1} is positive.

\begin{remark}
A P-good tree is not a minimal tree in the sense of Eisenbud and Neumann~\cite{en:85}. 
A minimal tree is unique if we forget about the direction of the edges, but not in the strong sense. A P-good tree is unique in the strong sense.
\end{remark}

This allows us to define the depth of $f$ in suitable coordinates such that $\mathcal{T} (f)$ has no black box.

\begin{definition}
The \emph{depth} of $f$ is the maximal length of horizontal paths in $\mathcal{T}_P (f)$,
denoted by~$\depth(f)$.
\end{definition}

\section {Quasi-ordinary polynomials}

\begin{notation}\label{inicial3}
 Let 
$g(\bdx,z) \in \bk[[\bdx]][z]$  be a 
polynomial of degree $m$ and regular of order $n\leq m$ 
with coefficients in the formal
power series ring $\bk[[\bdx]]$, that is 
$$
g(\bdx,z):=z^m+a_1(\bdx)z^{m-1}+\ldots+a_{m-n}(\bdx)z^n+a_{m-n+1}(\bdx)z^{n-1}+\ldots+ a_{m-1}(\bdx)z+a_m(\bdx),
$$
with $a_i(\textbf{0})=0$ for $m-n+1\leq i\leq m$ and $a_{m-n}(\textbf{0})\in \bk^*$ and
let $f(\bdx,z):=a_0 x_1^{n_1}\cdot\ldots\cdot x_d^{n_d} g(\bdx,z)$ where $ a_0\in \bk^*$. 
Applying  Weierstrass Preparation Theorem  to $g(\bdx,z)$ there exists a unit $u(\bdx,z) \in \bk[[\bdx]][z]$
and a Weierstrass polynomial 
$$
h(\bdx,z):=z^n+B_1(\bdx)z^{n-1}+\ldots+ B_{n-1}(\bdx)z+B_n(\bdx)
$$
such that $g(\bdx,z)=h(\bdx,z)u(\bdx,z)$. In fact 
$\deg_z u(\bdx,z)=m-n$,
e.g. see \cite[Chapter I,~p.~11]{GLS}. 
\end{notation}

\begin{definition}
A  polynomial $f\in \bk[[\bdx]][z]$  as in Notation \ref{inicial3} is called \emph{quasi-ordinary} if  its
discriminant $\Delta_z(f):=\res_z(f,\frac{\partial f}{\partial z})(\bdx)$ is
 $$\res_z(f,\frac{\partial f}{\partial z})(\bdx)=\bdx^{\balpha} \eta(\bdx)$$ with $\balpha\in \bn^d$ and $\eta(\textbf{0})\ne 0$.
\end{definition}

\begin{remark}
Since $\res_z(f,f_1f_2)=\res_z(f,f_1)\res_z(f,f_2)$, 
a  polynomial $f\in \bk[[\bdx]][z]$  as in Notation \ref{inicial3} is {quasi-ordinary} if and only if 
$g$ is quasi-ordinary.

If $f=f_1f_2\in \bk[[\bdx]][z]$ then $f_i$ is also quasi-ordinary because $\bk[[\bdx]]$ is a factorial ring and
the well-known  property of discriminants $\Delta_z(f_1f_2)=\Delta_z(f_1)\Delta_z(f_2)(\res_z(f_1,f_2))^2$.
\end{remark}

In this section we prove the following theorem.

\begin{theorem}\label{main2}
Let  $f\in \bk[[\bdx]][z]$ be a polynomial   as in Notation {\rm~\ref{inicial3}}. 
Then 
$f$ is quasi-ordinary if and only if there exists a suitable system of coordinates of $f$ such that
its Newton tree $\mathcal{T}(f)$ has only arrow-heads decorated with $(0)$ and $(1)$ and has no black boxes.

\end{theorem}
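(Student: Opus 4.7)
The strategy is an induction on the complexity of the Newton tree (equivalently, on the $z$-degree of $f$), treating $f$ and its partial derivative $f' := \partial f/\partial z$ in tandem. The main tools are Lemma~\ref{total-transf}, which describes the behaviour of $f'$ under Newton maps, together with Proposition~\ref{prop-sep} and Theorem~\ref{thm-sepcom}, which respectively read off $\res_z(f,f')$ from the bi-coloured Newton tree $\mathcal{T}(f f')$ and provide the criterion ``separated implies comparable''. The heart of the argument is the following transfer principle, derived from Lemma~\ref{total-transf} and the multiplicativity of resultants: for every Newton map $\sigma = \sigma_{\Gamma_\ell,\mu_j^\ell}$ the discriminant $\Delta_{z_2}(\sigma f)$ equals $\sigma(\Delta_z f)$ times an explicit monomial $\bdy^{\mathbf{N}}$ and a unit. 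Consequently $\Delta_z(f)$ is a monomial times a unit if and only if $\Delta_{z_2}(f_{1,\Gamma_\ell,j})$ is so for every edge $\Gamma_\ell$ of the polygonal path and every root $\mu_j^\ell$ of the face polynomial.

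For ($\Rightarrow$), assume $f$ is quasi-ordinary. Since $\bk[[\bdx]]$ is factorial and $\Delta_z(f_1 f_2)=\Delta_z(f_1)\Delta_z(f_2)\res_z(f_1,f_2)^2$, we may reduce to $f$ irreducible, which is $\nu$-quasi-ordinary by Hironaka and elementary by the corollary of Factorization Theorem~\ref{teorema1}. So $\mathcal{N}(f)$ has a unique $\nu$-proper $1$-dimensional face $\Gamma$ and $f_\Gamma = a\bdx^{\mathbf{n}}z^{n_{d+1}}(z^p-\mu\bdx^{\mathbf{q}})^m$; in particular no black box can appear at this top vertical line. The transfer principle above, together with the strict $z$-degree decrease $m < \deg_z f$ (because $\Gamma$ cannot be eliminated), lets us apply the inductive hypothesis to the Newton transform to conclude that its tree has only arrow-heads $(0),(1)$ and no black boxes. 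Step~\ref{paso3} of the tree construction then transfers the same conclusion to $\mathcal{T}(f)$. The base case is $\deg_z f\le 1$, which produces an arrow-head $(0)$ or $(1)$.

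For ($\Leftarrow$), assume $\mathcal{T}(f)$ has only arrow-heads $(0),(1)$ and no black boxes. The absence of black boxes ensures that at every stage the Newton diagram is a monotone polygonal path (Case~\ref{caso3a} is the only one that arises before termination), so the transfer principle applies at each level. By the inductive hypothesis applied to every Newton transform (whose tree inherits the same structural restriction), each Newton transform is quasi-ordinary, hence by the transfer principle so is $f$. The base case of an arrow-head $(1)$ corresponds to $f$ of $z$-degree one, trivially quasi-ordinary; a dead end $(0)$ contributes only a monomial factor $\bdx^{\bdm}$ with a unit coefficient, which is also trivially quasi-ordinary.

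The main obstacle is making the transfer principle precise enough to carry the induction cleanly. Lemma~\ref{total-transf} provides the factor $\bdy^{\mathbf{q'}}$ relating $\partial(\sigma f)/\partial z_2$ to $\sigma(f')$; combined with the factor $\bdy^{m\mathbf{p*q}+\mathbf{n*p}}$ that $\sigma f$ itself contributes, one must verify that these monomial contributions absorb into the discriminant without disturbing the monomial-times-unit structure, and that the two formulations ``$\mathcal{T}(f)$ has only arrow-heads with decorations $(0),(1)$'' and ``each Newton transform satisfies the same tree condition'' are equivalent via Step~\ref{paso3}. Once this bookkeeping is established, the induction on the strictly decreasing $z$-degree runs cleanly.
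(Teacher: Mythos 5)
Your proposal hinges on a ``transfer principle'': that $\Delta_{z_2}(\sigma f)$ equals a monomial times $\sigma_x(\Delta_z f)$ (where $\sigma_x$ is the $\bdx$-part of the Newton map), so that $f$ is quasi-ordinary if and only if its Newton transform $f_{1,\Gamma_\ell,j}$ is. This is a genuinely different route from the paper, which for the direction ($\Leftarrow$) proves directly that $f$ and $f_z$ are \emph{separated} by an explicit face-by-face computation of the face polynomials of $f_z$ (the three lemmas just before the proof), and then invokes Theorem~\ref{thm-sepcom}; and for ($\Rightarrow$) simply cites the construction of the Newton tree for quasi-ordinary polynomials in \cite{aclm:05}. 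Your transfer principle, if established, would give a cleaner self-contained induction and would bypass the bicoloured-tree machinery entirely.

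The gap is that the transfer principle is asserted, not derived from the tools you cite. Lemma~\ref{total-transf} gives only the chain-rule identity $\partial(\sigma f)/\partial z_2 = \bdy^{\mathbf{q'}}\,\sigma(f_z)$, and ``multiplicativity of resultants'' ($\res_z(f,gh)=\res_z(f,g)\res_z(f,h)$) is not what is needed here. What you actually need, and must state and prove, is the change-of-variables formula for the resultant under the affine substitution $z=\bdy^{\mathbf{q'}}(z_2+\mu^u)$ built into $\sigma$, namely $\res_{z_2}(\sigma f,\sigma g)=(\bdy^{\mathbf{q'}})^{(\deg_z f)(\deg_z g)}\,\sigma_x\bigl(\res_z(f,g)\bigr)$, together with the observation that the monomial substitution $x_i\mapsto\mu^{u_i}y_i^{p_i}$ preserves, in both directions, the property of being a monomial times a unit. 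You must then also track the monomial factor $\bdy^{\mathbf a}$ separating $\sigma f$ from $f_{1,\Gamma_\ell,j}$. None of this is hard, but none of it is in the ingredients you list, and without it the ``consequently'' is unjustified. Two further points need attention: your induction variable cannot be the $z$-degree, because $\sigma f$ has the same $z_2$-degree as $f$ has $z$-degree (it is the $z$-\emph{order}, or equivalently the depth of the Newton tree, that strictly decreases, and the theorem is stated for polynomials as in Notation~\ref{inicial3}, not Weierstrass ones, so the inductive hypothesis must be applied to $f_{1,\Gamma_\ell,j}$ directly rather than to its Weierstrass part); and in ($\Rightarrow$) the reduction to $f$ irreducible is not free, because the Newton tree of a product is not the disjoint union of the trees of the factors --- they share vertices whenever the factors share initial faces --- so passing from ``every irreducible factor has a nice tree'' to ``$f$ has a nice tree'' requires an argument (essentially the content of the \cite{aclm:05} construction the paper cites).
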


Quasi-ordinary power series were introduced by Zariski
using the discriminant. In fact quasi-ordinary power 
series are the natural generalization, in the sense of the 
factorization theorem given by Jung and Abhyankar~\cite{ab:55}, of algebraic curves.

 The fact that at each stage of the Newton process, using eventually automorphisms,
its Newton polyhedron is a monotone polygonal path is very useful.
It is one of the main ingredients in the proof in~\cite{aclm:05} of the monodromy
conjecture for hypersurfaces defined by quasi-ordinary power series in arbitrary dimension.

First we prove that if $f\in \bk[[\bdx]][z]$ is quasi-ordinary, 
the Newton process ends with monomials multiplied by a unit.

\begin{proposition}[\cite{lu:83}]\label{prop-lu}
Let $f\in \bk[[\bdx]][z]$ be a quasi-ordinary polynomial, then there exists a power series $b(\bdx)\in \bk[[\bdx]]$ such that 
$f(\bdx,z-b(\bdx))\in \bk[[\bdx]][z]$ is a $\nu$-quasi-ordinary polynomial.
\end{proposition}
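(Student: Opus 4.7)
My strategy is to pass to the Puiseux factorisation of $f$, identify a common initial part of the roots, and take $b(\bdx)$ to be minus that series.

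First, the remark just before the statement shows that $f$ is quasi-ordinary if and only if the Weierstrass polynomial $h$ of Notation~\ref{inicial3} is, so I may assume $f=h$ is a monic Weierstrass polynomial of $z$-degree $n$. The Jung--Abhyankar theorem~\cite{ab:55} then yields an integer $N\in\bnm$ and Puiseux series $\zeta_1,\ldots,\zeta_n\in\bk[[x_1^{1/N},\ldots,x_d^{1/N}]]$ with $h=\prod_{i=1}^{n}(z-\zeta_i)$. Since the discriminant of $h$ is (up to sign) the product of the differences $\zeta_i-\zeta_j$, the quasi-ordinary hypothesis translates into the statement that every pairwise difference $\zeta_i-\zeta_j$ is a monomial $\bdx^{\lambda_{ij}}$ (with $\lambda_{ij}\in\tfrac{1}{N}\bn^d$) times a unit of the Puiseux ring.

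Second, I would single out the common initial part of the roots. A short algebraic argument starting from the identity $(\zeta_i-\zeta_j)+(\zeta_j-\zeta_k)=\zeta_i-\zeta_k$ shows that any two exponents in $\Lambda:=\{\lambda_{ij}\mid i\ne j\}$ are componentwise comparable, so $\Lambda$ is totally ordered and admits a componentwise minimum $\lambda_1$. Writing $\zeta_i=\sum_\alpha c_{i,\alpha}\bdx^\alpha$, define $b_i(\bdx):=-\sum_{\alpha\notin\lambda_1+(\br_+)^d}c_{i,\alpha}\bdx^\alpha$. If $b_i\ne b_j$ for some pair, then $\zeta_i-\zeta_j$ would carry a nonzero term with exponent outside $\lambda_1+(\br_+)^d$, contradicting the minimality of $\lambda_1$; hence $b_1=\cdots=b_n=:b$. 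Because the Galois group of the Puiseux covering permutes the $\zeta_i$'s and acts on each Puiseux monomial through a character of $\bmu_N^d$, the series $b$ is Galois-invariant, so only monomials with integer exponents survive and $b\in\bk[[\bdx]]$.

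Third, I would verify that $\tilde f(\bdx,z):=f(\bdx,z-b(\bdx))$ is $\nu$-quasi-ordinary. Its Puiseux roots $\tilde\zeta_i=\zeta_i+b$ have support in $\lambda_1+(\br_+)^d$, and each $\tilde\zeta_i-\tilde\zeta_j=\zeta_i-\zeta_j$ still has leading exponent $\lambda_{ij}\geq\lambda_1$, so at least one $\tilde\zeta_i$ has leading term exactly at $\lambda_1$. Writing $\tilde f=\sum_{k}(-1)^k e_k(\tilde\zeta_1,\ldots,\tilde\zeta_n)\,z^{n-k}$ and examining the leading exponents of the elementary symmetric functions, one sees that the unique compact face of $\mathcal{N}(\tilde f)$ touching the top vertex $A=(\mathbf 0,n)$ is a single $1$-dimensional edge in the direction $\lambda_1$, ending at $(s\lambda_1,n-s)$ where $s$ is the number of $\tilde\zeta_i$ with leading exponent exactly $\lambda_1$. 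If this edge could be eliminated in the sense of Definition~\ref{eliminated}, Proposition~\ref{factor-cara-2} would force its face polynomial $z^{n-s}\prod_{i=1}^{s}(z-c_i\bdx^{\lambda_1})$ to take the form $\bdx^{\mathbf m}(z-g(\bdx))^n$, requiring $s=n$ and all leading coefficients $c_i$ equal; but then every $\zeta_i-\zeta_j$ would have leading exponent strictly greater than $\lambda_1$, contradicting $\lambda_1\in\Lambda$.

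The step I expect to be most delicate is the Galois-invariance argument in the second paragraph: it rests on the compatibility between the permutation action of $\bmu_N^d$ on $\{\zeta_1,\ldots,\zeta_n\}$ and its character action on Puiseux monomials (the latter fixing only the monomials with integer exponents). Once that is in place, the remaining work is essentially Newton-polyhedron bookkeeping performed root by root.
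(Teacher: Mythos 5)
The paper itself offers no proof of Proposition~\ref{prop-lu}; it is quoted with the citation \cite{lu:83}, Luengo's \emph{A new proof of the Jung--Abhyankar theorem}.  Your argument is therefore a fresh one rather than a restatement of the authors' own, and it goes through a genuinely different door: you invoke the Jung--Abhyankar factorization to produce Puiseux roots $\zeta_1,\dots,\zeta_n$ and then read the quasi-ordinary hypothesis off the differences $\zeta_i-\zeta_j$.  That route is legitimate inside this paper (where \cite{ab:55} is assumed), but it is worth noting that it would be circular in Luengo's own setting, where this proposition is a stepping stone \emph{toward} Jung--Abhyankar; his proof must proceed without it.

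Within your chosen route the argument is essentially correct, and I would only flag two small points of presentation.  First, the second paragraph claims that the set $\Lambda=\{\lambda_{ij}\}$ of dominant exponents is totally ordered.  The clean thing you actually prove by the identity $(\zeta_i-\zeta_j)+(\zeta_j-\zeta_k)=\zeta_i-\zeta_k$ is that, for a fixed reference index $1$, the exponents $\lambda_{1j}$ are pairwise comparable, and that every $\lambda_{jk}$ satisfies $\lambda_{jk}\geq\min(\lambda_{1j},\lambda_{1k})$; this already guarantees the existence of the componentwise minimum $\lambda_1\in\Lambda$, which is all the rest of the proof uses.  Second, when analysing the top edge of $\mathcal{N}(\tilde f)$, you write the face polynomial as $z^{n-s}\prod_{i=1}^s(z-c_i\bdx^{\lambda_1})$, which implicitly assumes $\lambda_1\in\bn^d$.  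In general $\lambda_1=\mathbf q/p$ with $p>1$ and the face polynomial has the shape $z^{n-s}\prod_j(z^p-\mu_j\bdx^{\mathbf q})^{m_j}$ from~\eqref{eq-fgamma}; but the elimination criterion of Definition~\ref{eliminated} requires $p=1$, a single root, and $n-s=0$, so the contradiction you draw (all $\tilde\zeta_i$ sharing the same leading monomial forces $\lambda_{ij}>\lambda_1$ for every pair, contradicting $\lambda_1\in\Lambda$) survives unchanged.  With these clarifications, the proof is sound.
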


 \begin{lemma}[{\cite[Lemma 3.16]{GMN:03}}]
If $f$ is a quasi-ordinary polynomial in $\bk[[\bdx]][z]$,
then there exists a system of coordinates 
such that its Newton polyhedron $\mathcal{N}(f)$
is a monotone polygonal path.
\end{lemma}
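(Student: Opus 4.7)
My plan is to combine Proposition~\ref{prop-lu} with induction on the $z$-degree of the Weierstrass part of $f$, showing that the Newton process starting from the $\nu$-proper face never terminates in case~\ref{nw3}. Applying Proposition~\ref{prop-lu}, a change $z \mapsto z-b(\bdx)$ makes $f$ a $\nu$-quasi-ordinary polynomial, so $\mathcal{N}(f)$ has a unique $\nu$-proper face $\Gamma_1$ which is $1$-dimensional and cannot be eliminated, with initial form as in~\eqref{eq-fgamma}. One then follows the chain $\Gamma_1=[A,A_1], \Gamma_2=[A_1,A_2], \dots, \Gamma_s=[A_s,A_{s+1}]$ of successive compact $1$-faces of $\mathcal{N}(f)$ emanating from $A=(\bdn,n)$, terminating in case~\ref{nw1},~\ref{nw2}, or~\ref{nw3}. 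By definition, $\mathcal{N}(f)$ is a monotone polygonal path if and only if we terminate in \ref{nw1} or \ref{nw2}; so the whole content of the lemma is to rule out \ref{nw3} in some system of coordinates.

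To rule out~\ref{nw3}, I would argue that the Newton process transports quasi-ordinariness through Newton maps. Pick any root of $f_{\Gamma_1}$ and apply the Newton map $\sigma_{\Gamma_1,j}$; by Lemma~\ref{total-transf} the total transform is $\bdy^{m\mathbf{p*q}+\mathbf{n*p}} f_{1,\Gamma_1,j}$ with $\deg_{z_2} f_{1,\Gamma_1,j}=m_j<n$, and the chain rule
\[
\frac{\partial(f\circ \sigma _{\Gamma_1, j})}{\partial z_2}(\bdy,z_2 ) = \bdy^{\mathbf{q'}} \left(\frac{\partial f}{\partial z}\circ \sigma _{\Gamma_1, j}\right)(\bdy,z_2 )
\]
together with the multiplicativity of resultants shows that the discriminant of $f_{1,\Gamma_1,j}$ in $z_2$ again equals a monomial times a unit. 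So $f_{1,\Gamma_1,j}$ is again quasi-ordinary, and Proposition~\ref{prop-lu} puts it in $\nu$-quasi-ordinary form after another translation.

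Now suppose for contradiction that the chain for $f$ hits case~\ref{nw3} at some step $s+1$: the cone $\pi_{A_{s+1}}(\mathcal{N}_{<n^{s+1}}(f))$ has more than one vertex, and $\mathcal{N}(f)$ carries a compact face $\Theta$ of dimension $\geq 2$ meeting $A_{s+1}$. Applying the Newton maps along $\Gamma_1,\dots,\Gamma_s$ successively (with matched roots so that $A_{s+1}$ is carried to the new top vertex), the face $\Theta$ is transported to a compact face of dimension $\geq 2$ of the new Newton polyhedron containing its top vertex. But the previous paragraph shows this transformed polynomial is still quasi-ordinary, and by Proposition~\ref{prop-lu} and the induction hypothesis on $z$-degree, there is a translation $z\mapsto z-b(\bdy)$ after which its Newton polyhedron becomes a monotone polygonal path; in particular, its unique $\nu$-proper face is $1$-dimensional. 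This would force $\Theta$ to already be $1$-dimensional, a contradiction.

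The main obstacle will be bookkeeping: the translations $z\mapsto z-b$ produced by iterated applications of Proposition~\ref{prop-lu} modify the Newton polyhedron and the chain $\Gamma_1,\dots,\Gamma_s$, so one must verify that a $2$-dimensional face sitting below $A_{s+1}$ in the original coordinates really does produce a $2$-dimensional face containing the new top vertex after the composite change of coordinates and the Newton maps, rather than being absorbed into the elimination step. A cleaner alternative is to invoke the Abhyankar-Jung theorem to write the roots of $f$ as fractional power series $z_i(\bdx^{1/N})$ whose exponent supports are totally ordered for the componentwise order (this is the classical statement); then $f=\prod(z-z_i)$ and a direct computation of $\mathcal{N}(\prod(z-z_i))$ shows that the total ordering of the supports forces every compact face to be $1$-dimensional, bypassing the inductive bookkeeping altogether.
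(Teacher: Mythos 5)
Your main route rests on an intermediate claim that is false: namely that once Proposition~\ref{prop-lu} has made $f$ $\nu$-quasi-ordinary, case~\ref{nw3} can be ruled out. Being $\nu$-quasi-ordinary only constrains the face at the top vertex, and a quasi-ordinary polynomial can be $\nu$-quasi-ordinary in the given coordinates while its Newton polyhedron still fails to be a monotone polygonal path. Concretely, $f=(z^2-x_1)(z-x_1x_2)(z-x_1x_2-x_1^3)$ is quasi-ordinary (all differences of roots are monomials times units), its unique $\nu$-proper face is the segment $[(0,0,4),(1,0,2)]$ with face polynomial $z^2(z^2-x_1)$, which cannot be eliminated, and yet the hyperplane $\alpha_1+2\alpha_2+3\beta=7$ supports a two-dimensional compact face through $(1,0,2)$ (it contains $(1,0,2),(2,1,1),(4,0,1),(3,2,0),(5,1,0)$), so~\ref{nw3} occurs at the very next step. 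Hence the contradiction you aim for cannot exist. The step where you would derive it also conflates coordinate systems: the induction hypothesis gives a monotone polygonal path for the Newton transform only \emph{after} a further translation $z\mapsto z-b(\bdy)$, and such a translation can perfectly well eliminate a transported face of dimension $\geq 2$ (that is exactly what elimination does, e.g.\ for $z-x_1-x_2$); moreover these downstream changes of variables cannot be pulled back through the Newton maps (which are substitutions $x_i=y_i^{p_i}\cdot(\text{const})$, $z=\bdy^{\mathbf{q'}}(z_2+\text{const})$) to a change of coordinates for $f$ itself. So the ``bookkeeping'' you defer is not a technicality; it is the missing heart of the argument, and in the form you set it up it cannot be completed.

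The ``cleaner alternative'' is also not correct as stated. Abhyankar--Jung does not give roots whose exponent supports are totally ordered: the root $x_1x_2+x_1^3\pm x_1^{7/2}$ of the irreducible quasi-ordinary polynomial $(z-x_1x_2-x_1^3)^2-x_1^7$ has incomparable exponents; what is true is that \emph{differences} of roots are monomials times units. And without a coordinate change the desired conclusion is plainly false, since $z-x_1-x_2$ is quasi-ordinary with a triangle as Newton polyhedron; the lemma asserts the existence of coordinates, so any proof must actually produce the coordinate change. The paper's proof does this by induction on the order in $z$: after Proposition~\ref{prop-lu}, the Factorization Theorem~\ref{teorema1} and Corollary~\ref{cor-factor-gamma} split the Weierstrass part as $h=f_1q$ with $f_1$ $\Gamma_1$-elementary and $q$ quasi-ordinary of strictly smaller order; the inductive change of coordinates for $q$ lives in the \emph{same} variables, does not move $\mathcal{N}_+(f_1)$, and one concludes via the Minkowski sum decomposition of $\mathcal{N}_+(h)$. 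Reworking your argument along these lines, keeping all coordinate changes upstairs instead of transporting faces through Newton maps, is what is needed.
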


\begin{proof}
Since $f(\bdx,z)=\bdx^\bdn h(\bdx,z)u(\bdx,z)$ the Newton diagram $\mathcal{N}_+(f)$ is the Minkowski sum ot each 
of the Newton diagram of its factors it is enough to prove for $h$ which is regular of order $n$ in $z$.

We will work by induction on the order of $h$ in $z$. It is true for $n=1$.
We assume that it is true for all regular polynomial of order strictly less than $n$.
Since $h$ is a regular quasi-ordinary polynomial, by Proposition~\ref{prop-lu}, there exists a change of coordinates 
such that $h$ is $\nu$-quasi-ordinary with a $\nu$-proper face $\Gamma_1$.  
By Factorization Theorem~\ref{teorema1} and Corollary \ref{cor-factor-gamma}
there exists an elementary polynomial  $f_1$ in $\bk[[\bdx]][z]$ with Newton diagram parallel to $\Gamma_1$ and 
a polynomial $q(\bdx,z)$  in $\bk[[\bdx]][z]$ such that $h=f_1q$. 
Since $h$ is quasi-ordinary and $q$ is one of its factors then $q$ is a quasi-ordinary polynomial of order strictly less than $n$.
From the hypothesis, there exists a system of coordinates such that $\mathcal{N} (q)$ is a monotone
polygonal path, and this change of coordinates does not change  the Newton diagram of $f_1$. Using
the fact that the Newton diagram of a product is the Minkowski sum of the Newton diagrams of each factors,
we deduce that the Newton polyhedron of $h$ is a monotone polygonal path.
\end{proof}

In particular there exist P-good coordinates for quasi-ordinary  polynomials $f$.

 \begin{lemma}\cite[Chapter~3, Lemma 3.21]{aclm:05}
If $f$ is quasi-ordinary, after a Newton map $\sigma _{\Gamma, j}$,  
the total transform  $f_{\Gamma_1, j}:=\sigma _{\Gamma_1, j}(f)\in \bk[[\bdy]][z_2]$
 is a quasi-ordinary polynomial in $\bk[[\bdy]][z_2]$.
\end{lemma}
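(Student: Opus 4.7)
The plan is to reduce the quasi-ordinariness of $f_{\Gamma_1,j}$ to that of $f$ via an explicit comparison of the resultants $\res_{z_2}(\sigma(f),\partial_{z_2}\sigma(f))$ and $\res_z(f,\partial f/\partial z)$, where $\sigma:=\sigma_{\Gamma_1,j}$. First, by the multiplicativity of the discriminant and of $\res_z$, together with the decomposition $f = a_0\bdx^{\bdn}hu$ of Notation~\ref{inicial3} (with $h$ Weierstrass and $u$ a unit), I may reduce to the case where $f=h$ is monic of degree $n$ in $z$; the omitted factors contribute only monomials and units and preserve the conclusion.

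For the key computation, by Definition~\ref{newton-map} we have $\sigma(f)(\bdy,z_2) = f(\phi(\bdx),\,az_2+b)$ where $\phi(\bdx):=(\mu_j^{u_1}y_1^{p_1},\dots,\mu_j^{u_d}y_d^{p_d})$, $a:=\bdy^{\mathbf{q'}}$ and $b:=\mu_j^u\bdy^{\mathbf{q'}}$. Two elementary resultant identities combine here: first, ring substitution in the coefficients commutes with the resultant, so $\res_z(f(\phi(\bdx),z),(\partial f/\partial z)(\phi(\bdx),z)) = \phi(\res_z(f,\partial f/\partial z))$; second, over the fraction field $\bk((\bdy))$ where $a$ is invertible, the affine change $z=az_2+b$ multiplies the resultant by $a^{n(n-1)}$. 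Together with the chain rule $\partial_{z_2}\sigma(f)=\bdy^{\mathbf{q'}}\sigma(\partial f/\partial z)$ of Lemma~\ref{total-transf} and the homogeneity $\res_{z_2}(P,cQ)=c^{\deg_{z_2}P}\res_{z_2}(P,Q)$ for $z_2$-constants $c$, this produces
\[
\res_{z_2}\bigl(\sigma(f),\,\partial_{z_2}\sigma(f)\bigr) \;=\; \bdy^{n^2\mathbf{q'}}\cdot\phi\bigl(\res_z(f,\partial f/\partial z)\bigr).
\]

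To finish, the quasi-ordinary hypothesis $\res_z(f,\partial f/\partial z)=\bdx^{\balpha}\eta(\bdx)$ with $\eta(\mathbf{0})\neq 0$ yields $\phi(\bdx^{\balpha})=\mu_j^{\mathbf{u}\cdot\balpha}\bdy^{\mathbf{p}*\balpha}$ and $\phi(\eta)(\mathbf{0})=\eta(\mathbf{0})\neq 0$ (since every $p_i\geq 1$ so $\phi$ sends $\bdy=\mathbf{0}$ to $\bdx=\mathbf{0}$). Hence the right-hand side above is a monomial in $\bdy$ times a unit of $\bk[[\bdy]]$, so $f_{\Gamma_1,j}=\sigma(f)$ is quasi-ordinary. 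The same conclusion then follows for the reduced transform $f_{1,\Gamma_1,j}$, because the extracted factor $\bdy^{m\mathbf{p*q}+\mathbf{n*p}}$ from Lemma~\ref{total-transf} is constant in $z_2$ and therefore alters $\res_{z_2}$ only by a power of that monomial.

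The chief subtlety I expect to address carefully is the use of the affine substitution identity despite $a=\bdy^{\mathbf{q'}}$ vanishing at $\bdy=\mathbf{0}$. This is resolved by performing the resultant computation inside $\bk((\bdy))[z_2]$, where $a$ is invertible, and then observing that after simplification both sides of the displayed identity lie in $\bk[[\bdy]]$, so the equality descends to the formal power series ring. Beyond this point the argument is purely formal, driven entirely by the chain rule of Lemma~\ref{total-transf} and the monomial-times-unit form of the original discriminant.
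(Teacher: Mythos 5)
Your proof is correct. The paper itself gives no argument for this lemma---it simply cites \cite[Chapter~3, Lemma~3.21]{aclm:05}---so there is no in-paper proof to compare against; I will just assess yours. The core computation is sound: writing $\sigma(f)(\bdy,z_2)=f(\phi(\bdy),az_2+b)$ and combining the substitution identity $\res_z(f\circ\phi,g\circ\phi)=\phi(\res_z(f,g))$, the affine scaling $\res_{z_2}\bigl(F(az_2+b),G(az_2+b)\bigr)=a^{\deg F\cdot\deg G}\res_z(F,G)$, and the homogeneity $\res_{z_2}(P,cQ)=c^{\deg_{z_2}P}\res_{z_2}(P,Q)$ yields exactly the stated relation between the two discriminants, and the rest is clean since $\phi(\mathbf{0})=\mathbf{0}$. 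Your handling of the descent from $\bk((\bdy))$ back to $\bk[[\bdy]]$ is also the right point to flag and dispose of.

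Two small remarks. First, the opening reduction to the Weierstrass factor is an unnecessary detour: your displayed identity $\res_{z_2}(\sigma(f),\partial_{z_2}\sigma(f))=\bdy^{n^2\mathbf{q'}}\phi(\res_z(f,\partial f/\partial z))$ holds verbatim for \emph{any} $f\in\bk[[\bdx]][z]$ of $z$-degree $n$ (the affine substitution formula does not require monicity, only that the $z$-leading coefficient stay nonzero, which it does because $\phi$ is injective on $\bk[[\bdx]]$). Second, the reduction as stated glosses over a point: the unit factor $u$ from Weierstrass preparation does \emph{not} in general have monomial-times-unit discriminant; it does here only because $f$ being quasi-ordinary forces its factor $u$ to be quasi-ordinary as well (using that $\bk[[\bdx]]$ is a UFD and the multiplicativity formula $\Delta(PQ)=\pm\Delta(P)\Delta(Q)\res(P,Q)^2$). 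Since you do not actually need the reduction, the cleanest fix is simply to drop it and run the resultant computation directly on $f$.
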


Furthermore the  \textbf{depth} of a quasi-ordinary  polynomial $f$ in P-good coordinates 
was defined similarly as in \cite[Chapter~3, Definition~24]{aclm:05}, and it decreases after Newton maps.

\begin{lemma}\label{lema-ends}
Let  $f\in \bk[[\bdx]][z]$ be a polynomial   as in Notation{\rm~\ref{inicial3}}. 
If
$f$ is quasi-ordinary then there exists a system of suitable coordinates such that
its Newton tree $\mathcal{T}(f)$ has only arrow-heads decorated with $(0)$ and $(1)$ and has no black boxes.
\end{lemma}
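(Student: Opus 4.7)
The plan is to proceed by induction on the $z$-degree $n$ of the Weierstrass part of $f$.

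In the inductive step, the first move is to invoke the quoted lemma from \cite{GMN:03} to choose a coordinate system in which $\mathcal{N}(f)$ is a monotone polygonal path; after possibly applying an additional change $z\mapsto z+a(\bdx)$ to eliminate a removable face, the system becomes suitable in the sense of Definition~\ref{suitable}. In such coordinates the first vertical line $\mathcal{T}_\mathcal{N}(f)$ arises from Case~\ref{caso3a}, and the monotonicity of the polyhedron rules out Case~NW3 at its bottom, so no black box is attached there.

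Next, for each edge $\Gamma_\ell$ of the polygonal path and each root $\mu_j^\ell$ of $f_{\Gamma_\ell}$, the Newton map $\sigma_{\Gamma_\ell,\mu_j^\ell}$ produces a total transform $f_{\Gamma_\ell,j}$. The quoted lemma from \cite{aclm:05} states this is again quasi-ordinary; combining Lemma~\ref{total-transf} with the non-eliminability of each $\Gamma_\ell$ in suitable coordinates gives that its Weierstrass part has $z$-degree $m_j^\ell<n$ (otherwise $f_{\Gamma_\ell}$ would be of the form $\bdx^{\mathbf{m}}(z-a\bdx^{\mathbf{q}})^n$, contradicting Definition~\ref{suitable}). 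The inductive hypothesis then yields, for each $f_{\Gamma_\ell,j}$, a suitable coordinate system in which $\mathcal{T}(f_{\Gamma_\ell,j})$ contains only arrow-heads with decorations in $\{(0),(1)\}$ and no black boxes. Gluing these subtrees to $\mathcal{T}_\mathcal{N}(f)$ at each $v_\ell$ via horizontal edges, as in Step~\ref{paso3} of the construction, produces $\mathcal{T}(f)$, which inherits the absence of black boxes.

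It remains to analyse the bottom arrow-head of $\mathcal{T}_\mathcal{N}(f)$, decorated with $(n^{s+1})$. Here the quasi-ordinary hypothesis is used crucially: since $\Delta_z(f)$ is a monomial times a unit, hence nonzero, and since $\Delta_z(z^k)=0$ for $k\ge 2$, the multiplicativity of the discriminant forces the Weierstrass part $h=z^n+a_1z^{n-1}+\dots+a_n$ to satisfy $(a_n,a_{n-1})\ne(0,0)$. If $a_n\ne 0$ then $z\nmid h$, the polygonal path reaches the hyperplane $\{z=0\}$, Case~NW1 applies, and $n^{s+1}=0$. If instead $a_n=0$ and $a_{n-1}\ne 0$, then $h=z\cdot h'$ with $h'(\bdx,0)\ne 0$, so the minimum $z$-coordinate attained in the support of $h$ equals $1$, Case~NW2 applies, and $n^{s+1}=1$.

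The main obstacle I anticipate is the final step, namely translating the algebraic fact ``$\Delta_z(f)\ne 0$ forbids a $z^2$-factor'' into the geometric statement on the terminal vertex of the monotone polygonal path, and checking that it applies uniformly at every stage of the induction (which it does, since Newton maps preserve quasi-ordinarity).
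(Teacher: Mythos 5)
Your proposal is a genuine reconstruction rather than the paper's proof: the paper disposes of Lemma~\ref{lema-ends} with a one-sentence appeal to the construction of Newton trees for quasi-ordinary polynomials in P-good coordinates in \cite[Chapter~3, Section~3.26]{aclm:05}, while you give a self-contained induction on the degree of the Weierstrass part. The skeleton of your argument is sound: the cited lemma from \cite{GMN:03} together with Proposition~\ref{prop-lu} puts $\mathcal{N}(f)$ in monotone-polygonal-path form, the lemma preceding~\eqref{f-factor} then rules out Case~\ref{nw3} (hence black boxes) on the first vertical line, the cited lemma from \cite{aclm:05} propagates quasi-ordinarity through Newton maps, the degree drop $m_j^\ell<n$ (from Lemma~\ref{total-transf} plus non-eliminability of $\Gamma_\ell$) feeds the induction, and the discriminant calculation correctly pins the bottom decoration to $(0)$ or $(1)$: $\Delta_z(f)\ne 0$ forbids $z^2\mid h$, so $(B_n,B_{n-1})\ne(0,0)$, giving $n^{s+1}\in\{0,1\}$ via Cases~\ref{nw1} or~\ref{nw2}.

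Two points deserve attention. First, you treat ``make $\mathcal{N}(f)$ a monotone polygonal path'' and ``make the coordinates suitable'' as separable steps, and propose a single change $z\mapsto z+a(\bdx)$ to fix suitability. This is not obviously harmless: eliminating a removable face can in principle destroy the monotone-polygonal-path condition (a $2$-dimensional compact face can appear), and the elimination may need to be iterated. The paper handles exactly this interaction with the P-good coordinate machinery of \S\ref{P-good-coor-sec}, in particular Proposition~\ref{prop-p-coord}, which shows that starting from suitable coordinates with a monotone polygonal path one reaches P-good coordinates by a uniquely determined sequence of such changes; alternatively, note that the paper's proof of the \cite{GMN:03} lemma already produces coordinates that are simultaneously $\nu$-quasi-ordinary and monotone polygonal. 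Second, you should state the base cases $n\le 1$ explicitly (they produce the trivial tree of Case~\ref{caso1a}), since for $n\ge 2$ your discriminant argument already forces a nontrivial polyhedron. A shared imprecision with the paper, not a flaw in your argument alone, is that the top arrow-head of the first vertical line is decorated with $\bdn$, which need not be $(0)$ or $(1)$; the intended reading is that all \emph{bottom} arrow-heads carry decoration $(0)$ or $(1)$.
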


\begin{proof}
In fact in \cite[Chapter~3, Section~3.26]{aclm:05} it was defined Newton tree $\mathcal{T}(f)$ for 
of a quasi-ordinary  polynomials $f$ in P-good coordinates and in this tree all  ends 
are arrow-heads and if an arrow-head is not a dead-end then it is decorated with~$(1)$.
\end{proof}

\mbox{}

To prove the converse 
it is enough to prove that 
$f\in \bk[[\bdx]][z]$ and its \emph{polar} $f_z:=\frac{\partial f}{\partial z}\in \bk[[\bdx]][z]$ 
are  separated polynomials because after Theorem \ref{thm-sepcom} they are comparable series.

We assume that there is a system of suitable coordinates such that 
$\mathcal{T} (f)$ has no black box and arrowheads decorated with $(0)$ and $(1)$. We consider coordinates such that the Newton tree is 
$\mathcal{T}_P (f)$. Since P-good coordinates are suitable coordinates, these coordinates are suitable for $ff_z$.

Given a linear form $D$ given by $\sum_{j=1}^d a_j \alpha_j+ b\beta$, $a_j,b\in\bn$,
we define $f_D(\bdx,z)$ as the sum of the monomials $c_{\balpha,\beta}\bdx^{\alpha}z^{\beta}$,
$c_{\balpha,\beta}\neq 0$, for which $D(\balpha,\beta)$ is minimal. Note that $f_D$ is not a monomial
if and only there is a face of the Newton polyhedron contained in an affine hyperplane defined by~$D$.

\begin{lemma}
Assume that $D$ is a linear form such that 
$$
f_D(\bdx,z)=\bdx^{\mathbf{A}}z^B,
$$
with $B\neq 0$, then there is no face of $\mathcal{N}(f_z)$ contained in an affine hyperplane defined by~$D$.
\end{lemma}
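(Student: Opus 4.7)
My plan is to establish the lemma by showing that the $D$-initial form $(f_z)_D$ is itself a single monomial; this forces the $D$-face of $\mathcal{N}_+(f_z)$ to be a vertex, so no positive-dimensional face of $\mathcal{N}(f_z)$ can lie in the affine hyperplane of $\mathbb{R}^{d+1}$ that supports $\mathcal{N}_+(f_z)$ in the direction of $D$.

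Writing $D(\balpha,\beta)=\sum_{j=1}^{d}a_j\alpha_j+b\beta$, the hypothesis $f_D(\bdx,z)=\bdx^{\mathbf A}z^{B}$ says that the minimum of $D$ on $\supp(f)$ is attained at the single point $(\mathbf A,B)$, and the condition $B\neq 0$ (with $B\in\bn$) gives $B\ge 1$. Since $\bk$ has characteristic zero, differentiating $c_{\balpha,\beta}\bdx^{\balpha}z^{\beta}$ yields $\beta c_{\balpha,\beta}\bdx^{\balpha}z^{\beta-1}$, which is nonzero iff $c_{\balpha,\beta}\neq 0$ and $\beta\ge 1$; no cancellation occurs, so
\[
\supp(f_z)=\bigl\{(\balpha,\beta-1):(\balpha,\beta)\in\supp(f),\ \beta\ge 1\bigr\}.
\]
Because $D(\balpha,\beta-1)=D(\balpha,\beta)-b$, minimizing $D$ on $\supp(f_z)$ is equivalent, up to the constant $-b$, to minimizing $D$ on the subset $\{(\balpha,\beta)\in\supp(f):\beta\ge 1\}$ of $\supp(f)$.

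The conclusion is now immediate: since $(\mathbf A,B)$ is the unique minimizer of $D$ on $\supp(f)$ and, thanks to $B\ge 1$, already lies in the restricted subset, it remains the unique minimizer there. Shifting by $(\mathbf 0,-1)$, the point $(\mathbf A,B-1)$ is the unique minimizer of $D$ on $\supp(f_z)$, so $(f_z)_D=B\,\bdx^{\mathbf A}z^{B-1}$ is a monomial, as required. I foresee no substantive obstacle in this plan: the two places that need some care are the use of characteristic zero (which prevents cancellation when forming $\supp(f_z)$) and the hypothesis $B\neq 0$ (which is precisely what guarantees that the distinguished point of $\supp(f)$ survives differentiation and gives rise to the unique $D$-minimizer of $\supp(f_z)$).
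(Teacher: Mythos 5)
Your argument is correct and is essentially the paper's: both show that the unique $D$-minimizer $(\mathbf{A},B)$ of $\supp(f)$ survives differentiation (as $(\mathbf{A},B-1)$, using $B\ge 1$ and characteristic zero) and remains the unique $D$-minimizer of $\supp(f_z)$, so $(f_z)_D$ is a monomial and the corresponding face is a vertex. The paper phrases this by writing out $f$ and $f_z$ term by term over the region $D_+$; you phrase it at the level of supports, but the content is the same.
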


\begin{proof}
Denote by $D_+$ the space limited by the first quadrant and the affine hyperplane defined by~$D$ 
which hits the Newton polyhedron of $f$ and doesn't contain the origin. 
We have
$$
f(\bdx,z)=f_D(\bdx,z)+\sum_{(\balpha,\beta)\in D_+,\beta>0}
c_{\balpha,\beta}\bdx^{\balpha}z^{\beta}+\sum_{(\balpha,\beta)\in D_+,\beta=0}
c_{\balpha,\beta}\bdx^{\balpha}z^{\beta},
$$
$$f_z(\bdx,z)=B\bdx^{\mathbf{A}}z^{B-1}+\sum_{(\balpha,\beta)\in D_+,\beta>0}
\beta c_{\balpha,\beta}\bdx^{\balpha}z^{\beta-1}.
$$
Then, there is no face of the Newton diagram of $f_z$ in an affine hyperplane defined by~$D$.
\end{proof}

\begin{lemma}
 Assume  $\Gamma_\ell$ is a face of the Newton diagram of $f$ 
which doesn't hit the hyperplane $\{z=0\}$ with face polynomial,
see{\rm~\eqref{eq-fgamma-ell}},
$$
f_{\Gamma_\ell }=
a_{\Gamma_\ell }\bdx^{\mathbf{n^\ell}}z^{n^\ell_{d+1}}\prod _{j=1}^{k^\ell}(z^{p_\ell}-\mu _j^\ell 
\bdx^{\mathbf{q^\ell}})^{m_j^\ell},
$$
the factors being irreducible in $\bk[[\bdx]][z]$, 
i.e. $\gcd (
\mathbf{q^\ell},p_\ell)=1$, $\mu_j^\ell\in \bk^*$ with $\mu_j^\ell\ne \mu_i^\ell$ and $a_{\Gamma_\ell }\in \bk^*$.

Then the Newton diagram of $f_z$ has a face $\Gamma'_\ell$  parallel to $\Gamma_\ell$ and
the corresponding initial form $(f_{z})_{\Gamma'_\ell }$ is
$$
a_{\Gamma_\ell }\bdx^{\mathbf{n^\ell}}\!\!   z^{n^\ell_{d+1}-1}
\!\!\!\left(\prod _{j=1}^{k^\ell}(z^{p_\ell}-
\mu _j^\ell 
\bdx^{\mathbf{q^\ell}})^{m_j^\ell-1}\right)\!\!\!
\left(\sum _{i=1}^{k^\ell}\left((n^\ell_{d+1}+p_\ell m_i^\ell)z^{p_\ell}
-n^\ell_{d+1}\mu_i^\ell \bdx^{\mathbf{q^\ell}}\right)\prod _{j\neq i}
(z^{p_\ell}-\mu _j^\ell \bdx^{\mathbf{q^\ell}})\right)\!\!.
$$
\end{lemma}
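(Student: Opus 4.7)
The plan is to identify $(f_z)_{\Gamma'_\ell}$ with $\partial_z f_{\Gamma_\ell}$ for a face $\Gamma'_\ell$ parallel to $\Gamma_\ell$, and then to compute that derivative in closed form.

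For the identification step, I would fix an affine linear form $D(\balpha,\beta)=\sum_i a_i\alpha_i+b\beta$ that supports $\Gamma_\ell$, so that $D\equiv N$ on $\Gamma_\ell$ and $D>N$ elsewhere on $\supp(f)$. Differentiation in $z$ sends a monomial $\bdx^{\balpha}z^{\beta}$ to $\beta\bdx^{\balpha}z^{\beta-1}$; this vanishes when $\beta=0$ and otherwise has $D$-value $D(\balpha,\beta)-b$. The hypothesis that $\Gamma_\ell$ avoids $\{z=0\}$ ensures that every monomial of $f_{\Gamma_\ell}$ has $z$-exponent at least $n^\ell_{d+1}\ge 1$, hence survives differentiation; on the other hand, no off-$\Gamma_\ell$ monomial of $f$ can descend to $D$-value $N-b$ after differentiation since its $D$-value started strictly above $N$. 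Because $f_{\Gamma_\ell}$ is weighted homogeneous with respect to every supporting form of $\Gamma_\ell$, so is $\partial_z f_{\Gamma_\ell}$ (with each value shifted by $-b$), so the support of $\partial_z f_{\Gamma_\ell}$ lies on a line parallel to $\Gamma_\ell$. This provides the face $\Gamma'_\ell$ of $\mathcal{N}(f_z)$ with $(f_z)_{\Gamma'_\ell}=\partial_z f_{\Gamma_\ell}$.

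For the explicit computation, write $U_j:=z^{p_\ell}-\mu_j^\ell\bdx^{\mathbf{q^\ell}}$. Logarithmic differentiation yields
\[
\partial_z f_{\Gamma_\ell}=f_{\Gamma_\ell}\!\left(\frac{n^\ell_{d+1}}{z}+\sum_{i=1}^{k^\ell}\frac{p_\ell m_i^\ell\,z^{p_\ell-1}}{U_i}\right),
\]
and pulling the common factor $a_{\Gamma_\ell}\bdx^{\mathbf{n^\ell}}z^{n^\ell_{d+1}-1}\prod_j U_j^{m_j^\ell-1}$ out front leaves the bracketed factor $n^\ell_{d+1}\prod_j U_j+p_\ell z^{p_\ell}\sum_i m_i^\ell\prod_{j\neq i}U_j$. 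Using the substitution $z^{p_\ell}=U_i+\mu_i^\ell\bdx^{\mathbf{q^\ell}}$ inside each summand of the second term should then allow one to regroup the bracket as the sum $\sum_i\bigl[(n^\ell_{d+1}+p_\ell m_i^\ell)z^{p_\ell}-n^\ell_{d+1}\mu_i^\ell\bdx^{\mathbf{q^\ell}}\bigr]\prod_{j\neq i}U_j$ claimed in the statement.

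The main obstacle will be this final algebraic rearrangement, which has to be done carefully because the two expressions for the bracketed factor are not termwise equal and one must verify that the regrouping into a single sum over $i$ reproduces the stated weighted contributions. The first stage, namely the convexity argument identifying the parallel face $\Gamma'_\ell$, is a routine Newton-polyhedron bookkeeping once the supporting form $D$ is in hand, and it uses the hypothesis $n^\ell_{d+1}\ge 1$ in exactly one place: to guarantee that no monomial of $f_{\Gamma_\ell}$ is annihilated by $\partial_z$ and consequently that $\Gamma'_\ell$ is a genuine one-dimensional face parallel to $\Gamma_\ell$.
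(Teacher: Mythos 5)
Your plan for the first part is correct and matches the paper's own (very terse) "Same computation as before": you pass to a supporting linear form $D$ for $\Gamma_\ell$, differentiate the decomposition of $f$ into its $\Gamma_\ell$-part plus strictly-higher $D$-level terms, and observe that differentiation of the $\Gamma_\ell$-part survives (since $n^\ell_{d+1}\ge 1$) and drops the $D$-value uniformly by $b$, while the remainder stays strictly above. This gives $(f_z)_{\Gamma'_\ell}=\partial_z f_{\Gamma_\ell}$, which is all the paper actually proves.

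The gap is in the final regrouping step, which you flag as "the main obstacle" but then assert "should" work. It does not. Writing $U_j:=z^{p_\ell}-\mu_j^\ell\bdx^{\mathbf{q^\ell}}$, $N:=n^\ell_{d+1}$, $p:=p_\ell$, your logarithmic differentiation correctly gives the bracket
$$
N\prod_{j=1}^{k^\ell}U_j\;+\;p\,z^{p}\sum_{i=1}^{k^\ell}m_i^\ell\prod_{j\ne i}U_j,
$$
whereas the formula claimed in the statement, once one substitutes $(N+pm_i^\ell)z^{p}-N\mu_i^\ell\bdx^{\mathbf{q^\ell}}=NU_i+pm_i^\ell z^{p}$ term by term, evaluates to
$$
N\sum_{i=1}^{k^\ell}U_i\prod_{j\ne i}U_j\;+\;p\,z^{p}\sum_{i=1}^{k^\ell}m_i^\ell\prod_{j\ne i}U_j
\;=\;Nk^\ell\prod_{j=1}^{k^\ell}U_j\;+\;p\,z^{p}\sum_{i=1}^{k^\ell}m_i^\ell\prod_{j\ne i}U_j,
$$
since $\sum_{i}U_i\prod_{j\ne i}U_j=k^\ell\prod_j U_j$. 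So the two expressions differ by an overcount of $k^\ell$ in the first term and agree only when $k^\ell=1$. A tiny check confirms this: with $k^\ell=2$, $m_1^\ell=m_2^\ell=1$, $N=p=1$, $\mu_1=1$, $\mu_2=2$, $d=1$, $q=1$ one has $f_\Gamma=z(z-x)(z-2x)=z^3-3z^2x+2zx^2$, so $\partial_z f_\Gamma=3z^2-6zx+2x^2$, whereas the displayed sum gives $(2z-x)(z-2x)+(2z-2x)(z-x)=4z^2-9zx+4x^2$. So the regrouping you propose cannot reproduce the paper's display; what is actually true (and what your differentiation already yields) is the unregrouped form $a_{\Gamma_\ell}\bdx^{\mathbf{n^\ell}}z^{N-1}\bigl(\prod_j U_j^{m_j^\ell-1}\bigr)\bigl(N\prod_j U_j+pz^{p}\sum_i m_i^\ell\prod_{j\ne i}U_j\bigr)$. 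You should either stop there or note explicitly that the closed form as printed is off by a factor $k^\ell$ in the $N\prod U_j$ contribution; either way, what matters downstream (the factor $\prod U_j^{m_j^\ell-1}$ and the $z$-degree $k^\ell p_\ell$ of the remaining bracket) is unaffected.
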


\begin{proof}
Same computation as before.
\end{proof}

\begin{lemma}
Assume $\Gamma_\ell$ is the face of the Newton diagram of $f$ which hits the hyperplane $\{z=0\}$ with face polynomial
$$
f_{\Gamma_\ell}=a_{\Gamma_\ell}\bdx ^{\bdn_\ell} \prod_{j=1}^{k^\ell} (z^{p_\ell}-\mu_j^\ell\bdx ^{\mathbf{q}_\ell})^{m_j^\ell}.
$$
Then if $k^l>1$ or if $k^l=1$ and $m^l_1>1$ the Newton diagram of $f_z$ has a face parallel to $\Gamma_\ell$, 
denoted by $\Gamma'_\ell$ and
\begin{equation}\label{eq-newton-fz}
(f_z)_{\Gamma'_\ell}= p_\ell a_{\Gamma_\ell}\bdx^{\bdn_\ell}z^{p_\ell-1}
\prod _{j=1}^{k^\ell} (z^{p_\ell}-\mu_j^\ell\bdx ^{\mathbf{q}_\ell})^{m_j^\ell-1}
\left(\sum_{i=1}^{k^\ell}m_i^\ell\prod_{j\neq i}(z^{p_\ell}-\mu_j^\ell\bdx^{\mathbf{q}}_\ell)\right). 
\end{equation}
If $k^\ell=1$ and $m_1^\ell=1$ then the Newton diagram of $f_z$ has no face parallel to $\Gamma_\ell$.
\end{lemma}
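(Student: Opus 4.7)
The plan is a direct computation combined with a support analysis. First, I would compute $\partial f_{\Gamma_\ell}/\partial z$ by applying the product rule to the factorization of $f_{\Gamma_\ell}$ given in the hypothesis and factoring out the common terms, which yields exactly the right-hand side of \eqref{eq-newton-fz}. This step is purely routine calculus.

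Next, I would justify why this expression coincides with the face polynomial $(f_z)_{\Gamma'_\ell}$ for a face of $\mathcal{N}(f_z)$ parallel to $\Gamma_\ell$, whenever such a face exists. Let $L$ denote a positive linear form whose minimum on $\mathcal{N}_+(f)$ is attained exactly on $\Gamma_\ell$, say with value $L_0$. The monomials of $f_{\Gamma_\ell}$ all satisfy $L(\balpha,\beta)=L_0$, while every other monomial of $f$ satisfies $L(\balpha,\beta)>L_0$. Differentiation $\partial/\partial z$ sends a monomial $\bdx^{\balpha}z^\beta$ (with $\beta>0$) to a scalar multiple of $\bdx^{\balpha}z^{\beta-1}$, shifting the $L$-value uniformly by $-L(\mathbf{0},1)$. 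Hence the surviving monomials coming from $f_{\Gamma_\ell}$ all land on a single affine line parallel to $\Gamma_\ell$, while monomials coming from $f-f_{\Gamma_\ell}$ still have strictly larger $L$-value after the shift. Thus the $L$-minimizing monomials of $f_z$ are exactly the support of $\partial f_{\Gamma_\ell}/\partial z$, and this support determines $\Gamma'_\ell$.

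Finally, I would decide when this support actually defines a one-dimensional face of $\mathcal{N}(f_z)$, i.e.\ when $\partial f_{\Gamma_\ell}/\partial z$ fails to be a monomial. Reading off $z$-degrees in the explicit product: the lowest $z$-degree appearing is $p_\ell-1$ and the highest is $p_\ell m^\ell-1$, where $m^\ell=\sum_j m_j^\ell$. These coincide precisely when $m^\ell=1$, which forces $k^\ell=1$ and $m_1^\ell=1$, and in that case direct substitution gives $\partial f_{\Gamma_\ell}/\partial z = p_\ell a_{\Gamma_\ell}\bdx^{\bdn_\ell}z^{p_\ell-1}$, a single monomial, so no face parallel to $\Gamma_\ell$ survives. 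In all other cases, $p_\ell-1<p_\ell m^\ell-1$ and the support spans at least two lattice points, producing the face $\Gamma'_\ell$ with the claimed face polynomial.

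The only real subtlety, rather than a genuine obstacle, is bookkeeping around the endpoint of $\Gamma_\ell$ lying on $\{z=0\}$: the corresponding monomial has $\beta=0$ and is annihilated by $\partial/\partial z$, so $\Gamma'_\ell$ is strictly shorter than the naive translate of $\Gamma_\ell$. This is exactly what makes the case $k^\ell=m_1^\ell=1$ exceptional here, in contrast with the previous lemma (for faces not hitting $\{z=0\}$) where a parallel face of $\mathcal{N}(f_z)$ always exists.
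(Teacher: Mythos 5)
Your computation of $\partial f_{\Gamma_\ell}/\partial z$ and the supporting-linear-form argument are exactly what the paper intends (the proof is left to the reader there, as ``straightforward'' / ``same computation as before''), and the formula~\eqref{eq-newton-fz} itself is a correct routine derivative.

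There is, however, a gap in your third paragraph which in fact is also present in the paper's statement of the lemma. You assert that the lowest $z$-degree appearing in~\eqref{eq-newton-fz} is always $p_\ell-1$, so that the support contains at least two points as soon as $m^\ell:=\sum_j m_j^\ell\geq 2$. But the coefficient of $z^{p_\ell-1}$ is, up to a nonzero monomial in $\bdx$, equal to $\sum_i m_i^\ell\prod_{j\neq i}(-\mu_j^\ell)$, and this can vanish. Setting $t:=z^{p_\ell}$ and $P(t):=\prod_j(t-\mu_j^\ell)^{m_j^\ell}$, the cofactor in~\eqref{eq-newton-fz} is the homogenization in $(t,\bdx^{\mathbf{q}_\ell})$ of $P'(t)$; it collapses to a single monomial $m^\ell t^{m^\ell-1}$ precisely when $P(t)=t^{m^\ell}+c$ for some $c\neq 0$, i.e.\ when all $m_j^\ell=1$, $k^\ell=m^\ell$, and the $\mu_j^\ell$ are exactly the distinct $m^\ell$-th roots of $-c$. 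Concretely, for $f=z^6-x^2=(z^3-x)(z^3+x)$ (a quasi-ordinary polynomial, in P-good coordinates, with $p_\ell=3$, $q_\ell=1$, $k^\ell=2$, $m_1^\ell=m_2^\ell=1$) one has $f_z=6z^5$, a monomial, so $\mathcal{N}(f_z)$ has \emph{no} face parallel to $\Gamma_\ell$ even though $k^\ell>1$. Thus the stated dichotomy between ``$k^\ell>1$ or $m_1^\ell>1$'' and ``$k^\ell=m_1^\ell=1$'' is not quite correct. Note that \eqref{eq-newton-fz} and the identity $\gcd\bigl((f_z)_{\Gamma'_\ell},f_{\Gamma_\ell}\bigr)=\prod_j(z^{p_\ell}-\mu_j^\ell\bdx^{\mathbf{q}_\ell})^{m_j^\ell-1}$ remain valid, and it is the latter that the subsequent separation argument actually relies on; but the bare existence of the face $\Gamma'_\ell$ does not follow from $k^\ell>1$ alone, and neither your proof nor the paper accounts for this degenerate case.
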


\begin{remark}
If $k^\ell=1$ and $m_1^\ell=1$ then $p_\ell>1$ since we are in P-good coordinates for $f$. 
Then either $f_z$ is divisible by $z^{p_\ell-1}$ or $\mathcal{N}(f_z)$ has faces which are not faces of $f$.
The proof of the above Lemma is straightforward.
\end{remark}

\begin{cor}
Consider the bicoloured Newton tree $\mathcal{T}(ff_z)$ coloured blue for $f$ and red for~$f_z$. 
Denote by $v_1,\cdots,v_n$ the vertices on the first vertical line. 

There exists $n_0$, $1\leq n_0\leq n$ such that for $1\leq j\leq n_0$ 
there is a blue or blue-red horizontal edge attached to $v_j$ and for $n_0<j\leq n$ 
there is no blue, neither blue-red horizontal edge attached to $v_j$. 

Attached to $v_j$, $1\leq j\leq n_0$, there are eventually blue horizontal edges ending with an arrow and/or blue-red horizontal edges ending with a vertex and always red horizontal edges.
\end{cor}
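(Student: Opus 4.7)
The plan is to combine the two preceding lemmas (giving $(f_z)_{\Gamma'_\ell}$ explicitly in terms of $f_{\Gamma_\ell}$) with a geometric analysis of the Minkowski sum $\mathcal{N}(ff_z) = \mathcal{N}(f) + \mathcal{N}(f_z)$. Since the vertices $v_j$ are ordered from above to below by increasing slope, I would prove the corollary by tracking from which polyhedron each slope originates.

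First, I would classify the $v_j$. By the preceding lemmas every face $\Gamma_\ell$ of $\mathcal{N}(f)$ has a parallel face $\Gamma'_\ell$ in $\mathcal{N}(f_z)$ (the exceptional bottom case $k^n=m_1^n=1$ being ruled out by the P-good assumption), and at the corresponding vertex of $\mathcal{T}(ff_z)$ the face polynomial of $ff_z$ inherits $f_{\Gamma_\ell}$ as a factor. Its $k^\ell\geq 1$ roots $\mu_j^\ell$ then produce blue horizontal edges (when $m_j^\ell=1$) or blue-red ones (when $m_j^\ell\geq 2$, since by the formula $(f_z)_{\Gamma'_\ell}$ has multiplicity $m_j^\ell-1$ at $\mu_j^\ell$). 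Conversely, any vertex whose slope appears only in $\mathcal{N}(f_z)$ sees $f$ contribute just a monomial to the face polynomial, so only red-only horizontal edges can occur.

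The central step, which I expect to be the main obstacle, is to prove that every ``$\mathcal{N}(f_z)$-only'' slope is strictly gentler than all slopes of $\mathcal{N}(f)$, so that the corresponding $v_j$ lie at the bottom of the vertical line. The key geometric observation is that taking $\partial/\partial z$ eliminates precisely the $z^0$-terms of $f$ and shifts the rest down by one in $z$: since $\mathcal{N}(f)$'s bottom-right vertex lies on $\{z=0\}$ and therefore vanishes in $\mathcal{N}(f_z)$, the new bottom-right vertex of $\mathcal{N}(f_z)$ is pulled ``above-left'' of the shifted location, and any face of $\mathcal{N}(f_z)$ joining it to the last common vertex is forced to have gentler slope than the bottom face of $\mathcal{N}(f)$. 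This identifies $n_0$ as the number of faces of $\mathcal{N}(f)$ which contribute a slope to $\mathcal{N}(ff_z)$.

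Finally, I would verify the existence of red horizontal edges at each common vertex $v_\ell$ with $\ell\leq n_0$. The formula from the preceding lemma writes the polynomial part of $(f_z)_{\Gamma'_\ell}$ as $\prod_j(t-\mu_j^\ell)^{m_j^\ell-1}\cdot\tilde{S}(t)$, where
\[
\tilde{S}(t)=\sum_{i}\bigl((n^\ell_{d+1}+p_\ell m_i^\ell)t-n^\ell_{d+1}\mu_i^\ell\bigr)\prod_{j\neq i}(t-\mu_j^\ell).
\]
The direct substitution $\tilde{S}(\mu_i^\ell)=p_\ell m_i^\ell\mu_i^\ell\prod_{j\neq i}(\mu_i^\ell-\mu_j^\ell)\neq 0$ shows that the roots of $\tilde{S}$ are all distinct from the $\mu_j^\ell$'s, so they correspond precisely to the red-only horizontal edges claimed by the corollary; the case $n^\ell_{d+1}=0$ (bottom face) is handled analogously, using the $k^n\geq 2$ or $m_1^n\geq 2$ ensured by the P-good hypothesis.
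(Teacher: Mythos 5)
Your approach is essentially the paper's (implicit) one: classify the colors at each vertex via the explicit initial forms from the two lemmas and the Minkowski-sum structure of $\mathcal{N}_+(ff_z)$, use the monomial lemma to push the $f_z$-only slopes to the bottom, and show the ``extra'' factor $\tilde S$ has roots disjoint from the $\mu_i^\ell$. The verification $\tilde S(\mu_i^\ell)=p_\ell m_i^\ell\mu_i^\ell\prod_{j\neq i}(\mu_i^\ell-\mu_j^\ell)\neq 0$ is a welcome explicit step that the paper only asserts.

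There is, however, a genuine gap in your treatment of the bottom face. You claim the case $k^n=m_1^n=1$ is ``ruled out by the P-good assumption,'' and more generally that P-good guarantees $k^n\geq 2$ or $m_1^n\geq 2$. This misreads P-good: conditions (2) and (3) of that definition constrain only faces supported on lines of the form $\alpha_i+q_i\beta=N_i$, i.e.\ faces with $p=1$. The paper's own Remark after the bottom-face lemma says exactly the converse of your claim: it asserts that if $k^\ell=m_1^\ell=1$ then necessarily $p_\ell>1$; it does \emph{not} say that $k^\ell=m_1^\ell=1$ is impossible. Indeed, an example such as $f=z^3-x^2$ (quasi-ordinary, Weierstrass, P-good) has $k=m_1=1$, $p=3$ on its unique face, and then $f_z=3z^2$ contributes no red horizontal edge at $v_1$; the same happens whenever $k^n=1$, since even for $m_1^n\geq 2$ the bottom-face lemma gives a ``red factor'' $\tilde S(t)=\sum_i m_i^n\prod_{j\neq i}(t-\mu_j^n)$ of degree $k^n-1=0$, a nonzero constant with no roots. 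So your ``analogous handling'' of the bottom face does not produce the red edge claimed in the last clause of the corollary, and the invocation of P-good to exclude the problematic subcase is incorrect; the argument needs to separately address the bottom vertex with $k^n=1$ rather than dispatch it with P-good.
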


The  horizontal edges corresponding to
 the roots coming from the factor 
$$\sum _{i=1}^{k^\ell}\left((n^\ell_{d+1}+p_\ell m_i^\ell)z^{p_\ell}
-n^\ell_{d+1}\mu_i^\ell \bdx^{\mathbf{q^\ell}}\right)\prod _{j\neq i}
(z^{p_\ell}-\mu _j^\ell \bdx^{\mathbf{q^\ell}}),
 $$ of $(f_{z})_{ \Gamma'_\ell }$
 are red-coloured. The degree in $z$ is $k^\ell p_{\ell}$. 

\bigskip

\begin{proof}[Proof of $\Leftarrow)$ of Theorem{\rm~\ref{main2}}]
We use induction on $\delta:=\depth(f)$.
Let us start with the case $\delta=0$. 
We have $f=\bdx^{\bdn}z$ up to a unit and $f_z=\bdx^{\bdn}$. Then $f$ and $f_z$ are separated. 

For $\delta=1$, 
consider a vertex $v_j$ of the tree of $ff_z$. 
If $1<j\leq n_0$, there are exactly $k_j$ blue horizontal edges attached to $v_j$ ending by an arrow, and some red edges, but no blue-red edges. For  $n_0<j\leq n$, there are only red edges. Then $f$ and $f_z$ are separated.

Assume that for any $g$ in P-good coordinates with $\mathcal{T}_P(g)$ 
with no black box and arrowheads decorated with $(0)$ and $(1)$ of depth $\delta'<\delta$, $g$ and $g_z$ are separated.

Consider $f$ in P-good coordinates of depth $\delta$ and assume that $\mathcal{T}(f)$
 has no black box and has arrowheads decorated with $(0)$ and $(1)$. 
If $\delta>1$ there is a vertex $v_j$, $1\leq j\leq n_0$ such that there is a blue-red horizontal edge attached to $v_j$. 
We consider the corresponding Newton map $\sigma$.
Consider the polynomials $(f_z)_{\sigma}(\bdy,z_1)$ and $f_{\sigma}(\bdy,z_1)$, after the
change of variables to have $f_{\sigma}(\bdy,z_1)$ in P-good coordinates. 
Using see Lemma \ref{total-transf} we have
$(f_{\sigma})_{z_1}=\bdx^{\mathbf{q}}(f_z)_{\sigma}$. 

Therefore, $\mathcal{T}(f_{\sigma}\cdot(f_z)_{\sigma})$ and 
$\mathcal{T}(f_{\sigma}\cdot(f_{\sigma})_{z_1})$ are the same except for the decoration of the top arrow. 
Since $\depth(f_{\sigma})<\delta$, then $f_{\sigma}$ and $(f_{\sigma})_{z_1}$ are separated on 
$\mathcal{T}(f_{\sigma}\cdot(f_{\sigma})_{z_1})$. The tree $\mathcal{T}(f\cdot f_z)$ is obtained by
gluing  $\mathcal{T}(f_{\sigma}\cdot(f_z)_{\sigma})$ on $v_j$. 
We can apply that to every vertex where we have a blue-red edge attached. Then $f$ and $f_z$ are separated.
\end{proof}

\bigskip

This description allows us to study how  $f$ and its derivative $f_z$
separate on the Newton tree of $f$.

\begin{definition}
The edges at the bottom 
of the polyhedron ending with an arrow-head of multiplicity~$(0)$ 
(dead ends) are called {\it leaves}, and the vertices at the other end of the arrow-head \emph{leaf vertices}.
\end{definition}

\begin{theorem}\label{thm-ends}
Let  $f\in \bk[[\bdx]][z]$ be a polynomial   as in Notation{\rm~\ref{inicial3}} in a P-good system of coordinates  and
we consider the Newton tree
$\mathcal{T}(f)$ whose first vertical line 
has  $v_1,\ldots,v_s$ as  vertices.

\begin{enumerate}
\enet{\rm(D\arabic{enumi})}
\item\label{D1} The polar $f_z$ doesn't separate on the edges of the Newton tree of 
$f$ which are not leaves of $f$.
\item\label{D2} On each vertex $v_\ell$ of the Newton tree of $f$, different from a leaf vertex, the polar 
$f_z$ separates
with  order equal to $k^\ell p_{\ell}$, see~\eqref{eq-newton-fz}; recall $k^\ell +2$ 
is the valency of the vertex and $p_\ell$ the decoration under the vertex.
\item\label{D3} The polar  $f_z$ 
separates on a leaf vertex or on a leaf with total order
$\frac{k^s-1}{p_s}+p_s-1$
\end{enumerate}
\end{theorem}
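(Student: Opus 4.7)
The plan is to read off each of the three statements from the explicit initial forms of $f_z$ on the faces of $\mathcal{N}(f_z)$ parallel to faces of $\mathcal{N}(f)$, which were computed in the two preceding lemmas. For a non-bottom face $\Gamma_\ell$, the formula for $(f_z)_{\Gamma'_\ell}$ shows that it shares with $f_{\Gamma_\ell}$ the factor $\prod_j(z^{p_\ell}-\mu_j^\ell\bdx^{\mathbf{q}^\ell})^{m_j^\ell-1}$ and carries an extra polynomial factor
\[
R_\ell = \sum_{i=1}^{k^\ell}\bigl((n^\ell_{d+1}+p_\ell m_i^\ell)z^{p_\ell}-n^\ell_{d+1}\mu_i^\ell\bdx^{\mathbf{q}^\ell}\bigr)\prod_{j\ne i}(z^{p_\ell}-\mu_j^\ell\bdx^{\mathbf{q}^\ell})
\]
of $z$-degree $k^\ell p_\ell$. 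The first thing I would verify is that $R_\ell$ is coprime with $\prod_j(z^{p_\ell}-\mu_j^\ell\bdx^{\mathbf{q}^\ell})$: substituting $z^{p_\ell}=\mu_k^\ell\bdx^{\mathbf{q}^\ell}$ only the $i=k$ summand survives and evaluates to $p_\ell m_k^\ell\mu_k^\ell\prod_{j\ne k}(\mu_k^\ell-\mu_j^\ell)\bdx^{(k^\ell-1)\mathbf{q}^\ell}\ne 0$. This coprimality is the engine of the whole proof.

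For (D1) and (D2), I would then apply Definition \ref{separated-vertex} at each non-leaf vertex $v_\ell$. The coprimality gives $\gcd(\hat f_{\Gamma_\ell},\hat{(f_z)}_{\Gamma'_\ell})=\prod_j(z^{p_\ell}-\mu_j^\ell\bdx^{\mathbf{q}^\ell})^{m_j^\ell-1}$, so $\hat{(f_z)}_{\Gamma'_\ell}/\gcd=R_\ell$ has $z$-degree exactly $k^\ell p_\ell$, which is precisely the order of separation asserted in (D2). For (D1), the observation is that the extra factor $R_\ell$ lives \emph{only} at the vertex $v_\ell$ itself: along the non-leaf segments of $\mathcal{T}(f)$ between two successive vertices (vertical or horizontal edges), both initial polynomials reduce to multiples of the common factor $\prod_j(z^{p_\ell}-\mu_j^\ell\bdx^{\mathbf{q}^\ell})^{m_j^\ell-1}$, so the bi-coloured tree cannot branch the $f_z$-colour off the $f$-colour except at a vertex.

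For (D3) I would treat the leaf vertex $v_s$ separately using the second lemma: here the face hits $\{z=0\}$, so $n^s_{d+1}=0$ and the formula for $(f_z)_{\Gamma'_s}$ simplifies to a shared factor $\prod_j(z^{p_s}-\mu_j^s\bdx^{\mathbf{q}^s})^{m_j^s-1}$ times $z^{p_s-1}\sum_i m_i^s\prod_{j\ne i}(z^{p_s}-\mu_j^s\bdx^{\mathbf{q}^s})$, of total ``new'' contribution $(k^s-1)p_s$ at the vertex and $p_s-1$ on the leaf. In the exceptional subcase $k^s=1$, $m^s_1=1$, no parallel face of $\mathcal{N}(f_z)$ exists at all and the separation is concentrated on the leaf, coming entirely from $z^{p_s-1}$. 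Putting together the contributions attached to the leaf vertex $v_s$ and to the leaf itself gives the claimed total order $\frac{k^s-1}{p_s}+p_s-1$.

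The main obstacle will be the bookkeeping at the bottom face, where the subcases must be checked according to whether $k^s=1$ or $m^s_1=1$ and whether a parallel face of $\mathcal{N}(f_z)$ exists; in particular, one must distribute the ``new'' $z$-degree correctly between the leaf vertex $v_s$ and the leaf so that the sum matches the formula in (D3), and verify that the coprimality step continues to work once $n^s_{d+1}=0$ (where the vanishing of one of the two summands in each bracketed factor of $R_\ell$ must be handled carefully).
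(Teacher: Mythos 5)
Your approach is essentially the paper's: the paper's stated proof is ``It follows from the previous discussion,'' and that discussion consists precisely of the three lemmas you invoke (existence and explicit form of the face $\Gamma'_\ell$ of $\mathcal{N}(f_z)$ parallel to $\Gamma_\ell$, plus the lemma that along non-leaf edges $\mathcal{N}(f_z)$ has no new face). Your coprimality check of $R_\ell$ against $\prod_j(z^{p_\ell}-\mu_j^\ell\bdx^{\mathbf{q}^\ell})$ is the unstated hinge of the paper's argument and is correct, though a small bookkeeping slip: the surviving $i=k$ summand is $p_\ell m_k^\ell\mu_k^\ell\prod_{j\ne k}(\mu_k^\ell-\mu_j^\ell)\,\bdx^{k^\ell\mathbf{q}^\ell}$, with exponent $k^\ell\mathbf{q}^\ell$ (one $\bdx^{\mathbf{q}^\ell}$ from the bracket plus $k^\ell-1$ from the product), not $(k^\ell-1)\mathbf{q}^\ell$; the nonvanishing conclusion is unaffected.

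There is, however, a genuine non-sequitur in your treatment of \ref{D3}. You correctly extract from \eqref{eq-newton-fz} a vertex contribution of $z$-degree $(k^s-1)p_s$ and a leaf contribution of order $p_s-1$, and then assert that ``putting together the contributions \dots gives the claimed total order $\frac{k^s-1}{p_s}+p_s-1$.'' But $(k^s-1)p_s+(p_s-1)=k^sp_s-1$, which is not $\frac{k^s-1}{p_s}+p_s-1$ unless $k^s=1$ or $p_s=1$ (and the printed expression is not even an integer in general when $k^s>1$). You cannot close the proof of \ref{D3} simply by reading the target formula off the statement: either you must explain in what normalized sense the contribution at the leaf vertex is $\frac{k^s-1}{p_s}$ rather than $(k^s-1)p_s$ (nothing in your argument, nor in Definition~\ref{separated-vertex}, supports such a $1/p_s$ normalization), or you must flag that the formula as stated appears to be in error and that your computation in fact gives $(k^s-1)p_s+p_s-1$. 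As written, that final step simply does not follow from what precedes it.
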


\begin{proof}
It follows from the previous discussion.
\end{proof}

\begin{remark}
We don't know
anything on $f_z$ after its separation with $f$. In particular in general the derivative is
not quasi-ordinary. In the case $d=1$ this gives L\^e-Michel-Weber Theorem~\cite{lmw:89}.
\end{remark}

We can compute the discriminant of $f$ from its Newton tree. The following
result follows from a recursive application of \cite[Lemmas~5.7,~5.10]{aclm:09}, Proposition~\ref{prop-sep} and Theorem~\ref{thm-ends}.
 
\begin{proposition}\label{discrimant-tree}
Let $f$ be a quasi-ordinary Weierstrass polynomial of degree~$n$. 
Its discriminant can be computed from the polyhedron of $f$ by
$$
\Delta_z(f):=\res_z\left(f,f_z\right)(\bdx)=\bdx^{\mathbf{D}} u(\bdx), \textit{ with }   u(0)\ne 0,
$$
where 
$$
\mathbf{D}:=(D_1,\ldots,D_d)=\sum _v (\delta _v -2)\mathbf{N}'_v  -\sum _{v\text{\rm\ leaf vertex}}\frac{\mathbf{N}'_{v}}{p_{v}},
$$
where
$\delta_v$ is the valency of the vertex~$v$, 
and $\mathbf{N}'_v=\mathbf{\rho}*\mathbf{N}_v$ where $\mathbf{\rho}:=\prod_{w\text{\rm\ before }v}\mathbf{c}_w$ 
(recall that $\mathbf{c}_w$ is the $\gcd$'s of the vertex $w$).
\end{proposition}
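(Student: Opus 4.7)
My plan is to combine Theorem~\ref{main2} and Theorem~\ref{thm-ends} with the factorization theorems and Proposition~\ref{prop-sep} in order to compute the discriminant piece by piece along the Newton tree. By Theorem~\ref{main2}, I may assume that $f$ is in P-good coordinates and that $\mathcal{T}(f)$ has only arrow-head ends, decorated with $(0)$ or $(1)$. The proof of that theorem already shows that $f$ and $f_z$ are separated, so by Theorem~\ref{thm-sepcom} the discriminant $\res_z(f,f_z)$ equals $\bdx^{\mathbf{D}} u(\bdx)$ with $u(\textbf{0})\ne 0$, and only the exponent vector $\mathbf{D}$ remains to be identified.

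The first step is to decompose $f$ and $f_z$ into pieces indexed by the ends of the bi-coloured tree $\mathcal{T}(ff_z)$. Iterated application of Factorization Theorem~\ref{teorema1} and Corollary~\ref{cor-factor-gamma} provides $f=\prod_a f_a$, where each factor $f_a$ corresponds to one blue arrow of $\mathcal{T}(f)$. The explicit formulas for the face polynomials $(f_z)_{\Gamma'_\ell}$ displayed just before Theorem~\ref{thm-ends}, together with the same factorization scheme applied to the successive total transforms of $f_z$, yield a decomposition $f_z=\bdx^{\bda}z^{c}\prod_b h_b$, where each $h_b$ has a single red end in $\mathcal{T}(ff_z)$; by Theorem~\ref{thm-ends} these ends sit exactly at the non-leaf vertices $v$ (with $\delta_v-2$ emerging from $v$) and at the leaf vertices as prescribed by~\ref{D3}. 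Multiplicativity of the resultant then gives
$$
\res_z(f,f_z) = (\text{unit}) \cdot \bdx^{\mathbf{e}}\prod_{a,b}\res_z(f_a,h_b),
$$
where $\bdx^{\mathbf{e}}$ accounts for the factors $\bdx^{\bda}z^{c}$ pulled out of $f_z$, and each individual resultant $\res_z(f_a,h_b)$ is a monomial times a unit that can be read off the bi-coloured tree via Proposition~\ref{prop-sep}.

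The second step is to reorganize the sum over pairs $(a,b)$ into a sum over vertices. For a fixed pair, Proposition~\ref{prop-sep} expresses the $i$-th coordinate of the exponent of $\res_z(f_a,h_b)$ as a product of decorations along the path joining the two ends, multiplied by the $i$-th coordinate of the gcd at the vertex preceding the separation vertex. Grouping pairs by their separation vertex $v$ and using Lemma~\ref{lema-newton-tree-Q} to collapse products of local decorations into the global numerical data $\mathbf{N}'_v=\rho_v*\mathbf{N}_v$, the joint contribution of the pairs separating at a non-leaf vertex $v$ assembles into $(\delta_v-2)\mathbf{N}'_v$: the factor $\delta_v-2$ counts the number of red ends emerging from $v$, and $\mathbf{N}'_v$ encodes the product of path decorations from the root to $v$.

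The main obstacle I anticipate is the leaf vertex correction, where the derivative no longer produces a separating branch along the vertical line but only the residual term described by~\ref{D3} of Theorem~\ref{thm-ends}. The recursive evaluation of this residual through \cite[Lemmas~5.7,~5.10]{aclm:09}, translated back into the global data $\mathbf{N}'_v$ via Lemma~\ref{lema-newton-tree-Q}, should produce exactly the correction $-\mathbf{N}'_v/p_v$ at each leaf vertex, giving
$$
\mathbf{D} = \sum_v (\delta_v-2)\mathbf{N}'_v - \sum_{v\text{ leaf vertex}}\frac{\mathbf{N}'_v}{p_v}
$$
as claimed. The delicate bookkeeping throughout is to verify that the local path contributions of Proposition~\ref{prop-sep} compose compatibly with the global data, a consistency that is guaranteed by the growth relations of Remark~\ref{rem-growth}.
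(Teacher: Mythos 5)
Your proposal follows the same recursive strategy that the paper condenses into its one-line citation of \cite[Lemmas~5.7,~5.10]{aclm:09}, Proposition~\ref{prop-sep} and Theorem~\ref{thm-ends}: establish separation of $f$ and $f_z$ (via Theorem~\ref{main2}/Lemma~\ref{lema-ends}), decompose into single-end pieces, compute each partial resultant from the path products of Proposition~\ref{prop-sep}, and regroup the contributions vertex by vertex using the separation orders from Theorem~\ref{thm-ends} and the global-data translation of Lemma~\ref{lema-newton-tree-Q}. You correctly flag the leaf-vertex term as the delicate bookkeeping point and, like the paper, defer its verification to the cited lemmas rather than working it out explicitly, so your account is at the same level of rigor as the original and uses the same ingredients.
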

 
This can be seen as a generalisation of Kouchnirenko theorem~\cite{kus,cn:96} and
as a generalization of \cite[Corollaire 5]{pgp:00} where  P.~Gonz{\'a}lez-P{\'e}rez 
gave information about the Newton polyhedron of the
resultant of two quasi-ordinary hypersurfaces satisfying an
appropriate non-degeneracy  condition.

\section{Transversal sections}\label{sec-trans-sect}

Consider a quasi-ordinary polynomial $f\in \bk[[\bdx]][z]$. Fix $i\in \{1,\cdots , d\}$. We follow this convention: if $\mathbf{p}$ is
a $d$-tuple, then $\mathbf{p}^i$ is the $(d-1)$-tuple $(p_1,\dots,p_{i-1},p_{i+1},\dots,p_d)$.
Let $K_i$ be an algebraic closure of $K((x_i))$. 
The $i$-\emph{transversal section} of $h$ is the quasi-ordinary polynomial
$h^i\in K_i[[\bdx^i]] [z]$ obtained from $h$ considering $x_i$ as a generic constant.
For $I\subset \{1,\cdots,d\}$ we denote $I':=\{1,\cdots,d\}\setminus I$; 
we can define recursively the $I$-\emph{transversal section} $h^I$ and if 
$I'$ has one element, say $j$, we call it \emph{$j$-curve transversal section} $h^{I_j}$.

Consider $f$ a quasi-ordinary polynomial in suitable coordinates such that its Newton tree has no black box. We first study the Newton trees of the transversal sections in the same system of coordinates. 

Then we assume the Newton tree of $f$ has only one arrow-head with positive multiplicity.
We show that we can retrieve its 
decorated Newton tree from  the Newton tree of its curve 
transversal sections in the same system of coordinates. 

We give examples where this is no 
more true when the Newton tree of $f$ has more than one arrow-head with positive multiplicity. Nevertheless  we can prove
 that we can retrieve the global decorations of the vertices of~$f$ from the decorations 
of the curve transversal sections. This result was a crucial ingredient
in the proof of the monodromy conjecture for quasi-ordinary singularities in arbitrary dimension~\cite{aclm:05}.

Let $f$ be a quasi-ordinary polynomial in suitable coordinates such that the Newton tree $\mathcal{T}(f)$ 
has no black box. We are going to describe how we can find the tree of the $i$-transversal section $f^i$ 
in the same coordinates.

\subsection{Construction of $\mathcal{T}_{\mathcal{N}}(f^i)$.}
\mbox{}

The general procedure is that we can copy  $\mathcal{T}_{\mathcal{N}}(f)$ 
and erase the $i^{\text{th}}$-component of the local numerical data except in $3$ cases
which we develop below.

\begin{caso}\label{caso1}

Along  $\mathcal{T}_{\mathcal{N}}(f)$, there is a sequence of consecutive vertices where 
the local numerical data coincide after erasing the $i^{\text{th}}$-coordinate (and dividing
by the gcd).
This means that the two faces corresponding to these vertices project on the same face. 
Then the chain of vertices \emph{project}
on  $\mathcal{T}_{\mathcal{N}}(f^i)$ on the same vertex decorated with the common normalized numerical data.
\end{caso}

\begin{caso}\label{caso2}  The local numerical data at $v$ on  $\mathcal{T}_{\mathcal{N}}(f)$  satisfy
$\gcd (\mathbf{q}^i,p)=:d^i$, 
with $d^i>1$, and let $q'_j:=\frac{q_j}{d^i},  j \in I'$, and $p':=\frac{p}{d^i}$. 

Assume that the face polynomial at $v$ is 
$$
\prod _j (z^p -\mu _j\mathbf{x}^{\mathbf{q}})^{m_j}.
$$
On  $\mathcal{T}_{\mathcal{N}}(f^i)$, there is a vertex $v^i$ decorated with $(\mathbf{q'},p')$ and
with face polynomial
$$
\prod _j \prod _{\{\zeta \vert \zeta^{d^i}=1\}}\left(z^{p'} -\zeta \mu _jx_i^{\frac{q_i}{d_i}}(\mathbf{x}^i)^{\mathbf{q'}}\right)^{m_j}.
$$
Then if $k$ horizontal edges arise from $v$, $kd^i$ arise from $v^i$.
\end{caso}

\begin{remark}
Case~\ref{caso1} and~\ref{caso2} are not exclusive. 
The number of horizontal edges at $v^i$ is obtained as the sum, along
the vertices $v$ projecting on $v^i$,
of $d^i(v)$ times the number of horizontal edges arising from $v$.
\end{remark}

\begin{example}\label{ex-sec-trans-1}
We have in Figure~\ref{fig:desQO8nn1} the tree of
$$
f(x_1,x_2,z):=(z^2-x_1^3x_2)(z^2-x_1^3x_2^4)(z^2-x_1^5x_2^6)\in K[[x_1,x_2]][z];
$$
Figures~\ref{fig:desQO8nn2} and~\ref{fig:desQO8nn3} show the trees
for $f(x_1,x_2,z)\in K_1[[x_2]][z]$ and $f(x_1,x_2,z)\in K_2[[x_1]][z]$, respectively.
This example illustrates Cases~\ref{caso1} and~\ref{caso2}. 
More explicitely, the tree for $K_2[[x_1]][z]$ illustrates Case~\ref{caso1} 
and the tree for $K_1[[x_2]][z]$ illustrates Case~\ref{caso2}.
\begin{figure}[ht]
\centering
\subfigure[{$K[[x_1,x_2]][z]$}]{
\includegraphics[scale=1]{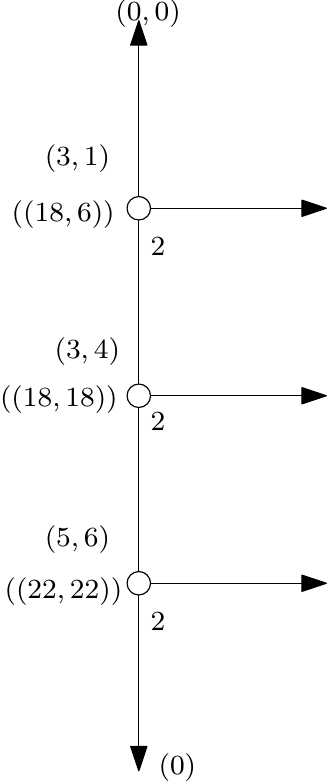}
\label{fig:desQO8nn1}
}
\hfil
\subfigure[{$K_1[[x_2]][z]$}]{
\includegraphics[scale=1]{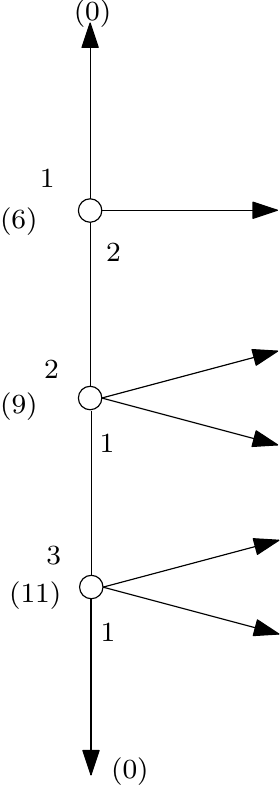}
\label{fig:desQO8nn2}
}
\hfil
\subfigure[{$K_2[[x_1]][z]$}]{
\includegraphics[scale=1]{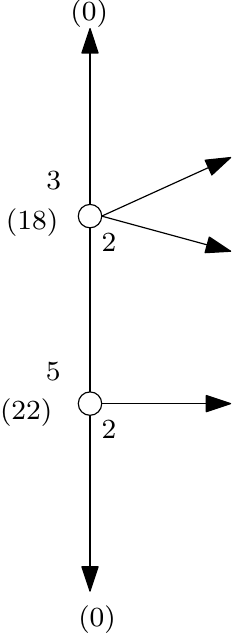}
\label{fig:desQO8nn3}
}
\caption{Newton trees of Example~\ref{ex-sec-trans-1}}
\label{desQO8nn}
\end{figure}
\end{example}

\begin{caso}\label{caso3}
We have $\mathbf{q}_\ell^i=\mathbf{0}^i$ for the first $h$ vertices of 
$\mathcal{T}_{\mathcal{N}}(f)$. 
That means that the projection of the face $\Gamma_\ell$ of $\mathcal{N}(f)$  has equation $p x_j=0$, 
$j \in I'$;
the polynomial $f_{\Gamma_\ell} (\bdx,z )$ doesn't depend on $x_j$, $j\in I'$, that is
$$
f_{\Gamma_\ell} (\bdx^i,z )=z^{k_\ell}\bdx^{\mathbf{N}}\prod_{j=1}^{r_\ell}(z^{p_\ell}-\mu_j^\ell x_i^{q_\ell})^{m_j^\ell},
$$
i.e., $f^i$ decomposes in $K_i[[\bdx^i]][z]$ in $m:=\sum_{\ell=1}^h p_\ell\sum_{j=1}^{r_\ell} m_j^\ell$
factors based at the $p_\ell$-roots $\mu_j^\ell x_i^{q_\ell}$ of $z$-degree $m_j^\ell$, and one
extra factor based at $z=0$ of degree $k_h$ (if $k_h>0$).
We proceed as follows. We keep $m$
void trees and if $k_h>0$ 
we consider also a tree for $z=0$, where we erase 
the vertices $v_1,\dots,v_h$ and we keep the upper arrow.
\end{caso}

\subsection{Construction of $\mathcal{T}(f^i)$ without decorations}
\mbox{}

Once we have obtained the vertical Newton trees we explain how to obtain
the trees $\mathcal{T}^i(f)$, when we see $f\in K_i[[\bdx^i]][z]$. 
We proceed by induction of $\depth(f)$. If the depth is one we have
$\mathcal{T}^i(f)=\mathcal{T}_{\mathcal{N}}^i(f)$ with the following exception.
If we are in Case~\ref{caso3}, we replace the empty trees by the ones
in~Figure~\ref{Q01}.

If the depth is greater than one, we start with $\mathcal{T}_{\mathcal{N}}(f^i)$.
Each arrow of this tree is related to one Newton transformation 
$\sigma:=\sigma_{\Gamma,\mu}$ and by the induction hypothesis we may
assume that $\mathcal{T}(f_\sigma^i)$ are constructed $\forall\Gamma,\mu$. 
In order to recover $\mathcal{T}(f^i)$ we proceed
as in Step~\ref{paso3} of~\S\ref{constr-trees} with the following caveats.

If we are not in Case~\ref{caso3} and $\mathcal{T}(f_\sigma^i)$ is a disjoint union
of $k_{\Gamma,\mu}$ trees, then we perform Step~\ref{paso3} for each connected component.
Hence we will obtain $\prod_{\Gamma,\mu} k_{\Gamma,\mu}$ disconnected trees.

If we are in Case~\ref{caso3}, we proceed in the same way for the tree corresponding
to~$z=0$ (if $k_h>0$). Each void tree is associated to a pair $(\Gamma_\ell,\mu_j^\ell)$,
$\ell\leq h$, and we simply take $p_\ell$ copies of the tree $\mathcal{T}(f_\sigma^i)$,
$\sigma=\sigma_{\Gamma_\ell,\mu_j^\ell}$.

\begin{remark}
In order to construct Newton maps, we followed some conventions in \S\ref{sec-newton} which may not pass
to transversal sections. It is easily seen that the results do not depend
on the particular choice of conventions. In particular, everything works if
one does not choose a tuple $\mathbf{u}$  in \S\ref{sec-newton} but a $p^{\text{th}}$-root
of $\mu_j$ instead.
\end{remark}

\subsection{Decorations of $\mathcal{T}(f^i)$.}
\mbox{}

In order to finish the construction of the Newton trees $\mathcal{T}^i(f)$ we need
to compute the decorations. In order to do it in a simpler way we define
new decorations to the Newton tree.

\begin{definition}
The \emph{prime local decorations} $\langle R_1^v,\dots,R_d^v;p^v\rangle$
of a vertex~$v$ are defined as follows. They coincide with the local decoration
if $v$ is in the first vertical tree. If not, they are computed with the following
recursive formula (as in Lemma~\ref{lema-newton-tree-Q}) where $w$ is the preceding
vertex:
$$
R_j^v:=q_j^v+\frac{p^v R_j^w p^w}{\gcd(R_j^w,p^w)^2}.
$$
\end{definition}
Note that it is possible to compute local decorations from prime local decorations
(and viceversa). Moreover, they coincide if the decorations are \emph{coprime enough},
e.g., in the curve case. The reason to define these decorations is that they behave
better when passing to transversal section since they really depend on the
quotients $\frac{q_j}{p}$ and not on the pairs $(q_j,p)$.

In order to decorate the Newton trees of the transversal sections the strategy
is as follows:

\begin{enumerate}
\enet{\rm(TSNT\arabic{enumi})}
\item In  $\mathcal{T}(f)$ compute the prime local decorations from the local decorations.
\item Pick a vertex $v^i$ in $\mathcal{T}^i(f)$, and consider the prime local decorations
of the vertex $v$ in $\mathcal{T}(f)$ which originates~$v^i$. Forget the $i^{\text{th}}$
coordinate and make the decoration coprime as in Case~\ref{caso2}.
\item Obtain the local decoration of $v^i$ from the prime local decorations.
\end{enumerate}

\subsection{Examples of transversal sections.}
\mbox{}

We illustrate the above theory with some examples.

\begin{example}\label{ex-sec-trans-2}
The tree of Figure~\ref{fig:desQO9nn1} corresponds to
$$
f(x_1,x_2,x_3,z)=(z^7-x_1^2)^2(z^3-x_1^5x_2x_3)+x_1^{10}x_2x_3\in K[[x_1,x_2,x_3]][z]
$$
and illustrates Case~\ref{caso3}.
\begin{figure}[ht]
\centering
\subfigure[{$K[[x_1,x_2,x_3]][z]$}]{
\includegraphics[scale=1]{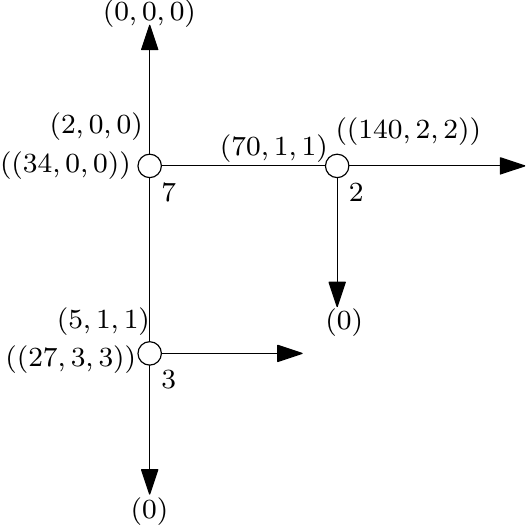}
\label{fig:desQO9nn1}
}
\hfil
\subfigure[{$f_0$}]{
\includegraphics[scale=1]{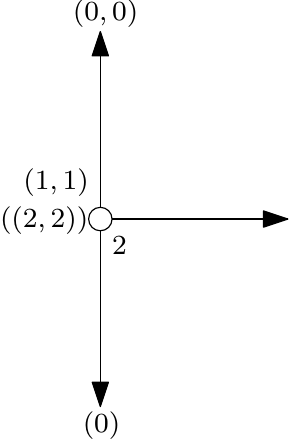}
\label{fig:desQO9nn2}
}
\hfil
\subfigure[{$f_j$}]{
\includegraphics[scale=1]{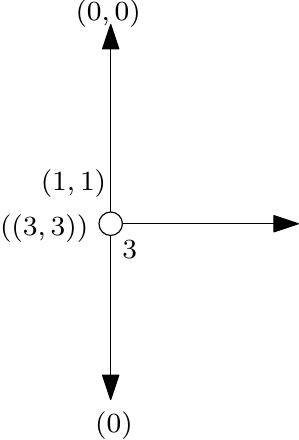}
\label{fig:desQO9nn3}
}
\caption{Newton trees of Example~\ref{ex-sec-trans-2}.}
\label{desQO9nn}
\end{figure}
If we consider $f\in K_1[[x_2,x_3]][z]$, we have $f(x_1,0,0,z)=(z^7-x_1^2)^2 z^3$, i.e,
$f=f_0\prod_{j=1}^7 f_j$ where $f_0(x_1,0,0,0)=0$ and $\{f_j(x_1,0,0,0)\}_{j=1}^7=\{b\in K_1\mid b^7=x_1^2\}$.
Hence we obtain $8$ disjoint Newton trees, one for $f_0$ and $7$ equal trees for $f_j$, $j=1,\dots,7$. 
\end{example}

These following examples illustrate Case~\ref{caso3} at the second step of the algorithm.

\begin{example}\label{ex-sec-trans-3}
Figure~\ref{fig:desQO10nn1} shows the tree of 
$$
f(x_1,x_2,z)=((z^7-x_1^2x_2^3)^2-x_1^5x_2^6)^2+x_1^{11}x_2^{13}\in K[[x_1,x_2]][z],
$$
while Figure~\ref{fig:desQO10nn2} shows the one for $f\in K_1[[x_2]][z]$.
\begin{figure}[ht]
\centering
\subfigure[{$K[[x_1,x_2]][z]$}]{
\includegraphics[scale=1]{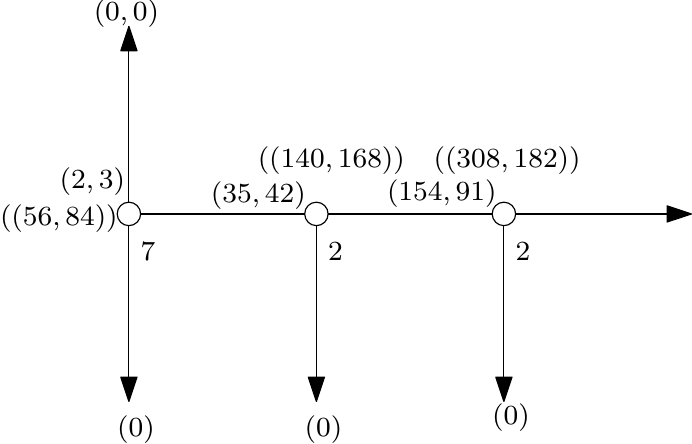}
\label{fig:desQO10nn1}
}
\hfil
\subfigure[{$K_1[[x_2]][z]$}]{
\includegraphics[scale=1]{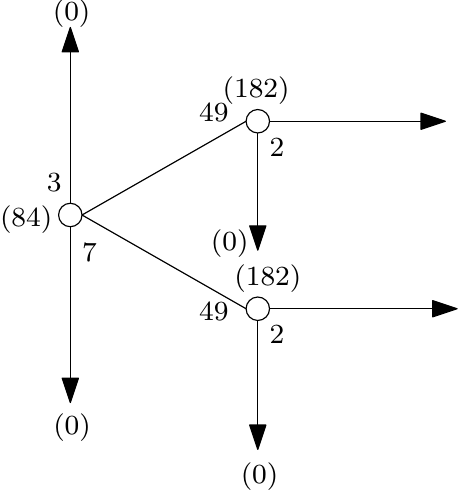}
\label{fig:desQO10nn2}
}
\caption{Newton trees of Example~\ref{ex-sec-trans-3}.}
\label{desQO10nn}
\end{figure}
\end{example} 

\begin{example}\label{ex-sec-trans-4}
Figure~\ref{fig:desQO11nn1} shows the tree of 
$$
f(x_1,x_2,z)=(z^2-x_1^2 x_2^3)^6+ (z^2-x_1^2 x_2^3)^3 x_1^7 x_2^9+x_1^{15} x_2^{19}\in K[[x_1,x_2]][z].
$$
while Figure~\ref{fig:desQO11nn2} shows the one for $f\in K_1[[x_2]][z]$.
\begin{figure}[ht]
\centering
\subfigure[{$K[[x_1,x_2]][z]$}]{
\includegraphics[scale=1]{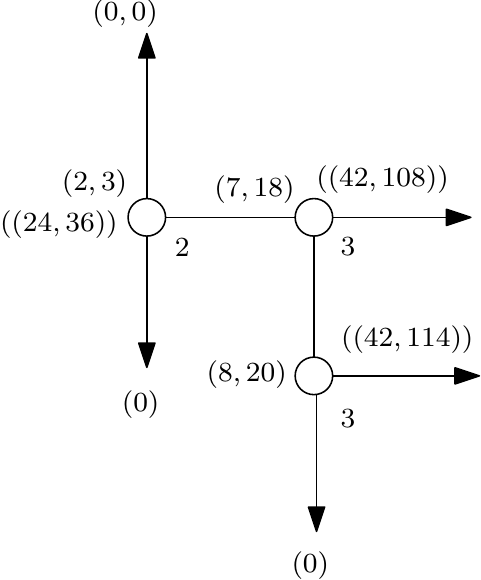}
\label{fig:desQO11nn1}
}
\hfil
\subfigure[{$K_1[[x_2]][z]$}]{
\includegraphics[scale=1]{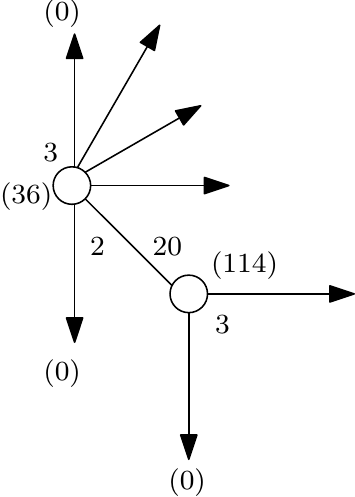}
\label{fig:desQO11nn2}
}
\caption{Newton trees of Example~\ref{ex-sec-trans-4}.}
\label{desQO11nn}
\end{figure}
\end{example}

\subsection{Curve transversal sections}
\mbox{}

We summarize the previous process showing the computations for curve transversal sections.
Now we are describing  the Newton trees of the curve transversal sections in the case where, in suitable coordinates,
the Newton tree of $f$ has only one arrow-head with positive multiplicity and no black box.
Consider the first vertex and fix $i\in\{1,\dots,d\}$. There are three cases. 

\begin{vertex1}
$q_i\neq 0 ,\ \gcd(q_i,p)=1$.
\end{vertex1}

In this case, we have a first vertex decorated with $(q_i,p)$
and with a unique horizontal edge starting from this first vertex.

\begin{vertex1}
$q_i\neq 0,\ \gcd(q_i,p)=:c_i> 1$.
\end{vertex1}

We have a first vertex decorated with 
$(q'_i,p_i)$, $q'_i:=\frac{q_i}{c_i}$, $p_i:=\frac{p_i}{c_i}$, and $c_i$ horizontal edges starting from the vertex. 

\begin{vertex1}\label{fv3}
$q_i=0$.
\end{vertex1}
We are not studying the transversal section at the origin for $z$, but at some other points. 
There are $p$ of them. In this case we have $p$ Newton trees which begin eventually with the next vertex
(as far as this situation is not repeated again).

Now we apply a Newton map $\sigma_{\Gamma,\mu}$ to go to the following vertex 
whose decorations in $\mathcal{T}_{\mathcal{N}}(f_{\Gamma,\mu})$ are
$((r_1,\dots,r_n),p^1)$. 
The local numerical data $(Q_1,\cdots,Q_d,p^1)$ 
and the prime local numerical data
$\langle R_1,\dots,R_d;p^1\rangle$ of the second vertex
satisfy 
$$
Q_i=p^1 p \frac{q_i}{c_i}+r_i\quad
R_i=p^1 p \frac{q_i}{c_i^2}+r_i.
$$
Again we have three cases.

\begin{vertex2}
$\gcd(Q_i,p^1)=1$, $\forall i$.
\end{vertex2}

For all the transversal sections with a first vertex decorated with $(q_i^t,p^t)$
we add a new vertex to each of the edges 
decorated with $(r_i+q_i^tp^tp^1,p^1)$ and we add a unique edge to this vertex. If we do not have a first vertex
(the case of First Vertex~\ref{fv3})
we begin the tree
with a first vertex decorated with $(r_i,p^1)$ 
and one horizontal edge starting from the vertex.

\begin{vertex2}
For some $i$, $\gcd(Q_i,p^1)=:c_i^1>1$ and $r_i\neq 0$  
\end{vertex2}
For the
corresponding transversal section, if  we have a first vertex decorated with 
$(q_i^t,p^t)$  we add a new vertex decorated $(r_i+p^t p^1 \frac{q_i^t}{c_i},\frac{p^1}{c_i^1})$ to all the edges and
$c_i^1$ horizontal edges starting from the vertex.  
If we do not have a first vertex we begin the tree
with a first vertex decorated with $(\frac{r_i}{c_i^1},\frac{p^1}{c_i^1})$ and we add $c_i^1$ horizontal edges. 

\begin{vertex2}
For some $i$ we have $r_i=0$. 
\end{vertex2}

If  we have a first vertex decorated with 
$(q_i^t,p^t)$, we stay on the same vertex. Then if we had already $p$ 
edges we should have $pp^1$ edges starting from the vertex.
If we don't have already any vertex then we will have $pp^1$ trees starting eventually
with the next vertex. 

The global process continue with all their corresponding  Newton maps. Note that in the curve
case prime local decorations and local decorations coincide.

\subsection{Obtaining $\mathcal{T}(f)$ from $\mathcal{T}(f^{I_i})$.}
\mbox{}

Now we will see how one can retrieve the quasi-ordinary 
singularity from the transversal sections.
This is not always possible as shown in the following examples.

\begin{example}\label{ex-duple}
The quasi-ordinary polynomial
\begin{align*}
f_1=&((z^3-x_1^2)^2+x_1^{25} x_2^{11})((z^3-x_1^4)^2+x_1^{25}x_2^5)\\
f_2=&((z^3-x_1^2)^2+x_1^{25}x_2^5)((z^3-x_1^4)^2+x_1^{25} x_2^{11}) 
\end{align*}
have the same transversal sections but they do not have the same decorated Newton tree.
which are displayed in Figure~\ref{dessinduple}.

\begin{figure}[ht]
\centering
\subfigure[{$f_1$}]{
\includegraphics[scale=1]{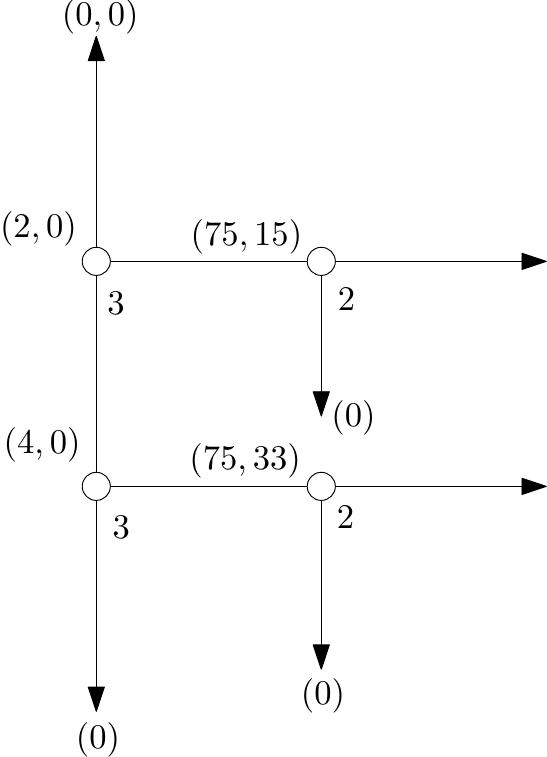}
\label{fig:dessinduple-1}
}
\hfil
\subfigure[{$f_2$}]{
\includegraphics[scale=1]{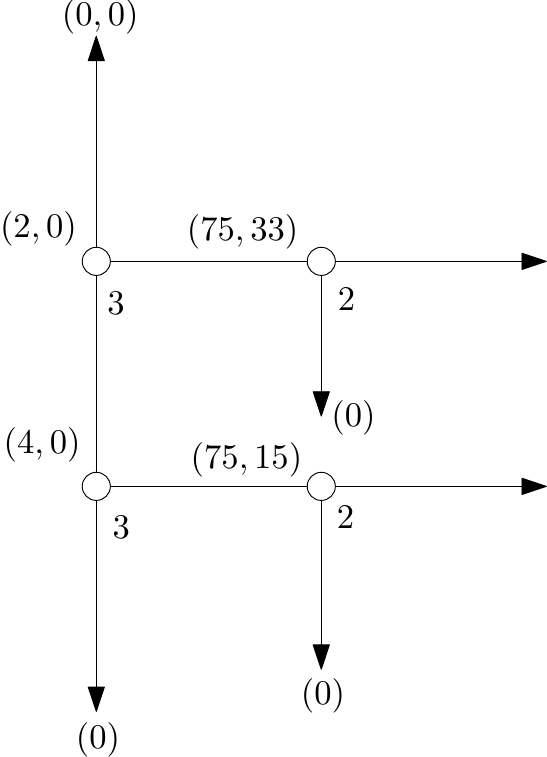}
\label{fig:dessinduple-2}
}
\caption{}
\label{dessinduple}
\end{figure}
\end{example}

\begin{example}
Let $f_n=z^n-x_1 x_2$. If $n_1\neq n_2$ then the decorated Newton tree of $f_{n_1}$ is not the same than the decorated
Newton tree of $f_{n_2}$. If we do not keep the system of coordinates,
i.e. exchanging the roles of $z$ and $x_i$ in each transversal section, we get
the same trees, see Figure~\ref{dessinduple2}, for any~$n$.

\begin{figure}[ht]
\begin{center}
\includegraphics[scale=1]{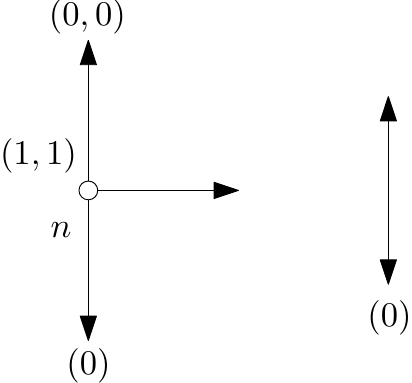}
\caption{}
\label{dessinduple2}
\end{center}
\end{figure}
\end{example}

\begin{example}\label{ex-duple1}
The transversal sections of the quasi-ordinary polynomial 
\begin{align*}
f_1=&((z^2-x_1^3 x_2)^2+x_1^5 x_2^3 z)((z^2-x_1^3 x_2^4)^2+x_1^6x_2^9 z)\\
f_2=&((z^2-x_1^3 x_2^4)^2+x_1^5 x_2^9z)((z^2-x_1^3 x_2)^2+ x_1^6 x_2^3 z) 
\end{align*}
have the same decorated Newton trees but the two germs
have different decorated Newton trees, see Figure~\ref{dessibduple1}.

\begin{figure}
\begin{center}
\includegraphics[scale=1]{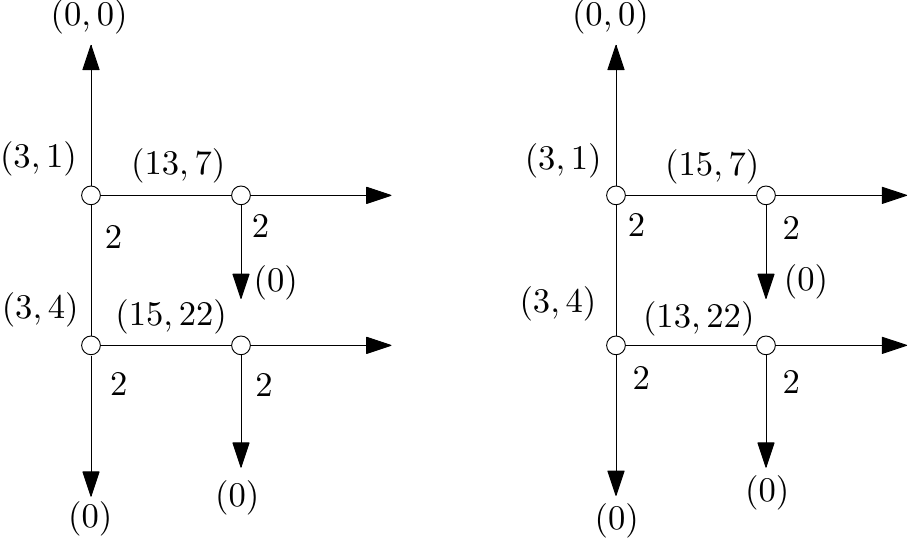}
\caption{}
\label{dessibduple1}
\end{center}
\end{figure}

\end{example}

\begin{definition}
The \emph{multiplicity} of a tree is the sum over all vertices~$v$ of the number
of horizontal edges arising from~$v$.
\end{definition}

\begin{theorem}\label{thm-recover}
Assume that $f$ is a quasi-ordinary polynomial in $\bk[[\bdx]][z]$ whose Newton tree in 
suitable coordinates has only one arrow-head with positive multiplicity and no black box.
 Then, there is a unique way to recover 
 the decorated Newton tree of $f$ from the 
 decorated Newton trees, in the same system of coordinates, of all its curve transversal sections.
 \end{theorem}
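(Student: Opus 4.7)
The plan is to proceed by induction on $\depth(f)$. The base case $\depth(f)=0$ reduces to $f$ being essentially $\bdx^{\bdn} z$ up to unit, whose tree is trivially reconstructed from any single transversal section.

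For the inductive step, suppose the decorated trees $\mathcal{T}(f^{I_1}),\dots,\mathcal{T}(f^{I_d})$ are given. First I would reconstruct the decoration $((q_1,\dots,q_d),p)$ of the first vertex $v_1$ of $\mathcal{T}(f)$. For each~$j$, the ``First Vertex'' trichotomy in \S\ref{sec-trans-sect} gives: (i) if $q_j\neq 0$, the first vertex of $\mathcal{T}(f^{I_j})$ is decorated with $(q_j/c_j,\,p/c_j)$ and has $c_j=\gcd(q_j,p)$ horizontal edges emanating from it; (ii) if $q_j=0$, the tree $\mathcal{T}(f^{I_j})$ has no first vertex and decomposes as $p$ disjoint subtrees. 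From these local data we recover $q_j$ and $p$ for each~$j$. The single-arrow hypothesis is what ensures that the value of $p$ extracted from each transversal section is consistent: the unique positive-multiplicity arrow provides a canonical marker appearing along a unique main branch in each $\mathcal{T}(f^{I_j})$, so the vertices playing the role of $v_1^j$ are unambiguously identified.

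Next, from the number and decoration of horizontal edges leaving the vertices $v_1^j$ (cf.\ Case~\ref{caso2}), I would identify the number $k^1$ of roots of the initial face polynomial $f_{\Gamma_1}$ and their multiplicities, together with the unique root $\mu$ whose subtree contains the positive-multiplicity arrow. For each remaining root $\mu'\neq\mu$, the corresponding subtree of $\mathcal{T}(f)$ attached to $v_1$ contains only dead-ends and can be read directly from the transversal trees using Step~\ref{paso3} of~\S\ref{constr-trees}. For $\mu$, apply the Newton map $\sigma=\sigma_{\Gamma_1,\mu}$; the resulting $f_\sigma$ is again quasi-ordinary with a unique positive-multiplicity arrow and strictly smaller depth. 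The curve transversal trees $\mathcal{T}(f_\sigma^{I_j})$ appear as the subtree of $\mathcal{T}(f^{I_j})$ attached to $v_1^j$ along the main branch, so they are available, and by the induction hypothesis $\mathcal{T}(f_\sigma)$ is uniquely reconstructed. Gluing this subtree to $v_1$ via a horizontal edge recovers $\mathcal{T}(f)$.

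The main obstacle is the consistent cross-matching of vertices across the $d$ transversal sections in the first step. Examples~\ref{ex-duple} and~\ref{ex-duple1} exhibit pairs of non-isomorphic decorated Newton trees producing identical lists of curve transversal sections, and the source of the ambiguity is precisely the inability to pair up subtrees from different transversal sections when several positive-multiplicity arrows are present. The single-arrow hypothesis breaks this ambiguity by providing a canonical backbone that can be tracked simultaneously through every transversal section, which is what forces the reconstruction to be unique.
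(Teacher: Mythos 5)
Your proof has the same overall architecture as the paper's: induction, reconstruction of the first vertex decoration from the transversal sections, then recursion via the Newton map. You induct on $\depth(f)$ while the paper inducts on the sum of multiplicities of the curve transversal sections; either works. Your way of recovering $p$ and $q_j$ from each $j$-transversal section via $p = (p/c_j)\cdot c_j$ and $q_j = (q_j/c_j)\cdot c_j$, where $c_j$ is read off as the number of horizontal edges at the first vertex of $\mathcal{T}(f^{I_j})$, is fine and in some ways cleaner than the paper's formula $p=\max\{p_1,\dots,p_d\}$.

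Two points are off, though. First, your discussion of "remaining roots $\mu'\neq\mu$" and subtrees "containing only dead-ends" is vacuous: the single positive-multiplicity arrow-head hypothesis, combined with Factorization Theorem~\ref{teorema1}, forces the initial face polynomial to have exactly one root, $f_{\Gamma_1}=a\bdx^{\mathbf{N}}(z^p-\mu\bdx^{\mathbf{q}})^m$ (and similarly at every vertex, so each vertex has exactly one horizontal edge in $\mathcal{T}(f)$). A subtree attached via a horizontal edge corresponds to a total transform $f_{\Gamma_1,j}$ of positive regular order $m_j\geq 1$, so it always carries a positive-multiplicity end; it cannot consist solely of dead-ends. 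The paper states this single-root form explicitly in the displayed formula of its proof.

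Second, your stated reason that the single-arrow hypothesis resolves the cross-matching ambiguity is not the correct mechanism. The vertex $v_1^j$ is simply the top vertex of $\mathcal{T}(f^{I_j})$ and needs no "marker" to be identified, and the unique positive-multiplicity arrow of $\mathcal{T}(f)$ does \emph{not} appear as a single marked branch in $\mathcal{T}(f^{I_j})$: under Case~\ref{caso2} it spawns $c_j$ (and, recursively, a whole product of) isomorphic copies. What actually makes the reconstruction canonical is precisely that, because there is only one root at each vertex, all the subtrees hanging off $v_1^j$ are isomorphic to each other, so any choice of pairing across the different $j$'s gives the same answer; this is what the paper records by "because there are all the same there is a unique way to reconstruct the tree of $f$." It is also exactly what fails in Examples~\ref{ex-duple} and~\ref{ex-duple1}: there, several non-isomorphic subtrees hang off the first vertex, and different pairings across transversal sections give different quasi-ordinary polynomials. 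Your reconstruction algorithm is still correct; it is the diagnosis of \emph{why} it is unambiguous that should be corrected.
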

 
\begin{proof}
The proof can be done by induction on the sum of the multiplicities of the curve transversal sections.
If this sum is equal to one, it means that we may assume
$f_\Gamma(\bdx,z)=\bdx^{\mathbf{N}}(z^p-x_1^q)$ and the result follows immediately.

Let us assume that the result is true when the sum of the multiplicities is less than $n$, and assume
this sum for $f$ equals $n>1$. In this case
$$
f_\Gamma(\bdx,z)=\bdx^{\mathbf{N}}(z^p-\bdx^{\mathbf{q}})^{m},
$$
for $q_1\geq\dots q_k>q_{k+1}=\dots= q_d=0$; recall that $f$ hasn only
one arrow-head with positive multiplicity and no black box. Then we must have either $m\geq 2$ or $k\geq 2$.

If $k=1$ the results follows easily by induction. Let us consider the case $k\geq 2$.
s we have seen before,
after the Newton map we get in general 
bunches of disconnected trees.
Moreover, all the trees in the bunch corresponding to the $i^{\text{th}}$-coordinate
are equal, so we need only to retain one of them and the number of trees in the bunch.

We start the construction of the tree of $f$ from the trees of the curve transversal sections.
 We consider all curve transversal sections with a complete tree. There is at least one. We consider the first vertices of these trees. We consider the decorations
$(q_i,p_i)$ of these vertices. 
Let $p=\max \{p_1,\dots,p_d\}$. We know that $p_i$ divides $p$. 
Let $p=p_i c_i$. Then the first vertex of the tree of $f$ is decorated by $((q_1 c_1,\cdots,q_k c_k,0\cdots),p)$.

We consider the Newton tree
of the transforms of the curve transversal sections
of~$f$. The sum of the multiplicities is strictly less than~$n$ and hence
we can apply induction hypothesis and all the results in this sections.

Then we can reconstruct our decorated tree from the transversal sections.
There are many branches in the transversal sections, but because there are all the same
there is a unique way to reconstruct the tree of $f$. 
\end{proof}

Now we prove that we have all the information of the global numerical data can be found 
in the trees of the transversal sections.

\begin{proposition}
For all vertex $v$ of the tree of a quasi ordinary polynomial, and all $i=1,\cdots ,d$, 
the terms $\frac{N_{i,v}}{c_i}$  can be retrieved from the decorations 
of the trees of the transversal sections, except
the order $N$ in $z$ of $f$ when the transversal section is not divisible by the variables $x_j$.
\end{proposition}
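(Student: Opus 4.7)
The plan is to induct on the depth of the Newton tree $\mathcal{T}(f)$, exploiting the explicit recipe, developed in the preceding subsection, that produces $\mathcal{T}(f^{I_j})$ from $\mathcal{T}(f)$. The key observation is that the global numerical data $N_{i,v}/c_i^v$ at a vertex $v$ is precisely the multiplicity of $y_i$ in the total transform after the chain of Newton maps that leads to~$v$ (as in the remark following Definition~\ref{gl-data}), and this multiplicity is what the curve transversal sections see.

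First I would handle the base case at the first vertex $v_1$, whose global datum $(N_1,\ldots,N_d)$ is determined by the hyperplane equations $p\,\alpha_i+q_i\,\beta=N_i$ of the unique $\nu$-proper face $\Gamma_1$. Fix $j\in\{1,\ldots,d\}$. If $q_j>0$, the projection of $\Gamma_1$ to the $(\alpha_j,\beta)$-plane satisfies $p\,\alpha_j+q_j\,\beta=N_j$, and dividing by $c_j:=\gcd(p,q_j)$ yields the primitive equation whose constant term, $N_j/c_j$, is read off directly as the global datum of the first vertex of $\mathcal{T}(f^{I_j})$ (First Vertex~1 and First Vertex~2 of the preceding subsection). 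If $q_j=0$ (First Vertex~3) the projection degenerates, but then $x_j^{N_j/p}$ appears explicitly as a monomial factor of $f^{I_j}$, so $N_j/c_j=N_j/p$ is again recovered.

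For the inductive step, let $v$ lie past the first vertical line of $\mathcal{T}(f)$ and let $\sigma$ be the Newton map linking the first vertical tree to the subtree containing~$v$. The total transform $f_\sigma$ has strictly smaller depth, so by the induction hypothesis the global data at every vertex of $\mathcal{T}(f_\sigma)$ can be read off from its curve transversal sections. The crucial commutation $(f_\sigma)^{I_j}=(f^{I_j})_{\sigma'}$, or its ramified analogue in which $\sigma'$ splits into $c_j$ parallel Newton maps when a nontrivial $\gcd$ appears (this is exactly the content of Second Vertex~1 and Second Vertex~2 of the preceding subsection), transports this information back to $\mathcal{T}(f^{I_j})$. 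Combining the recovered data along the chain of Newton maps with the recursive formula of Lemma~\ref{lema-newton-tree-Q}, multiplied through by the appropriate $\gcd$-factors to pass from local to global decorations, gives $N_{i,v}/c_i^v$.

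The main obstacle, and the only step that is not essentially combinatorial bookkeeping, is tracking the normalizations by $\gcd$'s through each of the Vertex~1--3 cases of the preceding subsection and checking that the commutation between Newton maps and curve transversal sections is compatible with them. The one quantity that cannot be extracted this way is the $z$-coordinate $n$ of the top vertex of $\mathcal{N}(f)$: projecting onto any $(\alpha_j,\beta)$-plane forgets $z$-heights unless some $x_j$ divides $f$ (in which case $n$ is visible through the monomial $\bdx^{\bdn}$ multiplying the Weierstrass factor). This accounts precisely for the exception stated in the proposition.
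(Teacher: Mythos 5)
Your argument is correct and takes essentially the same route as the paper: both rest on the observation that the global decorations are the constants in the face equations of the successive Newton diagrams, and both trace how those equations behave under projection to the transversal sections. The paper packages this as a direct case analysis (faces of $\mathcal{N}(f)$, faces from a linear change of variable, faces arising under Newton maps with the same or with more roots), while you organize the identical computation as an induction on $\depth(f)$ with the commutation of Newton maps and transversal sections as the inductive step.
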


\begin{proof}
The proof relies on the fact that the global decorations are given in terms of the equations
of the faces of the Newton diagrams.
If the vertex we are considering corresponds to a face of the  Newton diagram of $f$,  
we have the following cases:
\enumerate
\item
By projection  on $\{x_i=0\}$ the face is parallel to the $z$-axis: 
In this case, the vertex is the first vertex on
the top of the tree and its decoration is $((0,\cdots, N_i, \cdots ,0))$. The vertex will
disappear, but the decorations are $0$.

\item 
By projection the face $\Gamma$ gives a face of the Newton diagram of the transversal section. 
Either this face is different from the projections of faces intersecting $\Gamma$, and it will give
a vertex with the decoration $((N_1,\cdots ,N_d))$, 
or it is the same face as one of the projections, then the vertex will disappear, but the vertex
representing the face have the desired decorations.

Now, if we are on a Newton diagram which appeared in the Newton process. 
We have again different cases:
\begin{enumerate}
\item The Newton process is a linear change of variable for $z$. 
In this case there are two consecutive vertices 
which appear with the same $N_j$, $j\neq i$.
Only one appears in the transversal section, but its bears the decoration we are
interested in.
\item The Newton process is a Newton map for the transversal section, 
with the same roots. In this case, either we find the vertex in the transversal sections,
or it disappears because there are two faces which project on the same. In any case
the remaining vertex gives the decoration we need.
\item The Newton process is a Newton map for the transversal sections
with more roots. This case is the same as the case above but we have many copies
of the Newton diagram. Then the decoration appears many times.\qedhere
\end{enumerate}
\end{proof}

We can notice that not only we recover the decorations of the tree of $f$, but
no new decorations appear in the transversal sections.

\begin{remark}
A particular useful case of the preceeding proposition is that all the $N_i$'s can be retrieved
using the germs of curves obtained setting all variables $x_j$ except one as constant. This
is a fundamental fact in the proof of the monodromy
 conjecture for quasi-ordinary polynomial \cite{aclm:05}.
\end{remark}

 \bibliographystyle{amsplain}

\end{document}